\newtheorem{theorem}{Theorem}[section]
\newtheorem{lemma}[theorem]{Lemma}
\newtheorem{proposition}[theorem]{Proposition}
\newtheorem{definition}[theorem]{Definition}
\newtheorem*{theorem*}{Theorem}
\newtheorem*{lemma*}{Lemma}
\newtheorem*{remark*}{Remark}
\newtheorem*{definition*}{Definition}
\newtheorem*{proposition*}{Proposition}
\newtheorem*{corollary*}{Corollary}
\numberwithin{equation}{section}
\newcommand{\real}{\mathbb{R}}
\def\qed{\,\unskip\kern 6pt \penalty 500
\raise -2pt\hbox{\vrule \vbox to8pt{\hrule width 6pt
\vfill\hrule}\vrule}\par}
\definecolor{darkblue}{rgb}{0.05, .05, .65}
\definecolor{darkgreen}{rgb}{0.1, .65, .1}
\definecolor{darkred}{rgb}{0.8,0,0}
\newcommand{\beqn}{\begin{equation}}
\newcommand{\eeqn}{\end{equation}}
\newcommand{\bear}{\begin{eqnarray}}
\newcommand{\eear}{\end{eqnarray}}
\newcommand{\bean}{\begin{eqnarray*}}
\newcommand{\eean}{\end{eqnarray*}}
\begin{document}

\title{\huge \bf Separate variable blow-up patterns for a reaction-diffusion equation with critical weighted reaction}

\author{
\Large Razvan Gabriel Iagar\,\footnote{Departamento de Matem\'{a}tica
Aplicada, Ciencia e Ingenieria de los Materiales y Tecnologia
Electr\'onica, Universidad Rey Juan Carlos, M\'{o}stoles,
28933, Madrid, Spain, \textit{e-mail:} razvan.iagar@urjc.es},
\\[4pt] \Large Ariel S\'{a}nchez,\footnote{Departamento de Matem\'{a}tica
Aplicada, Ciencia e Ingenieria de los Materiales y Tecnologia
Electr\'onica, Universidad Rey Juan Carlos, M\'{o}stoles,
28933, Madrid, Spain, \textit{e-mail:} ariel.sanchez@urjc.es}\\
[4pt] }
\date{}
\maketitle

\begin{abstract}
We study the separate variable blow-up patterns associated to the following second order reaction-diffusion equation:
$$
\partial_tu=\Delta u^m + |x|^{\sigma}u^m,
$$
posed for $x\in\real^N$, $t\geq0$, where $m>1$, dimension $N\geq2$ and $\sigma>0$. A new and explicit critical exponent
$$
\sigma_c=\frac{2(m-1)(N-1)}{3m+1}
$$
is introduced and a classification of the blow-up profiles is given. The most interesting contribution of the paper is showing that existence and behavior of the blow-up patterns is split into different regimes by the critical exponent $\sigma_c$ and also depends strongly on whether the dimension $N\geq4$ or $N\in\{2,3\}$. These results extend previous works of the authors in dimension $N=1$.
\end{abstract}

\

\noindent {\bf AMS Subject Classification 2010:} 35B33, 35B40,
35K10, 35K67, 35Q79.

\smallskip

\noindent {\bf Keywords and phrases:} reaction-diffusion equations,
weighted reaction, blow-up, separate variable solutions, phase
space analysis, critical exponents

\section{Introduction}

The aim of this paper is to study the blow-up patterns (in the form of separate variable solutions) to the following reaction-diffusion equation
\begin{equation}\label{eq1}
\partial_tu=\Delta u^m+|x|^{\sigma}u^m,
\end{equation}
posed for $(x,t)\in\real^N\times(0,T)$ for some $T>0$, in dimension $N\geq2$ and with $m>1$, $\sigma>0$. It is well known (see for example the book \cite{S4} for the homogeneous case $\sigma=0$ or the recent paper by the authors \cite{IS20b} in one dimension) that finite time blow-up is expected to occur for solutions to \eqref{eq1}, which means that there exists $T\in(0,\infty)$ such that $u(t)\in L^{\infty}(\real)$ for any $t\in(0,T)$ but $u(T)\not\in L^{\infty}(\real)$. The time $T\in(0,\infty)$ satisfying this property is called the blow-up time of the solution $u$. Here and throughout the paper we employ the short notation $u(t)$ for the mapping $x\mapsto u(x,t)$ for a fixed time $t\geq0$. The blow-up profiles to Eq. \eqref{eq1} in dimension $N=1$ have been studied and classified in our previous work \cite{IS20b} but, as we show in the present paper, the ranges of existence or non-existence and the behavior of the separate variable profiles \emph{is strikingly different in higher dimensions} and some new critical parameters of the problem (both with respect to the exponent $\sigma$ and to the dimension $N$) will be introduced.

The interest for the weighted reaction-diffusion equation with unbounded, power-like weight, having the general form
\begin{equation}\label{eq.nohom.gen}
u_t=\Delta u^m+|x|^{\sigma}u^p,
\end{equation}
comes from the study of the influence that the weight has on the qualitative properties of the solutions, the dynamics of the equation and the blow-up behavior of it. The first question, addressed by mathematicians such as Bandle, Levine, Baras, Kersner et. al. was to establish criteria, depending on $m$, $p$, $\sigma$, the dimension $N$ and the initial condition $u(x,0)$, on whether the solutions to Eq. \eqref{eq.nohom.gen} blow-up in finite time or not. We quote a series of works devoted to this problem and emphasizing on the "life-span" of solutions (that is, understanding how the blow up time of a one-parameter family of solutions changes with respect to the parameter) for the semilinear case $m=1$ \cite{BL89, BK87, Pi97, Pi98}, where more general weights $a(x)$ instead of the pure power $|x|^{\sigma}$ are considered. The same problem, but for the quasilinear case $m>1$, has been addressed by Suzuki in \cite{Su02}. Suzuki established in his paper the Fujita-type exponent $p_{F}^{m,\sigma}:=m+(\sigma+2)/N$, that is, the minimal exponent $p>1$ such that, for any $1<p<p_F^{m,\sigma}$, all the solutions blow up in finite time, the name being given by similarity to the seminal paper by Fujita \cite{Fu66} where such an exponent has been introduced for the homogeneous problem. The same author also obtained sufficient conditions on the tails of the initial data $u(x,0)$ as $|x|\to\infty$ for the finite time blow-up to occur when $p>p_{F}^{m,\sigma}$. A blow-up rate of the solutions to Eq. \eqref{eq.nohom.gen} has been derived through functional estimates in \cite{AT05} in the range of exponents $m<p<m+N/2$ and $0<\sigma\leq N(p-m)/m$.

A different but very interesting question is whether the zeros of the weight (where formally there is no reaction) can be blow-up points for the solutions. Some answers to this question were given in the semilinear case $m=1$ in the series of papers \cite{GLS, GS11, GLS13, GS18} where it is shown that in general, most solutions cannot blow up at $x=0$, but the origin can be still a blow-up point in some very specific cases. The latter paper \cite{GS18} extends these results to more general weights $a(x)$ instead of pure powers $|x|^{\sigma}$ giving conditions under which the zeros of $a(x)$ cannot be blow-up points for solutions. The study of self-similar blow-up profiles to Eq. \eqref{eq.nohom.gen} for $m>1$ and weights $|x|^{\sigma}$ gave an answer to this question for the quasilinear range $m>1$ in our recent works \cite{IS19, IS21}. In these papers, we considered Eq. \eqref{eq.nohom.gen} in dimension $N=1$ and with $1\leq p<m$ and we proved that for $\sigma>0$ sufficiently close to zero (more exactly, in some interval $\sigma\in[0,\sigma^*]$) there exist blow-up profiles with simultaneous and complete blow-up, thus including $x=0$ as a blow-up point. This is a significant difference with respect to the semilinear case $m=1$ and another interesting outcome of the analysis of blow-up profiles in self-similar form.

The specific case of Eq. \eqref{eq1} proved to be very interesting to study, playing the role of a critical one between two ranges of exponents with highly different dynamics. Indeed, as indicated in \cite[Chapter 4]{S4} for the homogeneous case $\sigma=0$ and in \cite{IS19, IS21} for $\sigma>0$, the regime $1\leq p<m$ is characterized by compactly supported solutions: all the interesting profiles and general solutions remain compactly supported up to the blow-up time, and \cite{IS19} for $p=1$ and \cite{IS21} for $1<p<m$ give a classification of the blow-up profiles in backward self-similar form for Eq. \eqref{eq.nohom.gen} in dimension $N=1$. Meanwhile, the regime $p>m$ is characterized by solutions with tails as $|x|\to\infty$, as it comes out from \cite[Chapter 4]{S4} for $\sigma=0$ and from \cite{Su02} for $\sigma>0$. The dynamics of the equation should be self-similar also in this range $p>m$, as it follows from the study performed in dimension $N=1$ in \cite[Chapter 4]{S4}, but an understanding of it is still missing in higher dimensions or for any $\sigma>0$. We mention here that even for $\sigma=0$, with dimension $N$ sufficiently large and $p>m$ also larger, \emph{existence or non-existence of blow-up profiles is still an open problem}, the critical exponents in terms of $p$ and $N$ limiting the well-studied and the open ranges can be consulted in \cite{S4, GV97}. This is why, we also believe that the present work extending our study in dimension $N=1$ \cite{IS20b} to any space dimension $N\geq2$, gives us experience and a further level of understanding in a "neighbor problem" to the above-mentioned one, which will be considered in a forthcoming paper making use of the techniques and ideas we learnt and developed for the current paper. We also mention that the blow-up set of solutions to Eq. \eqref{eq1} with $\sigma=0$ has been analyzed in \cite{CDE98}.

To end this presentation of the precedents, we also mention the works \cite{FdPV06} in dimension $N=1$ and \cite{KWZ11, Liang12, FdP18} for $N\geq2$, in which a similar equation to Eq. \eqref{eq1} (that is, with $p=m$) is considered, but replacing $|x|^{\sigma}$ by a localized, compactly supported and bounded weight $a(x)$. It is there shown that the solutions may sometimes be global and present grow up instead of blow up, depending on the support of $a(x)$. For the one-dimensional case, the work \cite{FdPV06} goes into a deeper study by establishing blow up rates, sets and profiles.

We describe below the most significant contributions of the paper.

\medskip

\noindent \textbf{Main results.} We focus in this paper on the analysis and classification of solutions in separate variable form to Eq. \eqref{eq1}, having the precise form
\begin{equation}\label{Sepsol}
u(x,t)=(T-t)^{-\alpha}f(|x|), \qquad \alpha=\frac{1}{m-1},
\end{equation}
where the profiles $f$ solve the following differential equation
\begin{equation}\label{ODE}
(f^m)''(\xi)+\frac{N-1}{\xi}(f^m)'(\xi)-\frac{1}{m-1}f(\xi)+\xi^{\sigma}f^m(\xi)=0, \qquad \xi=|x|.
\end{equation}
As seen also in \cite{IS20b} in dimension $N=1$, this is the particular form of the self-similar blow-up patterns for the critical case $p=m$, where the supports of the solutions are localized and fixed. Thus, our goal is to perform a thorough study of the differential equation \eqref{ODE}, obtain a classification of the blow-up profiles with respect to their behavior and thus extract valuable knowledge on the dynamics of Eq. \eqref{eq1}, since it is by now well-established that the self-similar profiles for a reaction-diffusion equation are fundamental in understanding its general dynamics for many reasons (profiles for the geometric form of the solutions when approaching their blow-up time, examples of true solutions with specific behavior, optimizing estimates and inequalities on general solutions). As we shall see, the study in higher dimensions $N\geq2$ \emph{strikingly departs} from the one in dimension $N=1$, giving us also a better understanding of why $N=1$ and $\sigma=0$ are highly critical dimension and exponent. The explanation will become even more obvious with a change of variable that will be indicated later, but for now let us introduce a very important exponent for the whole study
\begin{equation}\label{crit.exp}
\sigma_c=\frac{2(N-1)(m-1)}{3m+1}.
\end{equation}
We notice that for $N=1$ we get $\sigma_c=0$, and this together with the subsequent analysis will show how critical is the homogeneous case in one dimension.

We will now state our main results. To this end, we define rigorously below what we understand by a "good solution".
\begin{definition}\label{def1}
We say that $f$ solution to \eqref{ODE} is a \textbf{good profile} if it fulfills one of the following three properties related to its initial behavior:

\indent (P1) $f(0)=a>0$, $f'(0)=0$.

\indent (P2) $f(0)=0$, $(f^m)'(0)=0$.

\indent (P3) There exists $\xi_0\in(0,\infty)$ such that $f(\xi_0)=0$, $(f^m)'(\xi_0)=0$ and $f(\xi)>0$ in a right-neighborhood of $\xi_0$.

A good profile $f$ is called a \textbf{good profile with interface} at some point $\eta\in(0,\infty)$ if $f(\eta)=0$, $(f^m)'(\eta)=0$, and $f(\xi)>0$ in a left-neighborhood of $\eta$.
\end{definition}
We are interested mainly in studying the good profiles with interface solutions to \eqref{ODE}. Good profiles presenting a tail as $\xi\to\infty$ instead of an interface will also be considered as of secondary interest if they exist. With respect to the good profiles with interface, their existence or non-existence and their local behavior strongly depend on the critical exponent $\sigma_c$ introduced in \eqref{crit.exp} and on whether the dimension $N>3$ or $N\leq3$. We begin with the statements of our main theorems for $N\geq4$.
\begin{theorem}[Existence of blow-up profiles in dimension $N\geq4$]\label{th.exist.N4}
Let $N\geq4$. Then good profiles with interface \emph{exist} for any $\sigma\in(0,\sigma_c)$ and \emph{do not exist} at least for any $\sigma\in[\sigma_c,2(N-3)]$.
\end{theorem}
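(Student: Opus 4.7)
The plan is to split the argument along the existence and non-existence parts of the statement, both based on converting the ODE \eqref{ODE} into an autonomous dynamical system. Setting $\eta=\log\xi$ and introducing standard phase-space variables (such as $X=\xi(f^m)'/f^m$, $Y=\xi^{2}f^{1-m}/[m(m-1)]$ and $Z=\xi^{\sigma+2}f^{m-1}$), \eqref{ODE} becomes a three-dimensional autonomous system. A preparatory step is to classify its critical points and compute the corresponding linearizations, so that each local behavior from Definition \ref{def1} (namely (P1), (P2), (P3), the interface and the tails at infinity) corresponds to a specific critical point or invariant curve. The critical threshold $\sigma_c$ should emerge naturally as the value at which the linearization at one of the relevant critical points changes its spectral type.

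For existence when $\sigma\in(0,\sigma_c)$, I would exploit the unstable manifold at the critical point corresponding to the (P1) local behavior, parametrized by $a=f(0)>0$, and perform a shooting argument in $a$. This partitions $(0,\infty)$ into disjoint open subsets according to the long-time fate of the orbit (landing on the interface invariant surface, escaping to a ``tail'' critical point, or losing monotonicity). The condition $\sigma<\sigma_c$ should guarantee that two complementary classes are both populated --- for instance by comparison with the already-studied $\sigma=0$ case and with sub-/super-solutions built from the explicit separate-variable solutions in the quasilinear heat equation --- so that connectedness of $(0,\infty)$ yields an intermediate $a^{*}$ for which the orbit strikes the interface surface, producing the desired good profile with interface.

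For non-existence when $\sigma\in[\sigma_c,2(N-3)]$, I would derive an integral identity ruling out good profiles with interface. Multiplying \eqref{ODE} by the weights $\xi^{N-1+\mu}(f^m)'$ and $\xi^{N-1+\nu}f^m$, integrating by parts on $[0,\eta]$, and using $(f^m)'(\eta)=0=f(\eta)$ together with the boundedness of $f$ at $\xi=0$ to kill the boundary contributions, one obtains identities coupling $\int\xi^{A}[(f^m)']^2\,d\xi$, $\int\xi^{A}f^{m+1}\,d\xi$ and $\int\xi^{A+\sigma}f^{2m}\,d\xi$ for suitable $A=A(N,m,\sigma,\mu,\nu)$. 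Combining them with carefully tuned $\mu,\nu$ should produce a single identity whose coefficients all share the same sign throughout $[\sigma_c,2(N-3)]$, the equality $\sigma=\sigma_c$ being precisely the value at which the coefficient linking the three (strictly positive) integrals first vanishes; the requirement $\sigma\leq 2(N-3)$ is what keeps the boundary weight integrable at $\xi=0$ and makes the identity legitimate.

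The main obstacle will be tuning the weights in the Pohozaev-type identity so that the exponent $\sigma_c = 2(N-1)(m-1)/(3m+1)$ is exactly reproduced: the algebraic balance between the three coefficients must cross zero precisely at $\sigma=\sigma_c$, which is a delicate matching. A secondary difficulty lies in the shooting argument, where one has to rigorously classify the possible asymptotic fates of orbits on the unstable manifold and establish the openness of each class in $a$, with careful handling of the porous-medium-type degeneracy both at $\xi=0$ and at the interface.
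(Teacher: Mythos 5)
Your overall phase-space strategy matches the paper's, but both halves of your proposal have substantive problems.

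\textbf{Existence.} Shooting in $a=f(0)$ from the critical point corresponding to (P1) cannot prove existence on all of $(0,\sigma_c)$, because for $\sigma$ close to $\sigma_c$ (and $N\geq4$) the paper shows, in the last step of the proof of Theorem~\ref{th.class.N4}, that \emph{no} orbit emanating from the (P1) point $Q_1$ reaches the interface point $P_1$: there the good profile with interface originates from $P_0$ and satisfies (P3), not (P1). So your shooting parameter $a$ would find no admissible $a^{*}$ in that regime, and your connectedness/dichotomy argument would simply not produce an intermediate orbit. The paper avoids this by shooting along the \emph{two-dimensional stable manifold of the interface point $P_1$} itself, parametrized by the tangency constant $D$ in $Z\sim DX^2$, and then running a three-sets argument in $D$: orbits with $D$ large come from $Q_2$, orbits with $D$ small come from the node $Q_5$/$Q_4$, both of these sets are open, and the non-empty closed complement consists of orbits coming from $P_0$, $P_2$ or $Q_1$, any of which gives a good profile. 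This reversal of the shooting direction (fix the target and vary the tangency, rather than fix the source and vary $a$) is essential and is missing from your proposal.

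\textbf{Non-existence.} Your Pohozaev-type identity is a genuinely different route from the paper's, which instead constructs an explicit quadratic barrier surface $Z(X,Y)$ in the phase space (an elliptic-paraboloid type surface through $P_0$ and $P_1$ that reduces to the explicit $K=0$ cycle on $\{X=0\}$), computes the sign of the flow on it, and shows by second/third-order Taylor approximations of the local manifolds that $P_0$, $P_2$ and $Q_1$ all sit strictly above the surface while the orbits entering $P_1$ are strictly below it, so no connection is possible when the flow on the surface is one-signed. Your proposal is plausible in outline but remains speculative: you have not exhibited weights $\mu,\nu$ for which the coupled identity is single-signed with threshold precisely at $\sigma_c=2(N-1)(m-1)/(3m+1)$, and you yourself flag this as the ``delicate matching.'' Note also that your heuristic for the bound $\sigma\leq 2(N-3)$ (integrability of a boundary weight at $\xi=0$) does not match the paper's mechanism, where that restriction comes from the sign of the cubic coefficient $2N-\sigma-6$ in the flow expression \eqref{flow.surface} on the barrier, i.e.\ it is a dynamical (not integrability) constraint. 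Finally, a small but meaningful point: $\sigma_c$ does not arise from a change of \emph{spectral type} at any critical point. At $P_3$ the linearization has eigenvalues $\{0,\pm i\sqrt{m-1}\}$ for every $\sigma$; what changes sign at $\sigma_c$ is the coefficient $K_1(\sigma)$ in the Poincar\'e normal form, a strictly nonlinear effect that also controls the sign of the flow on the barrier surface.
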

Thus, the influence of the critical exponent in \eqref{crit.exp} is very sharp: in higher space dimensions it limits the regimes of existence and non-existence of good profiles with interface. This has a significant consequence for the equation: while for $\sigma<\sigma_c$ it is expected that any solution takes a separate variable pattern among the existing ones when approaching its blow-up time, for $\sigma\geq\sigma_c$ the problem of the dynamics near blow-up remains completely open and maybe asymptotic simplifications or blow-up only at the infinity of the space and not in separate variable form have to be considered. Since our goal is not only to establish the existence of the profiles, but to understand better their behavior too, our next theorem gives a (partial) classification of them in the range where they exist.
\begin{theorem}[Classification for $N\geq4$]\label{th.class.N4}
Let again $N\geq4$. Then there exist $\sigma_0$ and $\sigma^0\in(0,\sigma_c)$ such that
\begin{itemize}
\item For any $\sigma\in(0,\sigma_0)$, the good profiles with interface to Eq. \eqref{eq1} present a behavior at $\xi=0$ given by the assumption (P1) in Definition \ref{def1}.
\item For any $\sigma\in(\sigma^0,\sigma_c)$ the good profiles with interface to Eq. \eqref{eq1} present an initial behavior corresponding to the assumption (P3) in Definition \ref{def1}
\item For $\sigma=\sigma_0$, $\sigma=\sigma^0$ the good profiles with interface to Eq. \eqref{eq1} present a behavior at $\xi=0$ given by the assumption (P2) in Definition \ref{def1}. In this case, the local behavior of the profiles can be made more precise:
\begin{equation}\label{beh.P2}
f(\xi)\sim\left[\frac{m-1}{2m(mN-N+2)}\right]^{1/(m-1)}\xi^{2/(m-1)}, \qquad {\rm as} \ \xi\to0.
\end{equation}
\end{itemize}
\end{theorem}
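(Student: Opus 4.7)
The plan is a double shooting argument in the phase plane associated to the ODE \eqref{ODE}, which I take as constructed and analyzed in previous sections of the paper. After the autonomous change of variables, the behaviors (P1), (P2), (P3) and the interface condition each correspond to distinguished critical points (or invariant manifolds) of the resulting system, and a good profile with interface is a heteroclinic connection from one of the three ``start'' points to the ``interface'' point. I would parametrize (P1)-profiles by $a = f(0) > 0$ and (P3)-profiles by $\xi_0 > 0$, and exploit standard continuous dependence of the orbits with respect to both the shooting parameter and the exponent $\sigma$.

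Next, introduce
\[
\sigma_0 := \sup\{\sigma \in (0,\sigma_c) : \text{some (P1) profile is a good profile with interface}\},
\]
\[
\sigma^0 := \inf\{\sigma \in (0,\sigma_c) : \text{some (P3) profile is a good profile with interface}\}.
\]
Non-triviality ($\sigma_0 > 0$ and $\sigma^0 < \sigma_c$) follows from the analysis near $\sigma = 0$, where the setting essentially reduces to the one-dimensional / homogeneous case in which (P1) profiles with interface are known to exist, and near $\sigma = \sigma_c$, where the (P1) critical point loses its connection to the interface point (as in the non-existence half of Theorem \ref{th.exist.N4}), forcing the interface-reaching orbits to be of (P3) type. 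Combined with a non-crossing argument between profiles sharing the same initial type, the very definitions of $\sigma_0$ and $\sigma^0$ then give that every good profile with interface for $\sigma \in (0,\sigma_0)$ is of (P1) type and for $\sigma \in (\sigma^0,\sigma_c)$ is of (P3) type.

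At each of the thresholds $\sigma = \sigma_0$ and $\sigma = \sigma^0$, a compactness / continuous-dependence argument extracts a limit orbit whose initial behavior can be neither (P1) nor (P3); it must therefore satisfy $f(0) = 0$ with $f > 0$ in a right-neighborhood of the origin, i.e.\ it is of (P2) type. The explicit constant in \eqref{beh.P2} is then recovered by plugging the ansatz $f(\xi) = C\,\xi^{2/(m-1)}$ into \eqref{ODE}. A direct calculation gives
\[
(f^m)''(\xi) + \frac{N-1}{\xi}(f^m)'(\xi) \;=\; \frac{2m(mN-N+2)}{(m-1)^2}\, C^m\, \xi^{2/(m-1)},
\]
while the weighted reaction $\xi^{\sigma} f^m$ has power $\sigma + 2m/(m-1) > 2/(m-1)$ for every $\sigma > 0$ and is hence of strictly higher order at the origin. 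Balancing with $-f(\xi)/(m-1)$ yields $C^{m-1} = (m-1)/[2m(mN-N+2)]$, precisely the constant of \eqref{beh.P2}.

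The main obstacle I foresee is the third step: showing rigorously that the limit orbit at each threshold is a genuine (P2) good profile with interface and not a degenerate object such as the zero profile, an orbit escaping in phase-plane coordinates, or one failing the interface condition. This demands a careful local analysis of the linearization at the (P2) critical point in order to single out the unique orbit emanating from it along the relevant unstable direction and to verify that this orbit indeed connects to the interface point. The restriction $N \geq 4$ enters crucially here through the signs (and reality) of the eigenvalues of that linearization: for $N \in \{2,3\}$ the topological type of the (P2) critical point degenerates (spiral versus node, for instance), and a separate analysis, presumably treated elsewhere in the paper, would be needed.
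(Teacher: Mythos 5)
Your general strategy (shooting in $\sigma$, identifying thresholds, recovering the $\xi^{2/(m-1)}$ behavior by balancing) matches the spirit of the paper, and your computation of the constant in \eqref{beh.P2} is correct. But the proposal has a genuine gap at exactly the point you flag as the ``main obstacle,'' and the paper's route is organized precisely so as to \emph{avoid} that obstacle rather than to face it head-on.

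You define $\sigma_0$ and $\sigma^0$ as the sup, resp.\ inf, of the $\sigma$-ranges where some (P1), resp.\ (P3), profile with interface exists, and then hope to extract a (P2) limit by compactness at the threshold. There are two problems. First, this only gives existence of profiles of each type, whereas the theorem asserts that \emph{all} good profiles with interface are (P1) for $\sigma<\sigma_0$ and (P3) for $\sigma>\sigma^0$; for that one needs a separate, quantitative exclusion argument for each $\sigma$, which is not supplied. Second, the compactness step can legitimately fail: a family of (P1) profiles (parametrized by $a=f(0)$ or by $\sigma$) can degenerate as $\sigma\uparrow\sigma_0$ — the initial value $a$ can escape to $0$ or $\infty$, or the interface point can escape — and the limit orbit need not select the unique trajectory leaving the point $P_2$ in the phase space (equivalently, the (P2) behavior). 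The paper sidesteps this entirely by not parametrizing profile families at all. Instead it fixes the \emph{unique} orbit leaving $P_2$ transversally to $\{Z=0\}$ (Lemma \ref{lem.P2}) and tracks its $\omega$-limit as $\sigma$ varies. The trichotomy is: that orbit ends at $P_3$, at $P_1$, or at $Q_3$ (Lemma \ref{lem.cycles} rules out limit cycles). Both $P_3$ and $Q_3$ are attractors, so the corresponding $\sigma$-sets $A$ and $C$ are open; a delicate construction with the separatrix surface \eqref{surface2} shows $(0,\sigma_0)\subseteq A$ and $(\sigma^0,\sigma_c)\subseteq C$, and the three-sets argument then delivers $\sigma$-values where the $P_2$ orbit enters $P_1$ — no compactness extraction needed. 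The ``all profiles'' assertion for $\sigma$ small is a corollary of the same compact-region argument (every orbit out of $P_0$ is trapped and forced to $P_3$, so by elimination only $Q_1$ can feed $P_1$), and for $\sigma$ near $\sigma_c$ it uses the separatrix-manifold $M$ of the orbits from $Q_1$ to exclude a $Q_1$--$P_1$ connection. None of these exclusion steps appear in your sketch, and the ``non-crossing argument between profiles sharing the same initial type'' you invoke is not an adequate substitute: different orbits entering $P_1$ on its two-dimensional stable manifold genuinely coexist, and separating them requires the explicit $\sigma$-dependent barrier surface. Your final remark about the $N\geq4$ restriction entering through the linearization at the (P2) point is also misplaced: the eigenvalues of $M(P_2)$ in Lemma \ref{lem.P2} have the stated signs for all $N\geq1$; the dimension restriction enters instead through the sign of $m(N-4)+N-2$ (equivalently, $N>N^*$), which governs the sign of the cubic term $F$ in \eqref{coef.P1} and hence which side of the surface \eqref{surface2} the manifolds of $P_0$ and $P_1$ lie on.
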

It is a strong expectation that the set where the third assumption in Theorem \ref{th.class.N4} holds true has to be a singleton, that is, $\sigma_0=\sigma^0$. Proving this uniqueness requires to establish a monotonicity in the evolution of the whole dynamical system with respect to the parameter $\sigma$ and will be left open in the current work, as a conjecture that we believe it holds true. Notice also that the homogeneous case $\sigma=0$ studied in \cite[Chapter 4]{S4} is a particular case of the first range of Theorem \ref{th.class.N4}.

Returning now to lower space dimensions $N=2$, $N=3$, we shall see that the classification of the blow-up profiles strongly departs from the corresponding results in dimension $N\geq4$ and it is more similar to the one established in \cite{IS20b} for $N=1$, although with some differences. We make it more precise below.
\begin{theorem}[Existence and classification for $N=2$ and $N=3$]\label{th.exist.N23}
Let $N=2$ or $N=3$. Then, for any $\sigma\in[0,\sigma_c]$ there exist good blow-up profiles with interface. \textbf{All} these profiles present a behavior at $\xi=0$ given by the assumption (P1) in Definition \ref{def1}, that is, $f(0)=A>0$ with $f'(0)=0$.
\end{theorem}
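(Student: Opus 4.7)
The approach I would take is to recast \eqref{ODE} as an autonomous polynomial dynamical system on a bounded phase space, via a change of variables of the type $X = \xi^\alpha/f^{m-1}$, $Y = \xi^\beta (f^m)'/f^m$ (plus a third variable encoding the reaction term $\xi^\sigma$ if needed), with $\alpha, \beta$ chosen so that all explicit $\xi$-dependence is absorbed. In this formulation each initial/terminal behavior of a profile corresponds to a critical point of the flow: points $P_1$, $P_2$, $P_3$ attached respectively to the behaviors (P1), (P2), (P3) of Definition \ref{def1}, and a critical point $Q$ whose forward approach corresponds to hitting an interface at some $\eta\in(0,\infty)$ with local expansion $f(\xi)\sim C(\eta-\xi)^{2/(m-1)}$ (the same exponent as in \eqref{beh.P2}). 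The statement then splits into two independent claims: existence of a heteroclinic connection from $P_1$ to $Q$, and absence of connections $P_2\to Q$ and $P_3\to Q$ when $N\in\{2,3\}$ and $\sigma\in[0,\sigma_c]$.

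For the existence of a (P1) profile with interface, I would run a shooting argument on the parameter $a=f(0)>0$ that indexes the one-dimensional unstable manifold at $P_1$. The case $\sigma=0$ is classical (see \cite[Chapter 4]{S4}), and by continuous dependence of the flow on parameters the connecting orbit persists for $\sigma$ near $0$. To extend existence through the full range $[0,\sigma_c]$, one shows by a topological shooting that the sets
$$A_0=\{a>0:\text{the orbit escapes to a tail critical point as }\xi\to\infty\},\qquad A_1=\{a>0:\text{the orbit leaves the admissible region }\{f\ge 0\}\}$$
are both open, and that (for $\sigma\le\sigma_c$ and $N\le 3$) they are both non-empty, so their complement $A_\eta=(0,\infty)\setminus(A_0\cup A_1)$ contains at least one parameter for which the orbit terminates at $Q$. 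The non-emptiness of $A_0$ follows from large-$a$ asymptotics; of $A_1$ from small-$a$ comparison with the pure diffusion-absorption balance.

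For the classification part, I would linearize the system at $P_2$ and $P_3$ and compute the eigenvalues explicitly as functions of $\sigma$, $m$, $N$. The critical exponent $\sigma_c=2(N-1)(m-1)/(3m+1)$ should emerge as precisely the value at which the eigenvalue governing whether the unstable direction at $P_2$ points \emph{towards} or \emph{away from} the region of phase space containing $Q$ changes sign. In the regime $N\in\{2,3\}$, $\sigma\in[0,\sigma_c]$, the geometry should be such that the unstable branch at $P_2$, and the relevant one at $P_3$ associated with the sign-condition $f>0$ on the right of $\xi_0$, emerge into a region from which any forward orbit must cross into inadmissible $\{f<0\}$ territory or diverge to a tail point at infinity; neither option is a good profile with interface. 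The dimensional cutoff $N\le3$ is exactly what forces this obstruction to hold over the entire interval $[0,\sigma_c]$, in contrast with the situation for $N\ge 4$ where the sign flips at an interior $\sigma_0<\sigma_c$, recovering Theorems \ref{th.exist.N4} and \ref{th.class.N4}.

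The main obstacle will be the \emph{global} phase-plane analysis required for the classification: one must rule out the orbits leaving $P_2$ and $P_3$ from reaching $Q$ not just locally but along their entire forward trajectory. This typically requires constructing invariant regions and identifying auxiliary monotone quantities along the flow, possibly Lyapunov-like functions adapted to the weight $\xi^\sigma$. A secondary technical point will be the handling of the endpoint $\sigma=\sigma_c$ itself, where the eigenvalue at $P_2$ vanishes and the local analysis degenerates; I would treat that case by a center-manifold computation or by a careful perturbation argument around the limiting flow at $\sigma=\sigma_c$.
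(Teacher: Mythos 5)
Your high-level scheme is the right one — convert \eqref{ODE} to an autonomous dynamical system (essentially the change of variables \eqref{PSchange}, yielding \eqref{PSsyst}), attach each of the behaviors (P1)--(P3) and the interface to critical points of the flow, and combine a shooting/three-sets argument for existence with a global barrier for the classification. That much matches the paper. But there are two substantive misidentifications in the mechanism.

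First, you predict that $\sigma_c$ emerges as the value at which an eigenvalue at the critical point attached to (P2) (the paper's $P_2$) changes sign. It does not: as in Lemma~\ref{lem.P2}, the only $\sigma$-dependent eigenvalue there is $\lambda_3=(m-1)(\sigma+2)/\sqrt{2(mN-N+2)}$, which is strictly positive for all $\sigma>0$, so no linearization at $P_2$ detects $\sigma_c$. The exponent $\sigma_c$ arises instead from (i) the coefficient $K_1(\sigma)=(3m+1)\sigma-2(m-1)(N-1)$ in the Poincar\'e normal form at the \emph{non-hyperbolic} point $P_3$ (Lemma~\ref{lem.P3}), which requires a bifurcation-theoretic analysis, not a linearization; and (ii), decisively for the present theorem, the sign of the flow $F(X)$ in \eqref{flow.surface} across the separatrix surface \eqref{surface2}. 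For $N\in\{2,3\}$ one has $2N-\sigma-6<0$ for every $\sigma>0$, which forces $F(X)>0$ for all $X$ on the whole interval $\sigma\in(0,\sigma_c]$; this is what makes the surface \eqref{surface2}, rewritten as the elliptic paraboloid \eqref{parab.ell}, an impenetrable barrier separating $P_0$ and $P_2$ from $P_1$. Your appeal to ``invariant regions and Lyapunov-like quantities'' points in the right direction, but the identity of this invariant surface (its explicit quadratic form with coefficients matching the second-order Taylor data of the two-dimensional invariant manifolds of $P_0$ and $P_1$, as in \eqref{interm19}) is exactly the non-obvious content; without producing it, the classification step has no teeth.

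Second, your plan to handle the endpoint $\sigma=\sigma_c$ by a center-manifold computation at $P_2$ is misdirected. At $\sigma=\sigma_c$ the relevant degeneracy is not at $P_2$, and the argument in the paper is instead to push the Taylor approximation of the stable manifold of $P_1$ to cubic order: the cubic coefficient $F$ in \eqref{coef.P1} has the factor $m(N-4)+N-2$, which changes sign precisely at the fractional dimension $N^*=(4m+2)/(m+1)\in(3,4)$, so for $N\le3$ the stable manifold of $P_1$ still enters from the exterior of \eqref{surface2} even when $\sigma=\sigma_c$ (whereas for $N\ge4$ it flips to the interior, and existence fails). This dimensional threshold, not a degeneracy at $P_2$, is what keeps the statement true at the endpoint. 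Your guess that ``$N\le3$ forces the obstruction over the whole interval'' is the correct phenomenon, but you attribute it to the wrong computation.
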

In fact, the range of existence of good blow-up profiles with interface can be further extended above $\sigma_c$, a thing that makes a strong difference with respect to higher space dimensions, as noticed in Theorem \ref{th.exist.N4}. In practice, numerical experiments suggest that the upper limit of this range is a very close number to $\sigma_c$, thus the interval of existence of good blow-up profiles can be extended shortly after $\sigma_c$, but for $\sigma$ larger they are expected to cease to exist. We recall here that a rather similar classification has been established in \cite{IS20b} in dimension $N=1$, starting from the critical exponent $\sigma_c=0$, and some precise estimates in terms of $m$ on the exponent $\sigma>\sigma_c$ limiting the existence and non-existence ranges have been established there. Finally, for $\sigma$ sufficiently large the profiles indeed cease to exist, and this qualitative aspect does not depend on the space dimension.
\begin{theorem}[Non-existence for $\sigma$ large]\label{th.nonexist}
In the previous notation, there exists $\sigma_1\geq\sigma_c$ sufficiently large (depending on $m$ and $N$) such that for any $\sigma>\sigma_1$ there are no good blow-up profiles.
\end{theorem}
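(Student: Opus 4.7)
My strategy is to combine two integral identities for any good profile with interface with a maximum-principle bound, forcing the parameter $\sigma$ to lie below a threshold depending only on $m$ and $N$. Suppose for contradiction that such a profile $f$ exists, with interface at some $\eta\in(0,\infty)$ and starting point $\xi_0\ge 0$ (where $\xi_0=0$ under (P1) or (P2), and $\xi_0>0$ under (P3)). In all three cases $(f^m)'(\xi_0)=(f^m)'(\eta)=0$ and $f(\eta)=0$, which makes the following integrations by parts boundary-term-free.

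First, rewriting \eqref{ODE} as $(\xi^{N-1}(f^m)')'=\xi^{N-1}[\tfrac{1}{m-1}f-\xi^\sigma f^m]$ and integrating over $[\xi_0,\eta]$ gives the \emph{balance identity}
\[
\frac{1}{m-1}\int_{\xi_0}^\eta \xi^{N-1}f\,d\xi=\int_{\xi_0}^\eta \xi^{N-1+\sigma}f^m\,d\xi.
\]
Combining this identity with the pointwise bounds $\xi^\sigma\le\eta^\sigma$ and $f^m\le M^{m-1}f$ on $[\xi_0,\eta]$, where $M=\max f$, immediately implies the lower bound $M^{m-1}\eta^\sigma\ge 1/(m-1)$. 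Next, at the point $\xi_M\in(\xi_0,\eta)$ where $f$ attains the value $M$, $(f^m)'(\xi_M)=0$ and $(f^m)''(\xi_M)\le 0$, so evaluating \eqref{ODE} at $\xi_M$ yields the maximum-principle inequality $M^{m-1}\xi_M^\sigma\ge 1/(m-1)$.

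Second, I would multiply \eqref{ODE} by $\xi^{2N-2}(f^m)'$ and integrate by parts twice; the exponent $2N-2$ is chosen precisely so that the contribution from $\int\xi^{2N-3}[(f^m)']^2$ cancels, producing the \emph{Pohozaev-type identity}
\[
\frac{2m(N-1)}{m^2-1}\int_{\xi_0}^\eta \xi^{2N-3}f^{m+1}\,d\xi=\frac{2N-2+\sigma}{2}\int_{\xi_0}^\eta \xi^{2N-3+\sigma}f^{2m}\,d\xi.
\]
Bounding the left-hand side by $M\int\xi^{2N-3}f^m\,d\xi$ and applying Cauchy-Schwarz to relate $\int\xi^{2N-3}f^m$ to $\int\xi^{2N-3+\sigma}f^{2m}$ via the intermediate integral $\int\xi^{2N-3-\sigma}\,d\xi$, then inserting the max-principle lower bound on $M^{m-1}\xi_M^\sigma$, produces an upper bound on $M$ that grows at most algebraically in $\sigma$. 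Coupled with the lower bound $M^{m-1}\eta^\sigma\ge 1/(m-1)$, this forces $\sigma$ to be bounded by some explicit $\sigma_1(m,N)$, contradicting $\sigma>\sigma_1$.

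The main obstacle will be obtaining a sharp enough lower bound on the right-hand side of the Pohozaev identity: the weight $\xi^{2N-3+\sigma}$ concentrates near $\xi=\eta$, where $f$ vanishes like $(\eta-\xi)^{2/(m-1)}$, so one must carefully exploit the concentration of $f$ around its maximum $\xi_M$, using for instance that $f\ge M/2$ on a neighborhood of $\xi_M$ whose width can be estimated from the second-order expansion of $f$ at $\xi_M$ via the ODE. A secondary technical difficulty is the split $\sigma<2N-2$ versus $\sigma\ge 2N-2$: in the latter regime the auxiliary integral $\int_0^\eta \xi^{2N-3-\sigma}\,d\xi$ used in the Cauchy-Schwarz step diverges at the origin in case (P1), and one must either restrict the inequality to $[\delta,\eta]$ with $\delta=\delta(\sigma)$ chosen to balance the losses, or argue instead by compactness: assume a sequence of profiles $f_{\sigma_n}$ exists with $\sigma_n\to\infty$, extract a limit after appropriate rescaling, and show that the limiting ODE (where the reaction term either disappears on $\xi<1$ or dominates destructively on $\xi>1$) admits no good profile with interface.
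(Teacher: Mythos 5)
Your strategy is genuinely different from the paper's: the paper proves Theorem~\ref{th.nonexist} by rescaling $U=\sigma X$, $\lambda=1/\sigma$ in the phase-space system~\eqref{PSsyst}, showing that the separatrix surface~\eqref{surface2} becomes an elliptic paraboloid, and then driving orbits geometrically past a ``point of no return'' plane $\{Y=-U_0/(m-1)\}$ that prevents them from ever reaching $P_1$. Your proposal instead works directly with the ODE~\eqref{ODE} via integral identities. The two identities you derive are correct: multiplying by $\xi^{N-1}$ and integrating gives the balance identity (the boundary terms vanish in all three cases (P1)--(P3) because $\xi^{N-1}(f^m)'\to 0$ at $\xi_0$ and $(f^m)'(\eta)=0$), and multiplying by $\xi^{2N-2}(f^m)'$ gives the Pohozaev-type identity with the stated coefficients (the choice $2N-2$ does cancel the $\int\xi^{2N-3}[(f^m)']^2$ terms). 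The max-principle bound $M^{m-1}\xi_M^\sigma\geq 1/(m-1)$ at the interior maximum is also correct.

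However, there is a genuine gap at the center of the argument, and it is exactly the one you flag yourself: the ``main obstacle.'' As written, the chain Pohozaev $\Rightarrow$ Cauchy--Schwarz $\Rightarrow$ upper bound on $M$ ``growing at most algebraically in $\sigma$'' does not go through, because none of $M$, $\eta$, $\xi_M$ is bounded a priori. The balance-identity consequence $M^{m-1}\eta^\sigma\geq 1/(m-1)$ is a one-sided constraint that the profile can always satisfy by growing or moving to larger $\xi$ as $\sigma\to\infty$; it does not by itself constrain $\sigma$. Likewise, after Cauchy--Schwarz the Pohozaev identity gives
$$\frac{\sigma+2N-2}{2}\,I^{1/2}\leq C(m,N)\,M\,J^{1/2},\qquad I=\int_{\xi_0}^\eta \xi^{2N-3+\sigma}f^{2m}\,d\xi,\ \ J=\int_{\xi_0}^\eta\xi^{2N-3-\sigma}\,d\xi,$$
in which both sides still contain the unknown scales $M$ and $\eta$ (via $J\sim\eta^{2N-2-\sigma}$ when $\sigma<2N-2$). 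To extract a contradiction one must \emph{lower-bound} $I$ in terms of $M$ and $\xi_M$ in a way that beats the factor $\eta^{(2N-2-\sigma)/2}$ hidden in $J^{1/2}$, and this is precisely the quantitative concentration estimate you leave as a sketch (``$f\geq M/2$ on a neighborhood of $\xi_M$ whose width can be estimated\dots''). That estimate is delicate: the width obtained from a second-order Taylor expansion at $\xi_M$ is governed by $(f^m)''(\xi_M)=M/(m-1)-\xi_M^\sigma M^m$, which can be arbitrarily close to zero, so the naive Taylor bound degenerates exactly when the max-principle bound is saturated. In addition, for the case $\sigma\geq 2N-2$ with $\xi_0=0$ the integral $J$ diverges, and the proposed fixes (restricting to $[\delta,\eta]$, or a compactness/rescaling argument for a sequence $\sigma_n\to\infty$) are research directions rather than arguments: the rescaling to use is not identified, precompactness is not justified, and the non-existence for the limiting ODE is not established. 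So while the framework is a plausible alternative line of attack and the identities themselves are sound, the decisive step that turns them into a non-existence proof is missing, and I do not see that it can be filled in without substantially new quantitative input.
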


\noindent \textbf{Comments on an open problem.} We stress here that both the problems with variable and unbounded coefficients (from the PDE theory perspective) and involving a study of a three-dimensional dynamical system (from the point of view of the dynamical systems approach, that is our method for the proofs) are very difficult ones. We \emph{conjecture} here that in fact for $N\geq4$, there is no good blow-up profile for any $\sigma\geq\sigma_c$, thus in this range $\sigma_1=\sigma_c$ in Theorem \ref{th.exist.N4} (while this is clearly not true for $N=2$ and $N=3$, where the existence range of $\sigma$ can be extended slightly above $\sigma_c$, as previously discussed). However, completing the intermediate range $\sigma>2(N-3)$ for $N\geq4$ would require to establish some aspects both of \emph{monotonicity with respect to $\sigma$} of the evolution of the trajectories in the subsequent phase space, and of the theory of two-dimensional unstable manifolds in the space that seem by now very difficult or unavailable, up to our knowledge. We will address this question in forthcoming works.

\medskip

\noindent \textbf{A transformation.} We give here an heuristic transformation leading to a better understanding of why dimensions $N\leq3$ and $N\geq4$ bring different results. Assume that $N$ is just a parameter in the differential equation \eqref{ODE}, allowing thus formally for real dimensions (a convention that is rather common and useful when dealing with radially symmetric or self-similar solutions). Let then
\begin{equation}\label{critN}
N^*=\frac{4m+2}{m+1}\in(3,4), \qquad \sigma_c(N^*)=\frac{2(m-1)}{m+1}
\end{equation}
and introduce the transformation
\begin{equation}\label{transf}
\xi=\left(\frac{2m}{m+1}\eta\right)^{(m+1)/2m}, \qquad f(\xi)=F(\eta)\left(\frac{2m}{m+1}\eta\right)^{-1/m}.
\end{equation}
It is easy to check that, if $f(\xi)$ is a good profile with interface to Eq. \eqref{ODE} in dimension $N^*$ and with $\sigma=\sigma_c(N^*)$, then $F(\eta)$ is a good profile with interface for the homogeneous equation Eq. \eqref{eq1} with $\sigma=0$ and dimension $N=1$. This interesting (at a formal level) self-map of Eq. \eqref{eq1} suggests that dimensions $N\in[1,N^*]$ and dimensions $N>N^*$ form different regimes with respect to the dynamics of Eq. \eqref{eq1}. Since $N^*\in(3,4)$, we notice these differences when passing from $N=3$ to $N=4$ in physical (natural) dimensions. The critical value $N^*$ will be present in various calculations throughout the paper. Let us finally state here that most of our results keep holding true if we allow the convention to consider $N$ only as a \emph{real parameter in \eqref{ODE}} instead of a physical dimension, mentioning that the results we prove for dimensions $N=2$ and $N=3$ hold true with this convention for any $N\in(1,N^*)$, while the results we prove for physical dimensions $N\geq4$ hold true for $N>N^*$.

\medskip

\noindent \textbf{Structure of the paper}. The main technique used in the proofs is converting Eq. \eqref{ODE} into an autonomous quadratic dynamical system of three first order equations and then study the associated phase space employing results and techniques typical for dynamical systems. Sometimes, direct estimates obtained from the equation will be needed too. The first two technical Sections \ref{sec.local} and \ref{sec.infty} are devoted to the local analysis of the critical points in the phase space and at the infinity of it. We stress here that the presence of \emph{periodic orbits} is a significant difficulty in the analysis. Once performed the (quite involved) local analysis of the critical points, we pass to the proof of our main results, by driving the orbits in the phase space. After devoting a short preparatory section to the analysis of an invariant plane, the existence part in Theorem \ref{th.exist.N4} is proved in Section \ref{sec.exist}. The non-existence for $\sigma\geq\sigma_c$ (at least in some interval) is postponed to the end of Section \ref{sec.class}, which is mostly devoted to the proof of the classification of the blow-up patterns for $\sigma\in(0,\sigma_c)$ and $N\geq4$, but some technical constructions used for this classification are essential also in the proof of the non-existence statement. We go back to the special case of dimensions $N=2$ and $N=3$ and prove Theorem \ref{th.exist.N23} in Section \ref{sec.N23}. The paper is finally closed by the proof of the non-existence Theorem \ref{th.nonexist} for $\sigma$ sufficiently large, which is done in Section \ref{sec.nonexist} using a geometrical construction which does not depend on the space dimension.

\section{The phase space. Local analysis}\label{sec.local}

We introduce the following change of variables in order to transform the non-autonomous equation \eqref{ODE} into an autonomous dynamical system
\begin{equation}\label{PSchange}
\begin{split}
&X(\eta)=\sqrt{m(m-1)}\xi^{-1}f^{(m-1)/2}(\xi), \ \ Y(\eta)=\frac{2\sqrt{m(m-1)}}{m-1}(f^{(m-1)/2})'(\xi), \\ &Z(\eta)=(m-1)\xi^{\sigma}f^{m-1}(\xi),
\end{split}
\end{equation}
together with the change of independent variable given by
\begin{equation}\label{change2}
\frac{d\eta}{d\xi}=\frac{1}{\sqrt{m(m-1)}}f^{-(m-1)/2}(\xi),
\end{equation}
to transform Eq. \eqref{ODE} into the following quadratic autonomous system of differential equations
\begin{equation}\label{PSsyst}
\left\{\begin{array}{ll}\dot{X}=\frac{m-1}{2}XY-X^2,\\
\dot{Y}=-\frac{m+1}{2}Y^2+1-Z-(N-1)XY,\\
\dot{Z}=Z[(m-1)Y+\sigma X],\end{array}\right.
\end{equation}
where the derivative is taken with respect to the new variable $\eta$. Let us notice here that the change of variable is the same as the one used in dimension $N=1$ \cite{IS20b}, and the system \eqref{PSsyst} is also apparently very similar, the only new term being $(N-1)XY$ in the second equation. However, as we shall see from the subsequent analysis, this single term introduces very significant changes and difficulties in the study of the phase space. The critical points in the finite part of the phase plane are the following four
\begin{equation*}
P_0=(0,h_0,0), \ P_1=(0,-h_0,0), \ P_2=(X(P_2),Y(P_2),0) \ {\rm and} \ P_3=(0,0,1),
\end{equation*}
where $h_0=\sqrt{2/(m+1)}$ and
\begin{equation}\label{xp2}
X(P_2)=\frac{m-1}{\sqrt{2(mN-N+2)}}, \qquad Y(P_2)=\sqrt{\frac{2}{mN-N+2}}.
\end{equation}
We also notice that the planes $\{X=0\}$ and $\{Z=0\}$ are invariant for the system \eqref{PSsyst}, thus we restrict the analysis to the region $\{X\geq0, Z\geq0\}$, only the component $Y$ being allowed to change sign, which is coherent with the definition of $X$, $Y$, $Z$. The section is divided into subsections corresponding to the analysis of the points and of the periodic orbits.

\subsection{Analysis of the hyperbolic critical points}\label{subsec.hyp}

The hyperbolic critical points of the system are $P_0$, $P_1$ and $P_2$ and this subsection is dedicated to their local analysis, which is rather analogous to the one in dimension $N=1$ done in \cite[Section 2]{IS20b}. The local analysis near the point $P_3$ is postponed to the next subsection.
\begin{lemma}[Analysis of the points $P_0$ and $P_1$]\label{lem.P0P1}
The system \eqref{PSsyst} in a neighborhood of the critical point $P_0$ has a two-dimensional unstable manifold and a one-dimensional stable manifold. The orbits going out of $P_0$ on the unstable manifold contain profiles such that
\begin{equation}\label{behP0}
f(\xi)\sim\left(\frac{(m-1)h_0}{2\sqrt{m(m-1)}}\xi-K\right)_+^{2/(m-1)}, \quad K>0, \ \ {\rm as} \ \xi\to\xi_0=\frac{2K\sqrt{m(m-1)}}{(m-1)h_0},
\end{equation}
that is, profiles satisfying the assumption (P3) in Definition \ref{def1}. The system in a neighborhood of the critical point $P_1$ has a one-dimensional unstable manifold and a two-dimensional stable manifold. The orbits entering $P_1$ on the stable manifold contain profiles such that
\begin{equation}\label{behP1}
f(\xi)\sim\left(K-\frac{(m-1)h_0}{2\sqrt{m(m-1)}}\xi\right)_+^{2/(m-1)}, \quad K>0, \ \ {\rm as} \ \xi\to\xi_0=\frac{2K\sqrt{m(m-1)}}{(m-1)h_0},
\end{equation}
that is, profiles with interface at a positive point $\xi=\xi_0>0$ as described in Definition \ref{def1}.
\end{lemma}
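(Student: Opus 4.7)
The plan is to combine the stable/unstable manifold theorem at $P_0$ and $P_1$ with the inverse of the change of variables \eqref{PSchange}--\eqref{change2}, which translates phase-space asymptotics into asymptotics of $f(\xi)$. Linearizing \eqref{PSsyst} at $P_0=(0,h_0,0)$ one finds that the rows associated with $\dot X$ and $\dot Z$ contribute only their diagonal entries (the coupling terms in $\dot X$ and $\dot Z$ carry a factor of $X$ or $Z$, both vanishing at $P_0$), so the Jacobian is block-triangular with spectrum
\begin{equation*}
\lambda_1=\tfrac{m-1}{2}h_0>0,\qquad \lambda_2=-(m+1)h_0<0,\qquad \lambda_3=(m-1)h_0>0.
\end{equation*}
The stable/unstable manifold theorem then gives $\dim W^u(P_0)=2$ and $\dim W^s(P_0)=1$. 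At $P_1=(0,-h_0,0)$ the Jacobian differs only by the sign of $Y$, and its eigenvalues are the negatives of the ones above, yielding $\dim W^u(P_1)=1$ and $\dim W^s(P_1)=2$.

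To recover the profile asymptotics I would follow an orbit on $W^u(P_0)$ as $\eta\to-\infty$, so that $X(\eta)\to0$, $Y(\eta)\to h_0$ and $Z(\eta)\to0$. Since $d\xi/d\eta=\xi X$ decays exponentially in this limit, $\xi(\eta)$ is monotone and converges to some $\xi_0\geq 0$. From the definition of $Y$ in \eqref{PSchange} one has
\begin{equation*}
(f^{(m-1)/2})'(\xi)=\tfrac{m-1}{2\sqrt{m(m-1)}}\,Y(\xi)\longrightarrow\tfrac{(m-1)h_0}{2\sqrt{m(m-1)}}\qquad\text{as }\xi\to\xi_0^+,
\end{equation*}
so integrating between $\xi_0$ and $\xi$ yields $f^{(m-1)/2}(\xi)\sim\tfrac{(m-1)h_0}{2\sqrt{m(m-1)}}(\xi-\xi_0)$, which raised to the power $2/(m-1)$ is exactly \eqref{behP0} with $K=\tfrac{(m-1)h_0\,\xi_0}{2\sqrt{m(m-1)}}$. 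Differentiating $f^m=(f^{(m-1)/2})^{2m/(m-1)}$ and evaluating at $\xi_0$ gives $(f^m)'(\xi_0)=0$, because the power factor vanishes while $(f^{(m-1)/2})'$ remains bounded; combined with $f>0$ immediately to the right of $\xi_0$, this is property (P3). The parallel argument at $P_1$, with $\eta\to+\infty$ along $W^s(P_1)$ so that $Y\to-h_0$, produces a decreasing $f^{(m-1)/2}$ reaching zero at some $\xi_0>0$ from the left and delivers \eqref{behP1} together with the interface condition $(f^m)'(\xi_0)=0$ in the sense of Definition \ref{def1}.

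The main technical step is to verify that the limit $\xi_0$ is strictly positive rather than zero. If $\xi_0=0$ occurred, the same integration would force $f^{(m-1)/2}(\xi)\sim\tfrac{(m-1)h_0}{2\sqrt{m(m-1)}}\xi$ near the origin, and substituting this into $X=\sqrt{m(m-1)}\,\xi^{-1}f^{(m-1)/2}$ would give $X\to\tfrac{(m-1)h_0}{2}\neq 0$, contradicting $X\to 0$ along the invariant manifold. Once this incompatibility is secured, the rest of the argument is routine.
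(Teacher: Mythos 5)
Your proof is correct and follows the same route as the paper: compute the linearization at $P_0$, $P_1$ (the Jacobian is block-triangular so the eigenvalues are the diagonal entries, agreeing with the paper's matrix), obtain $\dim W^u(P_0)=2$, $\dim W^s(P_1)=2$ from the sign pattern, and then translate $Y\to\pm h_0$ via \eqref{PSchange} into $(f^{(m-1)/2})'\to\pm(m-1)h_0/2\sqrt{m(m-1)}$, integrating to get \eqref{behP0}--\eqref{behP1}. In fact you supply the detail the paper explicitly elides (referring to \cite{IS20b}): ruling out $\xi_0=0$ by noting that $f^{(m-1)/2}\sim c\,\xi$ near $0$ forces $X\to(m-1)h_0/2\neq0$, contradicting $X\to0$. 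One minor imprecision: the remark that ``$d\xi/d\eta=\xi X$ decays exponentially'' as $\eta\to-\infty$ presupposes boundedness of $\xi$, which is what you are trying to establish; but the desired convergence of $\xi(\eta)$ already follows from the monotonicity $d\xi/d\eta=\xi X>0$ (so $\xi$ is decreasing and bounded below as $\eta\to-\infty$), making the exponential decay unnecessary. At $P_1$ you should likewise note explicitly that $\xi_0<\infty$ because a linearly decreasing $f^{(m-1)/2}$ must hit zero in finite $\xi$; this is routine and you clearly have it in mind.
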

\begin{proof}
The linearization of the system \eqref{PSsyst} in a neighborhood of the critical points $P_0$, respectively $P_1$, has the matrix
$$
M=\left(
         \begin{array}{ccc}
           \pm(m-1)h_0/2 & 0 & 0 \\
           \mp(N-1)h_0 & \mp(m+1)h_0 & -1 \\
           0 & 0 & \pm(m-1)h_0 \\
         \end{array}
       \right),$$
with eigenvalues $\lambda_1=\pm(m-1)h_0/2$, $\lambda_2=\mp(m+1)h_0$, $\lambda_3=\pm(m-1)h_0$, the plus sign corresponding to $P_0$ and the minus sign to $P_1$. Thus, $P_0$ has a two-dimensional unstable manifold, while $P_1$ has a two-dimensional stable manifold. It is rather obvious that the orbits entering $P_0$ on the one-dimensional stable manifold, or going out of $P_1$ on the one-dimensional unstable manifold (both corresponding to the eigenvalue $\lambda_2$) are contained in the $Y$ axis. The orbits going out of $P_0$ (respectively entering $P_1$) on the two-dimensional unstable manifold (respectively the two-dimensional stable manifold) contain profiles such that $X\to0$, $Y\to\pm h_0$ and $Z\to0$. We then infer from \eqref{PSchange} that on the one hand we have the local behavior
\begin{equation}\label{interm1}
(f^{(m-1)/2})'(\xi)\sim\pm\frac{(m-1)h_0}{2\sqrt{m(m-1)}},
\end{equation}
and that on the other hand \eqref{interm1} holds true as $\xi\to\xi_0\in(0,\infty)$. The latter is proved by discarding the possibilities that either $\xi\to0$ or $\xi\to\infty$ in \eqref{interm1}, which is established by translating the fact that $X(\xi)\to0$ or $Z(\xi)\to0$ in terms of profiles. The details are identical to the ones given in the proof of \cite[Lemma 2.1]{IS20b} and will be omitted here. We next find the behavior \eqref{behP0} by integration in \eqref{interm1} when working with the plus sign and the local behavior \eqref{behP1} by a similar integration when working with the minus sign in \eqref{interm1}.
\end{proof}
The local analysis of the point $P_2$ follows below.
\begin{lemma}[Analysis of the point $P_2$]\label{lem.P2}
The system in a neighborhood of the critical point $P_2$ has a two-dimensional stable manifold and a one-dimensional unstable manifold. The stable manifold is contained in the invariant plane $\{Z=0\}$. There exists a unique orbit going out of $P_2$, containing profiles with local behavior given by \eqref{beh.P2}.
\end{lemma}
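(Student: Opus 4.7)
The plan is to linearize the system \eqref{PSsyst} at $P_2$, read off the stable/unstable manifold structure from the spectrum of the Jacobian, and then translate the unstable eigendirection back into profile asymptotics through the inverse of \eqref{PSchange}. A key simplification I would exploit from the start is the identity $(m-1)\,Y(P_2)=2\,X(P_2)$, which follows directly from the explicit values \eqref{xp2}. With it, the Jacobian at $P_2$ takes the block form
\begin{equation*}
J(P_2)=\begin{pmatrix} -X(P_2) & \frac{m-1}{2}X(P_2) & 0 \\ -(N-1)Y(P_2) & -(m+1)Y(P_2)-(N-1)X(P_2) & -1 \\ 0 & 0 & (m-1)Y(P_2)+\sigma X(P_2) \end{pmatrix}.
\end{equation*}
Since the last row is purely diagonal, the characteristic polynomial factors: one eigenvalue is $\lambda_3=(m-1)Y(P_2)+\sigma X(P_2)>0$ (the unstable direction), while the other two come from the upper $2\times 2$ block, whose trace equals $-N X(P_2)-(m+1)Y(P_2)<0$ and whose determinant, after simplification using the identity above, equals $X(P_2)\bigl[(m+1)Y(P_2)+2(N-1)X(P_2)\bigr]>0$. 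Both remaining eigenvalues therefore have strictly negative real part, yielding a two-dimensional stable manifold and a one-dimensional unstable manifold at $P_2$.

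Next, since the plane $\{Z=0\}$ is invariant under \eqref{PSsyst} and the two stable eigenvectors lie in its tangent space at $P_2$ (their $Z$-components are zero, because they come from the decoupled upper block), the stable manifold theorem places the whole stable manifold inside $\{Z=0\}$. On the other hand, the eigenvector associated to $\lambda_3$ must have a nonzero $Z$-component, for otherwise $\lambda_3$ would have to coincide with one of the negative-real-part eigenvalues of the upper block, a contradiction. Hence the one-dimensional unstable manifold is transverse to $\{Z=0\}$, and exactly one of its two branches enters the physical region $\{Z>0\}$, giving the claimed unique orbit leaving $P_2$.

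Finally, along this trajectory $(X,Y,Z)(\eta)\to P_2$ as $\eta\to-\infty$, so from \eqref{PSchange} I would read $f^{(m-1)/2}(\xi)\sim (X(P_2)/\sqrt{m(m-1)})\,\xi$; raising to the power $2/(m-1)$ and substituting the explicit value of $X(P_2)$ produces exactly \eqref{beh.P2}. To close the argument I would verify that $\eta\to-\infty$ corresponds to $\xi\to 0^+$ by inserting $f\sim C\xi^{2/(m-1)}$ into \eqref{change2}, which gives $d\eta/d\xi\sim c/\xi$, hence $\eta\sim c\log\xi\to-\infty$ as $\xi\to 0^+$. The main obstacle I foresee is keeping the Jacobian calculation clean enough for the signs of the trace and determinant of the upper block to be manifest; once the identity $(m-1)Y(P_2)=2X(P_2)$ is used systematically, everything else follows routine invariant-manifold arguments that closely parallel the analysis of $P_0$ and $P_1$ carried out in Lemma \ref{lem.P0P1}.
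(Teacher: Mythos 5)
Your proposal is correct and follows essentially the same route as the paper: linearize at $P_2$, exploit the block-triangular structure of the Jacobian (the last row being diagonal since $Z(P_2)=0$) to read off $\lambda_3>0$ and deduce from the trace and determinant of the upper $2\times2$ block that the two remaining eigenvalues have negative real part, conclude the stable manifold lies in the invariant plane $\{Z=0\}$, and translate $X\to X(P_2)$ back through \eqref{PSchange} into the profile asymptotics \eqref{beh.P2}. The only presentational difference is that you simplify the algebra up front with the identity $(m-1)Y(P_2)=2X(P_2)$, and you verify $\xi\to0^+$ by the divergence of $\eta(\xi)$ via \eqref{change2} rather than by directly contradicting the alternatives $\xi\to\xi_0\in(0,\infty)$ and $\xi\to\infty$ as the paper does; these are equivalent in substance.
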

\begin{proof}
The linearization of the system \eqref{PSsyst} near the critical point $P_2$ has the matrix
$$
M(P_2)=\left(
         \begin{array}{ccc}
           -\frac{m-1}{\sqrt{2(mN-N+2)}} & \frac{(m-1)^2}{2\sqrt{2(mN-N+2)}} & 0 \\
           -\frac{2(N-1)}{\sqrt{2(mN-N+2)}} & -\frac{2(m+1)+(N-1)(m-1)}{\sqrt{2(mN-N+2)}} & -1 \\
           0 & 0 & \frac{(m-1)(\sigma+2)}{\sqrt{2(mN-N+2)}} \\
         \end{array}
       \right),
$$
with eigenvalues $\lambda_1$, $\lambda_2$ and $\lambda_3$ satisfying
$$
\lambda_1+\lambda_2=-\frac{(m-1)N+2(m+1)}{\sqrt{2(mN-N+2)}}<0, \ \lambda_1\lambda_2=\frac{(m+1)(m-1)+(N-1)(m-1)^2}{mN-N+2}>0
$$
whence $\lambda_1<0$ and $\lambda_2<0$, and
\begin{equation}\label{lambda3}
\lambda_3=\frac{(m-1)(\sigma+2)}{\sqrt{2(mN-N+2)}}>0.
\end{equation}
It is easy to check, using the invariance of the plane $\{Z=0\}$ (similarly as in \cite[Lemma 2.3]{IS19}) that the two-dimensional stable manifold is contained in the plane $\{Z=0\}$ and there exists only one orbit going out of $P_2$ into the region $\{Z>0\}$ of the phase space, which goes out tangent to the direction of the eigenvector $e_3$ corresponding to the eigenvalue $\lambda_3$, that is
\begin{equation}\label{interm2}
e_3(\sigma)=\left(-\frac{(m-1)\sqrt{2(mN-N+2)}}{2l(\sigma)},\frac{(\sigma+3)\sqrt{2(mN-N+2)}}{l(\sigma)},1\right)
\end{equation}
with
$$
l(\sigma):=(m-1)\sigma^2+(mN+6m-N-2)\sigma+4(mN+2m-N+2).
$$
Despite its very tedious form, the precise dependence on $\sigma$ of the components of $e_3$ will be very useful in the global analysis of this orbit. Since on this unique orbit we have
$$
X(\xi)\to\frac{m-1}{\sqrt{2(mN-N+2)}}, \ Y(\xi)\to\sqrt{\frac{2}{mN-N+2}}, \ Z(\xi)\to0,
$$
we readily deduce from the formula for $X(\xi)$ in \eqref{PSchange} that the profiles contained in this unique orbit behave locally as in \eqref{beh.P2}. The fact that the behavior in \eqref{beh.P2} is taken as $\xi\to0$ follows from discarding the possibilities $\xi\to\xi_0\in(0,\infty)$ or $\xi\to\infty$. Assuming for contradiction that the former takes place and recalling that $Z(\xi)\to0$, we get that $f(\xi)\to0$ as $\xi\to\xi_0$, contradicting that $X(\xi)$ tends to a positive constant. If we assume for contradiction that the convergence in \eqref{beh.P2} takes place as $\xi\to\infty$, again $Z(\xi)\to0$ implies that $f(\xi)\to0$ as $\xi\to\infty$, then $X(\xi)\to0$ and a contradiction. We thus remain with the behavior \eqref{beh.P2} as $\xi\to0$, as stated.
\end{proof}

\subsection{Analysis of the non-hyperbolic point $P_3$}\label{subsec.P3}

This subsection is devoted to the analysis of the critical point $P_3$, which is a non-hyperbolic one, the linearization of the system \eqref{PSsyst} in a neighborhood of this point having three eigenvalues with zero real part: $\lambda_1=0$, $\lambda_2=i\sqrt{m-1}$, $\lambda_3=-i\sqrt{m-1}$. Thus, in order to analyze the system in a neighborhood of $P_3$ we have to use more involved techniques specific to the bifurcation theory, more precisely by deducing the \emph{Poincar\'e normal form} of the system using the theory and techniques presented, for example, in books such as \cite{KuBook} or \cite{Wig}. As we shall see below, this is the first instance where the critical exponent $\sigma_c$ comes decisively into play.
\begin{lemma}[Local analysis near the point $P_3$]\label{lem.P3}
For any $\sigma\in(0,\sigma_c)$, the critical point $P_3$ behaves as an attractor for the orbits coming from the half-space $\{X>0\}$ of the phase space associated to the system \eqref{PSsyst}. The orbits entering it contain profiles presenting a tail as $\xi\to\infty$, namely
\begin{equation}\label{beh.P3}
f(\xi)\sim\left(\frac{1}{m-1}\right)^{1/(m-1)}\xi^{-\sigma/(m-1)}, \qquad {\rm as} \ \xi\to\infty.
\end{equation}
For any $\sigma>\sigma_c$, the critical point $P_3$ behaves as a repeller for the orbits coming from the half-space $\{X>0\}$ of the phase space and there are no orbits either entering or going out of it.
\end{lemma}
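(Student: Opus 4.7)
The point $P_3$ is a zero-Hopf (fold-Hopf) singularity: the three eigenvalues $0$ and $\pm i\sqrt{m-1}$ all have vanishing real part, so the whole neighborhood is a center manifold and the local stability has to be read off from higher-order terms via a Poincar\'e normal form reduction in the spirit of \cite{KuBook, Wig}. The plan is to diagonalize the linear part, identify the two quadratic resonant coefficients that control a zero-Hopf normal form, and show that the critical exponent $\sigma_c$ in \eqref{crit.exp} is precisely the value at which one of them vanishes; this will determine the attractor/repeller dichotomy, and the tail \eqref{beh.P3} will then drop out of the definition $Z=(m-1)\xi^{\sigma}f^{m-1}$ together with $Z(\eta)\to1$ along any orbit entering $P_3$.

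First I translate $P_3$ to the origin with $W=Z-1$ and introduce the eigenvector-adapted coordinates
\begin{equation*}
X=(m-1)u,\qquad Y=-\sigma u+v,\qquad W=-\sqrt{m-1}\,w,
\end{equation*}
which turn the linear part into $\dot u=0$, $\dot v=\sqrt{m-1}\,w$, $\dot w=-\sqrt{m-1}\,v$, a pure rotation at frequency $\sqrt{m-1}$ on the invariant plane $\{u=0\}$ (which is nothing but $\{X=0\}$). A direct, if tedious, substitution then produces the quadratic system
\begin{equation*}
\begin{split}
\dot u&=\tfrac{m-1}{2}uv-\tfrac{(m-1)(\sigma+2)}{2}u^2,\\
\dot v&=\sqrt{m-1}\,w+\sigma\bigl[(N-2)(m-1)-m\sigma\bigr]u^2+\tfrac{3m+1}{2}(\sigma-\sigma_c)\,uv-\tfrac{m+1}{2}v^2,\\
\dot w&=-\sqrt{m-1}\,v+(m-1)\,vw,
\end{split}
\end{equation*}
in which the coefficient $\tfrac{3m+1}{2}(\sigma-\sigma_c)$ of $uv$ in $\dot v$ is the resonant quantity governing the bifurcation, and whose computation is what makes $\sigma_c$ emerge naturally from the algebra. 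The half-space of interest $\{X>0\}$ corresponds to $\{u>0\}$.

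Next I pass to polar coordinates $v=\rho\cos\theta$, $w=\rho\sin\theta$ and perform the averaging over the fast angle $\theta$ (equivalently, the first nontrivial step of the normal form reduction of a zero-Hopf singularity, see \cite{KuBook, Wig}). All odd trigonometric integrals vanish and one is left with the decoupled slow system
\begin{equation*}
\langle\dot u\rangle=-\tfrac{(m-1)(\sigma+2)}{2}u^2+O(|(u,\rho)|^3),\qquad \langle\dot\rho\rangle=\tfrac{3m+1}{4}(\sigma-\sigma_c)\,u\rho+O(|(u,\rho)|^3).
\end{equation*}
Integrating the truncated system for $u>0$ gives $u(\eta)\sim c/\eta$ and then $\rho(\eta)\sim\eta^{(3m+1)(\sigma-\sigma_c)/[2(m-1)(\sigma+2)]}$, so that $(u,\rho)\to(0,0)$ as $\eta\to+\infty$ precisely when $\sigma<\sigma_c$, while $\rho$ escapes every neighbourhood of $P_3$ when $\sigma>\sigma_c$. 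Reversing time in the $u$-equation produces a finite-time blow-up of $u$, so in the supercritical regime no orbit in $\{X>0\}$ can have $P_3$ in either its $\alpha$- or its $\omega$-limit, which is the claimed repelling character.

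Finally, the tail \eqref{beh.P3} follows directly: on any orbit entering $P_3$ we have $Z\to 1$, that is $(m-1)\xi^{\sigma}f^{m-1}(\xi)\to 1$. That this limit is attained as $\xi\to\infty$ rather than at a finite $\xi_0$ is ruled out exactly as in Lemmas \ref{lem.P0P1} and \ref{lem.P2}: a finite $\xi_0$ would give $f^{m-1}(\xi_0)=1/[(m-1)\xi_0^{\sigma}]>0$ and hence $X(\xi_0)>0$, contradicting $X\to 0$ on the orbit. The main obstacle is to justify rigorously that the averaged slow system faithfully reflects the asymptotic behaviour of the full three-dimensional flow despite the cubic remainders; this is precisely what the zero-Hopf normal form theorem in \cite{KuBook, Wig} provides, the required nondegeneracy condition being $\sigma\neq\sigma_c$, which is exactly the dichotomy in the statement.
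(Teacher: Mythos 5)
Your computations are all correct, and the argument is structurally the same as the paper's: both recognize $P_3$ as a zero-Hopf (fold-Hopf) singularity, extract the reduced slow system consisting of a quadratic equation in the central variable and an equation for the radial variable whose resonant coefficient is proportional to $\sigma-\sigma_c$, and read off the attractor/repeller dichotomy. The difference is purely one of technique in obtaining the reduced equations: the paper passes to the complex variable $w=v+iu$ and applies the formulas for the Poincar\'e normal form from \cite[Lemma 8.9]{KuBook}, whereas you work directly in real polar coordinates and average over the fast angle $\theta$. Averaging is more elementary and the resonant coefficient $\tfrac{3m+1}{2}(\sigma-\sigma_c)$ in $\dot v$ appears transparently; the complex normal form machinery is more systematic and more easily pushed to higher order, which is why the paper adopts it (it also keeps the coefficients $K_1,K_2$ in a form reused later in the paper, e.g.\ in the transformed equation \eqref{ODE2}). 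Your coordinates are just a rescaling of the paper's ($u=z_{\text{paper}}/(m-1)$, etc.), so the two reduced systems coincide up to constants, and your explicit integration $u\sim c/\eta$, $\rho\sim\eta^{(3m+1)(\sigma-\sigma_c)/[2(m-1)(\sigma+2)]}$ is consistent with the paper's $z\sim Cr^{K_3(\sigma)}$.

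The one thing you leave out is a brief treatment of the exceptional orbits that the normal form analysis cannot see: the paper checks separately the orbits in the plane $\{r=0\}$, which live on the line $Z=1$, $(m-1)Y+\sigma X=0$ and correspond to the hyperbola \eqref{hyp}, not a solution of \eqref{ODE}; and the degenerate $C=0$ orbit tangent to $\{X=0\}$, which is ruled out using the identity $Z/X^2=\xi^{\sigma+2}/m\to\infty$. Your averaging argument is genuinely asymptotic in $\rho>0$ and $u>0$ and does not automatically cover these boundary orbits, so the claim that for $\sigma>\sigma_c$ there are no orbits at all entering or leaving $P_3$ from $\{X>0\}$ needs these cases to be addressed, even if briefly. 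Your argument that the tail occurs at $\xi\to\infty$ (via positivity of $X$ at finite $\xi_0$) is sound and a valid alternative to the paper's direct use of $Z/X^2\to\infty$, but again it should also observe explicitly that $\xi\to0$ is excluded (the same formula for $X$ shows $X\to\infty$ in that case).
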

\begin{proof}
The proof is rather long and will be divided into several steps for the readers' convenience. The steps are similar to the ones in the analysis in dimension $N=1$ \cite[Lemma 3.1]{IS20b} but the calculations give a different outcome depending on $N$ and $\sigma$.

\medskip

\noindent \textbf{Step 1. A new change of variable}. We begin with a change of variable suggested in \cite[Section 3.1F, p.331]{Wig} by letting
\begin{equation}\label{interm3}
v=(m-1)Y+\sigma X, \quad u=\sqrt{m-1}(Z-1), \quad z=X
\end{equation}
to obtain from the system \eqref{PSsyst}, after straightforward calculations, a new system
\begin{equation}\label{interm4}
\left\{\begin{array}{ll}\dot{v}=-\sqrt{m-1}u-\frac{m+1}{2(m-1)}v^2+\frac{K_1(\sigma)}{2(m-1)}zv+\frac{K_2(\sigma)}{m-1}z^2,\\
\dot{u}=\sqrt{m-1}v+uv,\\
\dot{z}=-\frac{\sigma+2}{2}z^2+\frac{1}{2}zv,\end{array}\right.
\end{equation}
where
\begin{equation}\label{coef}
K_1(\sigma)=(3m+1)\sigma-2(m-1)(N-1), \qquad K_2(\sigma)=\sigma[(m-1)(N-2)-m\sigma]
\end{equation}
Since our goal is to establish the first terms of the normal form of the system \eqref{interm4}, we perform one further change of variable according to \cite[Section 8.5]{KuBook} by letting $w=v+iu$, or equivalently
$$
v=\frac{w+\overline{w}}{2}, \quad u=\frac{w-\overline{w}}{2i}.
$$
Starting from $\dot{w}=\dot{v}+i\dot{u}$, using the equations in \eqref{interm4} and the correspondence between $(v,u)$ and $(w,\overline{w})$, we find
\begin{equation}\label{interm5}
\begin{split}
\dot{w}&=i\sqrt{m-1}w+\frac{K_2(\sigma)}{m-1}z^2+\left(\frac{1}{4}-\frac{m+1}{8(m-1)}\right)w^2\\
&-\left(\frac{1}{4}+\frac{m+1}{8(m-1)}\right)\overline{w}^2+\frac{K_1(\sigma)}{4(m-1)}(zw+z\overline{w})-\frac{m+1}{4(m-1)}w\overline{w},
\end{split}
\end{equation}
and
\begin{equation}\label{interm6}
\dot{z}=-\frac{\sigma+2}{2}z^2+\frac{1}{4}zw+\frac{1}{4}z\overline{w}.
\end{equation}

\medskip

\noindent \textbf{Step 2. The Poincar\'e normal form.} Following the general theory in \cite[Section 8.5]{KuBook} and letting $g(z,w,\overline{w})$ (for the equation \eqref{interm6}), respectively $h(z,w,\overline{w})$ (for the equation \eqref{interm5}) be the nonlinear parts of the equations in the previous step, we can derive the Taylor expansions of $g$ and $h$ in the following general forms (according to the notation on \cite[p. 332-333]{KuBook})
$$
g(z,w,\overline{w})=\sum\limits_{j+k+l\geq2}\frac{1}{j!k!l!}g_{jkl}z^jw^k\overline{w}^l, \quad h(z,w,\overline{w})=\sum\limits_{j+k+l\geq2}\frac{1}{j!k!l!}h_{jkl}z^jw^k\overline{w}^l.
$$
In our specific case, one can readily establish the coefficients up to order 2 as it follows:
\begin{equation}\label{interm7}
g_{200}=-(\sigma+2), \ g_{110}=g_{101}=\frac{1}{4}, \ g_{020}=g_{002}=g_{011}=0,
\end{equation}
and (using the notation with $h$ for the $w$ equation)
\begin{equation}\label{interm8}
\begin{split}
&h_{200}=\frac{2K_2(\sigma)}{m-1}, \ h_{020}=\frac{1}{2}-\frac{m+1}{4(m-1)}, \ h_{002}=-\left(\frac{1}{2}+\frac{m+1}{4(m-1)}\right),\\
&h_{110}=h_{101}=\frac{K_1(\sigma)}{4(m-1)}, \ h_{011}=-\frac{m+1}{4(m-1)}.
\end{split}
\end{equation}
We now rely on the explicit formulas given in \cite[Lemma 8.9]{KuBook} to calculate the coefficients of the Poincar\'e normal form of the system having as starting point the coefficients in \eqref{interm7} and \eqref{interm8}. Skipping the technical details (indicated in the particular case $N=1$ in \cite[Lemma 3.1]{IS20b}) and maintaining the terms up to order two, we can write
\begin{equation*}
\left\{\begin{array}{ll}\dot{z}=-\frac{\sigma+2}{2}z^2+O(|(z,w,\overline{w})|^3),\\
\dot{w}=i\sqrt{m-1}w+\frac{K_1(\sigma)}{4(m-1)}zw+O(|(z,w,\overline{w})|^3),\end{array}\right.
\end{equation*}
Undoing the change of variable $w=v+iu$ in order to get back to the variables $(z,v,u)$ and maintaining only the terms up to order two, we finally find the Poincar\'e normal form of the system \eqref{interm4}:
\begin{equation}\label{normal.syst}
\left\{\begin{array}{ll}\dot{z}=-\frac{\sigma+2}{2}z^2+O(|(z,v,u)|^3),\\
\dot{v}=-\sqrt{m-1}u+\frac{K_1(\sigma)}{4(m-1)}zv+O(|(z,v,u)|^3),\\
\dot{u}=\sqrt{m-1}v+\frac{K_1(\sigma)}{4(m-1)}zu+O(|(z,v,u)|^3).\end{array}\right.
\end{equation}
Notice that the coefficient $K_1(\sigma)$ plays an important role in the normal form \eqref{normal.syst} and that $K_1(\sigma)<0$ for $\sigma\in(0,\sigma_c)$ while $K_1(\sigma)>0$ for $\sigma>\sigma_c$.

\medskip

\noindent \textbf{Step 3. End of the proof.} Following again \cite[Section 3.1F]{Wig}, we pass the normal form \eqref{normal.syst} into cylindrical coordinates by letting $v=r\cos\,\theta$, $u=r\sin\,\theta$ and $z$ unchanged. We thus find the following normal form in cylindrical coordinates:
\begin{equation}\label{normal.cyl}
\left\{\begin{array}{ll}\dot{z}=-\frac{\sigma+2}{2}z^2+O(|(z,r)|^3),\\
\dot{r}=\frac{K_1(\sigma)}{4(m-1)}zr+O(|(z,r)|^3),\\
\dot{\theta}=\sqrt{m-1}+O(|(z,r)|^3),\end{array}\right.
\end{equation}
Assume first that we are not considering connections included in the planes $\{z=0\}$ or $\{r=0\}$ and let $\sigma\in(0,\sigma_c)$, which is equivalent to $K_1(\sigma)<0$. We infer then from the first two equations in \eqref{normal.cyl} that in a neighborhood of the critical point $P_3$ (seen as the origin in \eqref{normal.cyl}) both components $z$ and $r$ are monotonically decreasing along the trajectories of the system. This gives that the orbits entering such neighborhood cannot end in a limit cycle and have to enter $P_3$. On the contrary, when $\sigma>\sigma_c$ we have $K_1(\sigma)>0$. We can then integrate (up to third order) the system formed by the first two equations in \eqref{normal.cyl} to get in a neighborhood of the critical point $P_3$ that
\begin{equation*}
\frac{dz}{dr}\sim-\frac{2(\sigma+2)(m-1)}{K_1(\sigma)}\frac{z}{r}=K_3(\sigma)\frac{z}{r},
\end{equation*}
where $K_3(\sigma)<0$ for $\sigma>\sigma_c$, whence we obtain by integration that
\begin{equation}\label{interm9bis}
z\sim Cr^{K_3(\sigma)}, \qquad K_3(\sigma)=-\frac{2(\sigma+2)(m-1)}{K_1(\sigma)}.
\end{equation}
We infer from \eqref{interm9bis} that the orbits in a neighborhood of the critical point $P_3$ and corresponding to integration constants $C>0$ in \eqref{interm9bis} tend to hyperbolas that do not go out of the point. There might be still an orbit going out of $P_3$ corresponding to the constant $C=0$ in \eqref{interm9bis}, that is, tangent to the plane $\{z=0\}$ but not contained in it. But on such an orbit going out of $P_3$ we have $X(\xi)\to0$, $Z(\xi)\to1$, thus we deduce from \eqref{PSchange} that
\begin{equation}\label{interm9}
\frac{Z(\xi)}{X(\xi)^2}=\frac{1}{m}\xi^{\sigma+2}\to+\infty,
\end{equation}
which shows that there is no orbit containing profiles and going out of this point into the region $\{X>0\}$ of the phase space. Notice at this point that also in the case $\sigma\in(0,\sigma_c)$, where we already know that the orbits coming from the interior of the phase space enter $P_3$, we obtain in \eqref{interm9bis} the form in which such orbits enter $P_3$ (in this case with $K_3(\sigma)>0$). It remains to consider the orbits fully contained in the planes $\{z=0\}$ or $\{r=0\}$ of the system \eqref{normal.cyl}. Since $z=X$, the former orbits are contained in the invariant plane $\{X=0\}$ and we discard them, as they do not contain interesting profiles. For the latter plane, $r=0$ implies $v=u=0$, thus such orbits are contained in the line of equation $Z=1$, $(m-1)Y+\sigma X=0$, which in terms of profiles becomes the hyperbola
\begin{equation}\label{hyp}
f(\xi)=\left(\frac{1}{m-1}\right)^{1/(m-1)}\xi^{-\sigma/(m-1)},
\end{equation}
and this is not a solution to Eq. \eqref{ODE}. Thus, no interesting trajectories come from these exceptional planes. Coming back to the previous analysis, we conclude that the critical point $P_3$ behaves as a repeller for $\sigma>\sigma_c$ and as an attractor for the orbits coming from the region $\{X>0\}$ for $\sigma\in(0,\sigma_c)$. In the latter case, the orbits entering $P_3$ have $Z(\xi)\to1$, which is equivalent to \eqref{beh.P3} taking into account the definition of $Z$ in \eqref{PSchange}. To end the proof, the fact that \eqref{beh.P3} holds true as $\xi\to\infty$ follows immediately from \eqref{interm9}. The proof is complete.
\end{proof}
Let us remark here that we omit the local analysis near the point $P_3$ exactly for $\sigma=\sigma_c$. Since $K_1(\sigma)=0$ in this case, the analysis becomes much more involved, as terms of higher order in the Poincar\'e normal form have to be considered. However, the good news is that the analysis of this critical case is not needed for the classification of the profiles, thus we refrain from entering it.

\subsection{Periodic orbits}\label{subsec.cycle}

Apart from the critical points in the plane analyzed above, the system \eqref{PSsyst} presents some explicit periodic orbits (or cycles) lying inside the invariant plane $\{X=0\}$. Indeed, by letting $X=0$ in \eqref{PSsyst}, we obtain the same system as in dimension $N=1$, namely
\begin{equation}\label{syst.X0}
\left\{\begin{array}{ll}\dot{Y}=-\frac{m+1}{2}Y^2+1-Z,\\ \dot{Z}=(m-1)YZ,\end{array}\right.
\end{equation}
that can be integrated to obtain the following curves
\begin{equation}\label{cycles}
Y^2=\frac{2}{m+1}-\frac{1}{m}Z-KZ^{-(m+1)/(m-1)},
\end{equation}
which are periodic orbits of the system provided $K>0$ (as for $K<0$ they cross the plane $\{Z=0\}$ and we are not interested in them). In particular, the one with $K=0$ and the parabolic cylinder in direction $X>0$ constructed from it proved to be very important as a separatrix in the analysis in dimension $N=1$ \cite[Section 4]{IS20b}. These periodic orbits are plotted in Figure \ref{fig1}, where those corresponding to constants $K>0$ lie in the region bounded by the explicit separatrix with $K=0$.

\begin{figure}[ht!]
  \begin{center}
  \includegraphics[width=10cm,height=7.5cm]{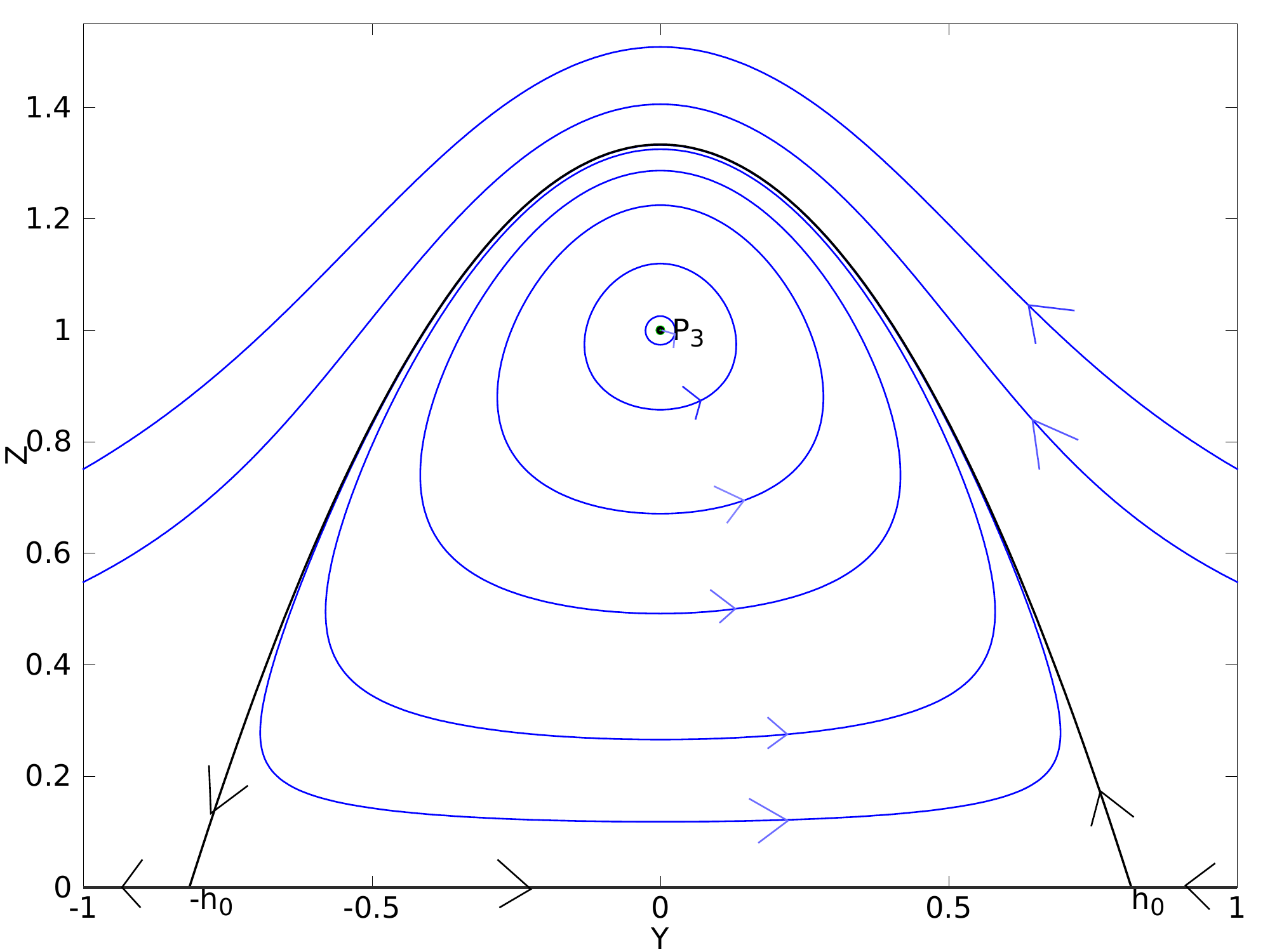}
  \end{center}
  \caption{Periodic orbits in the invariant plane $\{X=0\}$}\label{fig1}
\end{figure} 

In our case, we need to know whether such periodic orbits can be limit cycles for orbits coming from the positive part $\{X>0\}$ of the phase space. Since we are dealing with a three-dimensional dynamical system, the theory of the stability of limit cycles is quite complicated in practice and the analysis of the Poincar\'e map (which is usually the criterion for stability) appears to be not easily available if the analytic form of the periodic orbit is difficult to handle. Thus, we employ a different technique, working directly with the differential equation \eqref{ODE}, to show that at least when $\sigma\neq\sigma_c$ there are no orbits coming from $\{X>0\}$ and having \eqref{cycles} as limit cycles. The idea is to show that, if a solution to \eqref{ODE} has infinitely many oscillations, they have to be damped and their amplitude converges to zero.
\begin{proposition}\label{prop.cycles}
For any $\sigma\neq\sigma_c$, there is no orbit coming from the region $\{X>0\}$ of the phase space associated to the system \eqref{PSsyst} and having any of the orbits in \eqref{cycles} with $K>0$ as limit cycle.
\end{proposition}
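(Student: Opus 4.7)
I would argue by contradiction. Suppose an orbit of \eqref{PSsyst} in $\{X>0\}$ has one of the cycles \eqref{cycles} with constant $K>0$ as its $\omega$-limit set. Because the cycles lie inside the invariant plane $\{X=0\}$ with $Y,Z$ bounded and nondegenerate, the definitions \eqref{PSchange} force $\xi\to\infty$ along the orbit while $Z(\eta)=(m-1)\xi^\sigma f^{m-1}(\xi)$ oscillates persistently between two values $0<Z_1<1<Z_2$ (the two roots of $\tfrac{2}{m+1}-Z/m-KZ^{-(m+1)/(m-1)}=0$). Equivalently, $\xi^{\sigma/(m-1)}f(\xi)$ does not converge but oscillates around $[1/(m-1)]^{1/(m-1)}$ with a non-vanishing asymptotic amplitude. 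The aim is to rule out this behavior for $\sigma\neq\sigma_c$.

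The engine of the proof is the planar Hamiltonian $H(Y,Z):=Z^{(m+1)/(m-1)}[\tfrac{2}{m+1}-Y^2-Z/m]$, which is conserved on \eqref{syst.X0} and hence equals the constant $K$ along the assumed limit cycle. A direct computation along the full system \eqref{PSsyst} gives
\begin{equation*}
\dot H \;=\; X\,Z^{(m+1)/(m-1)}\!\left[\tfrac{2\sigma}{m-1}(1-Z) \,+\, \tfrac{2(N-1)(m-1)-(m+1)\sigma}{m-1}\,Y^2\right],
\end{equation*}
so $\dot H=O(X)$ along the approaching orbit. Since $H\to K$ by assumption, the cumulative drift $\int_{\eta_0}^{\infty}\dot H\,d\eta$ must converge in the reals. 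The crux is then an averaging estimate: the relation $\dot X/X=\tfrac{m-1}{2}Y-X$ combined with $\oint Y\,d\eta=0$ on the cycle shows that $X$ varies only slowly over one period, so that $X(\eta_n)$ may be factored out of the integrand over a near-period. Using $Y^2=\tfrac{2}{m+1}-Z/m-KZ^{-(m+1)/(m-1)}$ on the cycle together with the identity obtained by computing $\oint \dot Y\cdot Z^k\,d\eta$ in two ways, all the moments of $(Y,Z)$ in the bracket reduce to moments of $Z$ alone. The leading drift per period should then take the form $\Delta_n H\approx c(K)\,X(\eta_n)(\sigma-\sigma_c)$ with $c(K)\neq 0$. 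For $\sigma\neq\sigma_c$ this drift has a definite sign; accumulating over the infinite sequence of periods, together with the divergence $\int_{\eta_0}^{\infty}X\,d\eta=\int_{\xi_0}^{\infty}d\xi/\xi=+\infty$, then forces $|\!\int\dot H\,d\eta|=+\infty$, contradicting $H\to K$.

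The main obstacle is precisely this identification of the cycle-averaged drift with the exponent $\sigma_c$ from \eqref{crit.exp}, rather than with some other linear function of $\sigma$ coming from a naive planar analysis. Indeed, the purely planar integration-by-parts relation $\oint \dot Y Z^k\,d\eta=-k(m-1)\oint Y^2 Z^k\,d\eta$ picks up, along the full 3D orbit, an extra correction $(k\sigma-(N-1))\oint XYZ^k\,d\eta$ coming from the terms $-(N-1)XY$ in $\dot Y$ and $\sigma XZ$ in $\dot Z$; only after these $O(X)$ corrections are tracked consistently, and combined with the slow decay of $X$, does the critical value $\sigma_c$ emerge. Handling these 3D corrections in a clean averaging framework — and ensuring that the residual errors are of strictly smaller order than the linear term in $\sigma-\sigma_c$ — is the most delicate technical point of the argument and the one that ties the proposition to the critical exponent already singled out in the local analysis of $P_3$.
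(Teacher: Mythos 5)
Your choice of conserved quantity and the Melnikov-type plan are natural, but there is a genuine gap that you have correctly located and that the proposed fix does not close. The planar Hamiltonian $H(Y,Z)=Z^{(m+1)/(m-1)}\bigl[\tfrac{2}{m+1}-Y^2-\tfrac{Z}{m}\bigr]$ is indeed conserved on \eqref{syst.X0}, and your formula for $\dot H$ along \eqref{PSsyst} is correct. But the cycle-average of the bracket does not give $K_1(\sigma)$. Using the planar identity $\oint\tfrac{d}{d\eta}(Z^aY)\,d\eta=0$ with $a=\tfrac{m+1}{m-1}$ one gets $\oint Z^a(1-Z)\,d\eta=-\tfrac{m+1}{2}\oint Z^aY^2\,d\eta$, hence the averaged drift per period is proportional to $(N-1)(m-1)-\sigma(m+1)$, which vanishes at $\sigma^*=\tfrac{(N-1)(m-1)}{m+1}$, not at $\sigma_c=\tfrac{2(N-1)(m-1)}{3m+1}$ (they agree only when $N=1$). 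The corrections you invoke — the $(N-1)XY$ and $\sigma XZ$ terms modifying the integration by parts — are $O(X)$ inside an integrand that already carries the prefactor $X$, hence $O(X^2)$: they cannot shift the leading critical value. So as written the averaging yields the wrong exponent, and the argument as stated is not salvageable by tracking those particular corrections more carefully.

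What actually produces $\sigma_c$ is an $O(X)$ \emph{correction to the Lyapunov function itself}, not to the integration by parts. If one sets $V:=Y+\tfrac{\sigma}{m-1}X$, so that $\dot Z=(m-1)ZV$, and introduces
\[
E \;:=\; \tfrac12 Z^{a}V^2 \;+\; \tfrac{1}{2m}Z^{a+1}\;-\;\tfrac{1}{m+1}Z^{a}
\;=\;-\tfrac12 H \;+\;\tfrac{\sigma}{m-1}\,Z^a X\Bigl(Y+\tfrac{\sigma}{2(m-1)}X\Bigr),
\]
then a direct computation along \eqref{PSsyst} gives
\[
\dot E \;=\; Z^{a}\,V\left[\frac{K_1(\sigma)}{2(m-1)}\,XY \;+\; \frac{\sigma\bigl((m+1)\sigma-2(m-1)\bigr)}{2(m-1)^2}\,X^2\right]
\;=\;\frac{K_1(\sigma)}{2(m-1)}\,X Z^{a}Y^2 \;+\;O(X^2),
\]
so that $\dot E$ has a \emph{pointwise} sign determined by $K_1(\sigma)=(3m+1)\sigma-2(m-1)(N-1)$, with no averaging needed. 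Since $E\to-\tfrac12 K$ would force $\int\dot E\,d\eta$ to converge while $\int X Z^aY^2\,d\eta$ diverges near the limit cycle (because $\log X$ decays only linearly per period, so $\int X\,d\eta=\infty$), this yields the contradiction for $\sigma\ne\sigma_c$. Note that $E$ is exactly the paper's energy for the transformed profile equation \eqref{ODE2}, written in phase-space variables: the paper's change of variables $G=Z^{1/(m-1)}$, $\zeta\propto\xi^{(\sigma+2)/2}$ and the multiplication by $G^{m-1}G'$ produce the same monotone functional, with the divergent damping integral $\tfrac{\overline N-1}{m}\int(G^m)'^2/\zeta\,d\zeta$ playing the role of $\int\dot E\,d\eta$ and $\overline N-1\propto K_1(\sigma)$. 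The discrepancy between $H$ and $E$ (they differ by $O(X)$ but are not proportional as soon as $X>0$) is precisely the source of the wrong critical exponent in the naive averaging, and identifying the corrected $E$ is the essential idea that your sketch lacks.
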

\begin{proof}
Assume for contradiction that there exists such an orbit. It is then obvious that, for $X>0$ but very close to zero, it has to oscillate between two fixed values of $Z$ (the maximal and minimal height of the periodic orbit in \eqref{cycles}) having $Z=1$ in the middle. We can thus say that the profiles contained in this orbit oscillate around the hyperbola $Z=1$, that is \eqref{hyp}, and with an almost constant amplitude for $\xi$ large. We will next show that such solutions to Eq. \eqref{ODE} cannot exist, by working directly with the differential equation. To this end, we first introduce a new change of variables given by
\begin{equation}\label{change.cycles}
G(\zeta)=(m-1)^{1/(m-1)}\xi^{\sigma/(m-1)}f(\xi)=Z(\xi)^{1/(m-1)}, \qquad \zeta=\frac{2}{\sigma+2}\xi^{(\sigma+2)/2},
\end{equation}
and derive by straightforward calculations the differential equation solved by $G(\zeta)$, that is (written in the same form as \eqref{ODE})
\begin{equation}\label{ODE2}
(G^m)''(\zeta)+\frac{\overline{N}-1}{\zeta}(G^m)'(\zeta)+G^m(\zeta)-G(\zeta)+\frac{K(\sigma)G^m(\zeta)}{\zeta^2}=0,
\end{equation}
where
$$
\overline{N}=1-\frac{K_1(\sigma)}{(m-1)(\sigma+2)}, \qquad K(\sigma)=-\frac{4m\sigma K_2(\sigma)}{(m-1)^2(\sigma+2)^2}.
$$
and $K_1(\sigma)$, $K_2(\sigma)$ are defined in \eqref{coef}. Let us notice that at least formally, the oscillations of the profile $f(\xi)$ around the hyperbola \eqref{hyp} translate into oscillations of $G(\zeta)$ with respect to the horizontal line $G=1$, which is an equilibrium point of the equation. It is thus enough to prove that such oscillations must be damped and that any positive solution $G(\zeta)$ to \eqref{ODE2} satisfies $\lim\limits_{\zeta\to\infty}G(\zeta)=1$. This is rather expected at a formal level by removing the last term in the limit, but the rigorous proof is more involved and borrows ideas from the proof of \cite[Lemma 1, Section 4.1.2, p. 183-184]{S4}. We begin by multiplying Eq. \eqref{ODE2} by $(G^{m-1}G')(\zeta)$ to get (quitting the dependence on $\zeta$ to simplify the notation)
\begin{equation*}
\frac{1}{m}(G^m)''(G^m)'+\frac{\overline{N}-1}{m\zeta}[(G^m)']^2+(G^{2m-1}-G^m)G'+\frac{K(\sigma)}{\zeta^2}G^{2m-1}G'=0,
\end{equation*}
which integrated on a generic interval $[\zeta_1,\zeta_2]$ gives
\begin{equation}\label{interm11}
\begin{split}
\frac{1}{2m}\left[(G^m)'^2(\zeta_2)-(G^m)'^2(\zeta_1)\right]&+\Phi(\zeta_2)-\Phi(\zeta_1)+\frac{\overline{N}-1}{m}\int_{\zeta_1}^{\zeta_2}\frac{(G^m)'^2(\zeta)}{\zeta}d\zeta\\
&=-K(\sigma)\int_{\zeta_1}^{\zeta_2}\frac{(G^{2m})'(\zeta)}{2m\zeta^2}d\zeta,
\end{split}
\end{equation}
where
\begin{equation*}
\Phi(\zeta)=\frac{1}{2m}G^{2m}(\zeta)-\frac{1}{m+1}G^{m+1}(\zeta).
\end{equation*}
Since we assumed that $G$ has infinitely many oscillations around the constant value $G=1$, it thus has a sequence of local maxima and minima, that we denote by $\zeta_{n}^{M}$, respectively $\zeta_n^{m}$, such that $\zeta_n^M\to\infty$ and $\zeta_n^m\to\infty$ as $n\to\infty$. At any of these points, $(G^m)'(\zeta_n^m)=(G^m)'(\zeta_n^M)=0$. Moreover, taking into account that the orbit containing $G$ approaches in the limit one periodic orbit of the form \eqref{cycles} with $K>0$ and that $Z=G^{m-1}$, we get that $\Phi(\zeta)$ is uniformly bounded for $\zeta$ sufficiently large. We thus infer that, if we let $\zeta_2=\zeta_k^M$ and $\zeta_1=\zeta_j^m$ (that is, one maximum and one minimum point, not necessarily the same ones in the sequence of them) in \eqref{interm11}, the first four terms either directly vanish or are uniformly bounded. We are thus left with the two integral terms. To estimate them, we express first the derivatives $(G^m)'$ and $(G^{2m})'$ (derivatives being taken with respect to $\zeta$) in terms of the component $Z$ in the phase space. Taking into account that $G=Z^{1/(m-1)}$, the fact that in the system \eqref{PSsyst} the derivatives are taken with respect to the independent variable $\eta$ introduced in \eqref{change2}, and the change of variable from $\xi$ to $\zeta$, we have
\begin{equation}\label{interm13}
\begin{split}
\frac{d(G^m)}{d\zeta}&=\frac{d}{d\xi}Z^{m/(m-1)}\frac{d\xi}{d\zeta}=\xi^{-\sigma/2}\frac{d}{d\xi}Z^{m/(m-1)}\\
&=\xi^{-\sigma/2}f(\xi)^{-(m-1)/2}(\xi)\frac{d}{d\eta}Z^{m/(m-1)}\\&=\frac{m}{m-1}\xi^{-\sigma/2}\left[\frac{1}{m-1}\xi^{-\sigma}Z\right]^{-1/2}Z^{1/(m-1)}\dot{Z}\\
&=\frac{m}{\sqrt{m-1}}Z^{1/(m-1)+1/2}[(m-1)Y+\sigma X],
\end{split}
\end{equation}
where in the last calculation step we used the equation for $\dot{Z}$ from the system \eqref{PSsyst}. Moreover, following a similar calculations as in \eqref{interm13}, we also get
\begin{equation}\label{interm14}
\frac{d(G^{2m})}{d\zeta}=\frac{2m}{\sqrt{m-1}}Z^{(m+1)/(m-1)+1/2}[(m-1)Y+\sigma X].
\end{equation}
We deduce from \eqref{interm14} and the fact that the trajectory containing the profile $f$ approaches a limit cycle of the form \eqref{cycles} that $(G^{2m})'$ is uniformly bounded by some constant $L>0$ for $\zeta$ sufficiently large, thus
\begin{equation}\label{interm15}
\left|\int_{\zeta_1}^{\infty}\frac{(G^{2m})'(\zeta)}{2m\zeta^2}d\zeta\right|\leq\frac{L}{2m}\int_{\zeta_1}^{\infty}\frac{1}{\zeta^2}d\zeta<\infty.
\end{equation}
On the contrary, \eqref{interm13} together with the fact that $G=Z^{1/(m-1)}$ and that the trajectory containing the profile $f$ defining our $G$ in \eqref{change.cycles} approaches a limit cycle imply that $G^m$ (hence also $(G^m)'$) approaches a periodic function for $\zeta$ sufficiently large (as being a solution to an autonomous equation in the limit). This gives that the remaining integral in \eqref{interm11}, namely
\begin{equation}\label{interm16}
\frac{\overline{N}-1}{m}\int_{\zeta_1}^{\infty}\frac{(G^m)'^2(\zeta)}{\zeta}d\zeta
\end{equation}
is divergent. We thus reach a contradiction in \eqref{interm11} by choosing $\xi_1$ a fixed minimum point, $\xi_2=\xi_{n}^M$ and passing to the limit as $n\to\infty$, as all but one term are bounded, provided that the coefficient in front of the integral in \eqref{interm16} is not equal to zero. Recalling that
$$
\frac{\overline{N}-1}{m}=-\frac{K_1(\sigma)}{m(m-1)(\sigma+2)},
$$
and the fact that $K_1(\sigma)=0$ if and only if $\sigma=\sigma_c$, the contradiction is reached for any $\sigma\neq\sigma_c$, as claimed.
\end{proof}
We again refrain from analyzing the critical case $\sigma=\sigma_c$ here, as it will be not needed in the sequel. Let us notice that the above proof also applies for $\sigma=0$, providing a slight improvement of the result in \cite[Remark, p. 184]{S4}, where it is only shown that the oscillations are damped but not convergent to zero. The proof of Proposition \ref{prop.cycles} completes the local analysis of the finite part of the plane. We are now ready to pass to the analysis of the critical points at infinity.

\section{Analysis at infinity of the phase space}\label{sec.infty}

The aim of this section is to complete the local analysis of the phase space associated to the system \eqref{PSsyst} by studying the critical points at infinity. We follow the recipe in \cite[Section 3.10]{Pe} where new variables $(\overline{X},\overline{Y},\overline{Z},W)$ are introduced in order to pass to the Poincar\'e hypersphere. More precisely, we let
$$
X=\frac{\overline{X}}{W}, \ Y=\frac{\overline{Y}}{W}, \ Z=\frac{\overline{Z}}{W},
$$
and following \cite[Theorem 4, Section 3.10]{Pe}, the critical points at infinity of the phase space associated to the system \eqref{PSsyst} lie on the equator of the Poincar\'e hypersphere, that is, at points of the form $(\overline{X},\overline{Y},\overline{Z},0)$ such that $\overline{X}^2+\overline{Y}^2+\overline{Z}^2=1$, and are at the same time solutions to the following system:
\begin{equation}\label{Poincare1}
\left\{\begin{array}{ll}\overline{X}Q_2(\overline{X},\overline{Y},\overline{Z})-\overline{Y}P_2(\overline{X},\overline{Y},\overline{Z})=0,\\
\overline{X}R_2(\overline{X},\overline{Y},\overline{Z})-\overline{Z}P_2(\overline{X},\overline{Y},\overline{Z})=0,\\
\overline{Y}R_2(\overline{X},\overline{Y},\overline{Z})-\overline{Z}Q_2(\overline{X},\overline{Y},\overline{Z})=0,\end{array}\right.
\end{equation}
where $P_2$, $Q_2$ and $R_2$ are the homogeneous second degree parts in the right hand side of the system \eqref{PSsyst}, namely
\begin{equation*}
\begin{split}
&P_2(\overline{X},\overline{Y},\overline{Z})=\frac{m-1}{2}\overline{X}\overline{Y}-\overline{X}^2,\\
&Q_2(\overline{X},\overline{Y},\overline{Z})=-\frac{m+1}{2}\overline{Y}^2-(N-1)\overline{X}\overline{Y},\\
&R_2(\overline{X},\overline{Y},\overline{Z})=\overline{Z}((m-1)\overline{Y}+\sigma\overline{X}).
\end{split}
\end{equation*}
The system \eqref{Poincare1} becomes
\begin{equation}\label{Poincare2}
\left\{\begin{array}{ll}\overline{X}\overline{Y}((2-N)\overline{X}-m\overline{Y})=0,\\
\overline{X}\overline{Z}\left((\sigma+1)\overline{X}+\frac{m-1}{2}\overline{Y}\right)=0,\\
\overline{Y}\overline{Z}\left((\sigma+N-1)\overline{X}+\frac{3m-1}{2}\overline{Y}\right)=0,\end{array}\right.
\end{equation}
and taking into account that we are only considering the part of the equator where $\overline{X}\geq0$ and $\overline{Z}\geq0$, we readily get the following critical points:
\begin{equation*}
\begin{split}
&Q_1=(1,0,0,0), \ \ Q_{2,3}=(0,\pm1,0,0), \ \ Q_4=(0,0,1,0), \\
& {\rm and} \ Q_5=\left(\frac{m}{\sqrt{(2-N)^2+m^2}},\frac{2-N}{\sqrt{(2-N)^2+m^2}},0,0\right).
\end{split}
\end{equation*}
Let us notice at this point that the dimension $N=2$ appears to be critical in the analysis of these points. Thus, we will let it aside for a moment and restrict ourselves first at dimensions $N\geq3$. Despite the fact that the differences with respect to the dimension $N=1$ (studied in \cite{IS20b}) appear to be not significant in the expression of the critical points and system \eqref{Poincare2}, we shall see that they are noticeable with respect to the qualitative behavior of some of the profiles. We analyze below the critical points one by one.
\begin{lemma}[Analysis of the point $Q_1$]\label{lem.Q1}
For $N\geq3$, the critical point at infinity represented as $Q_1=(1,0,0,0)$ in the Poincar\'e hypersphere has a two-dimensional unstable manifold and a one-dimensional stable manifold. The orbits going out of this point to the finite part of the phase space contain profiles $f(\xi)$ such that $f(0)=a>0$ and $f'(0)=0$, corresponding to the assumption (P1) in Definition \ref{def1}.
\end{lemma}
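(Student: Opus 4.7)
The plan is to analyze the flow near $Q_1$ by passing to a local chart on the Poincar\'e hypersphere in the standard way of \cite[Section 3.10]{Pe}. Since $Q_1$ corresponds to the direction $X\to+\infty$, I would introduce the local coordinates
\[
\xi_1=\frac{Y}{X},\quad \xi_2=\frac{Z}{X},\quad \xi_3=\frac{1}{X},
\]
which map a neighborhood of $Q_1$ (with $\xi_3\geq 0$) to a neighborhood of the origin of $(\xi_1,\xi_2,\xi_3)$-space, and rescale time by $d\tau=X\,d\eta$. Since $X>0$ this preserves the orientation of the flow, and a direct substitution turns \eqref{PSsyst} into the smooth autonomous system
\[
\begin{cases}
\dfrac{d\xi_1}{d\tau}=-(N-2)\xi_1-m\xi_1^2+\xi_3^2-\xi_2\xi_3,\\[4pt]
\dfrac{d\xi_2}{d\tau}=(\sigma+1)\xi_2+\dfrac{m-1}{2}\xi_1\xi_2,\\[4pt]
\dfrac{d\xi_3}{d\tau}=\xi_3-\dfrac{m-1}{2}\xi_1\xi_3,
\end{cases}
\]
whose origin corresponds to $Q_1$.

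The linearization at the origin is diagonal with eigenvalues $-(N-2),\ \sigma+1,\ 1$. For $N\geq 3$ the first is strictly negative and the other two are strictly positive, so the stable/unstable manifold theorem yields the claimed one-dimensional stable manifold (tangent to the $\xi_1$-axis) and two-dimensional unstable manifold (tangent to the $(\xi_2,\xi_3)$-plane). I would emphasize that the hypothesis $N\geq 3$ enters precisely here: for $N=2$ the eigenvalue $-(N-2)$ vanishes and a centre direction appears, requiring a more delicate separate treatment.

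To translate the orbits on the two-dimensional unstable manifold into the profile picture, I would use the explicit identities
\[
\frac{Y}{X}=\frac{\xi f'(\xi)}{f(\xi)},\qquad \frac{Z}{X}=\sqrt{\frac{m-1}{m}}\,\xi^{\sigma+1}f^{(m-1)/2}(\xi),
\]
which follow immediately from \eqref{PSchange}. Along an orbit leaving $Q_1$ one has $X\to+\infty$ with both $Y/X\to 0$ and $Z/X\to 0$, and inspection of $X=\sqrt{m(m-1)}\,\xi^{-1}f^{(m-1)/2}(\xi)$ together with these ratios rules out both alternatives $\xi\to\xi_0\in(0,\infty)$ with $f\to\infty$ (which would make $\xi f'/f$ blow up) and $\xi\to\infty$ (which would force $\xi^{\sigma+1}f^{(m-1)/2}\to\infty$, contradicting $Z/X\to 0$). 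The only remaining possibility is $\xi\to 0$ with $f(0)=a>0$, and then regularity of any classical solution of \eqref{ODE} at $\xi=0$ (needed to cancel the singular term $(N-1)(f^m)'/\xi$) forces $(f^m)'(0)=0$, hence $f'(0)=0$, which is precisely property (P1).

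The main obstacle is the case analysis that rules out $\xi\to\xi_0\in(0,\infty)$ or $\xi\to\infty$ along an orbit approaching $Q_1$: although intuitively clear, it has to be carried out carefully, using the two identities above to produce a contradiction in each of the alternative scenarios, in the same spirit as in the proofs of Lemma~\ref{lem.P0P1} and Lemma~\ref{lem.P2} and as in \cite[Section 3]{IS20b}. Once this is dispatched, the rest of the argument is a rather direct dictionary between the phase-space description near $Q_1$ and the local behavior of the profiles at $\xi=0$.
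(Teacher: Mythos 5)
Your local chart $\xi_1=Y/X$, $\xi_2=Z/X$, $\xi_3=1/X$ with time rescaled by $d\tau=X\,d\eta$, the resulting quadratic system, and the eigenvalue/manifold analysis are precisely what the paper does (the paper writes the same system with an overall minus sign, following Perko's recipe, and then reads the eigenvalues off the same diagonal). So the approach is the same; the problem is in the final translation into profile behavior, where there is a genuine gap.

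Ruling out $\xi\to\xi_0\in(0,\infty)$ and $\xi\to\infty$ is fine, though the stated reason in the first case is shakier than necessary: if $\xi\to\xi_0>0$ and $X\to\infty$ then $f\to\infty$, and then $Z/X=\sqrt{(m-1)/m}\,\xi^{\sigma+1}f^{(m-1)/2}(\xi)\to\infty$, which directly contradicts $Z/X\to0$; by contrast, ``$\xi f'/f$ blows up'' is not something you can assert without an a priori monotonicity or rate estimate on $f$. The real issue is the jump ``the only remaining possibility is $\xi\to 0$ with $f(0)=a>0$.'' The constraints you extract --- $X\to\infty$, $Y/X\to 0$, $Z/X\to 0$, $\xi\to 0$ --- are equally compatible with $f(\xi)\to 0$, provided it does so slower than $\xi^{2/(m-1)}$: then $X\to\infty$ still holds and the two ratios go to $0$ automatically. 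Nothing in your argument excludes this. To close the gap you need the finer asymptotics of the unstable manifold, which is exactly what the paper uses: tangency to the $(\xi_2,\xi_3)$-plane gives
\[
\frac{d\xi_2}{d\xi_3}\sim(\sigma+1)\frac{\xi_2}{\xi_3},\qquad\text{hence}\qquad \xi_2\sim C\,\xi_3^{\sigma+1},\quad C\in(0,\infty),
\]
for orbits not contained in the invariant planes $\{\xi_2=0\}$ (i.e.\ $\{Z=0\}$) or $\{\xi_3=0\}$ (the equator). Undoing the chart this says $X^{\sigma}Z\to C$, and from \eqref{PSchange} one computes $X^{\sigma}Z=c(m,\sigma)\,f(\xi)^{(m-1)(\sigma+2)/2}$, so $f(\xi)\to a>0$. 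This is the missing step.

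For $f'(0)=0$, your appeal to the regularity of a classical solution at $\xi=0$ (cancellation of the term $(N-1)(f^m)'/\xi$) is defensible but slightly circular, since it presupposes that $f$ is a classical solution up to $\xi=0$, which is not given a priori. The paper avoids this by reading the derivative off the phase space: the tangency $y/z\to0$ gives $Y/Z\to0$, and since $Y/Z=\sqrt{m/(m-1)}\,\xi^{-\sigma}f^{-(m+1)/2}f'(\xi)$ with $\xi^{-\sigma}\to\infty$ and $f\to a>0$, this forces $f'(\xi)\to 0$. I would recommend using that argument instead.
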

\begin{proof}
Following part (a) of \cite[Theorem 5, Section 3.10]{Pe}, the flow of the system \eqref{PSsyst} in a neighborhood of $Q_1$ is topologically equivalent to the flow near the origin for the new system
\begin{equation}\label{systinf1}
\left\{\begin{array}{ll}-\dot{y}=(N-2)y+my^2+zw-w^2,\\
-\dot{z}=-(\sigma+1)z-\frac{m-1}{2}yz,\\
-\dot{w}=-w+\frac{m-1}{2}yw,\end{array}\right.
\end{equation}
where we choose the minus sign in the system \eqref{systinf1} in order to match the direction of the flow given, for example, by the first equation of the original system \eqref{PSsyst},
$$
\dot{X}=\frac{1}{2}X[(m-1)Y-2X],
$$
leading to $\dot{X}<0$ in a neighborhood of $Q_1$, taking into account that $|X/Y|\to+\infty$ near this point. Since $N\geq3$, the two-dimensional unstable manifold and one-dimensional stable manifold are obvious. The profiles going out of $Q_1$ on the two-dimensional unstable manifold are contained in orbits tangent to the plane $\{y=0\}$ in the system \eqref{systinf1}. On the one hand, we get from \eqref{systinf1} that
$$
\frac{dz}{dw}\sim(\sigma+1)\frac{z}{w},
$$
or equivalenty $z\sim Cw^{\sigma+1}$, which in variables $(X,Z)$ writes
$$
\frac{Z}{X}\sim\frac{C}{X^{\sigma+1}},
$$
whence $X^{\sigma}Z\sim C$, that is $f(\xi)\sim C$. On the other hand, from the tangency of the unstable manifold, we do not only get that $y\to0$ but also $y/z\to0$, which implies $Y/Z\to0$. Together with the fact that $Y/X\to0$ on these profiles, we simultaneously get that
$$
\frac{Y}{X}=\frac{\xi f'(\xi)}{f(\xi)}\to0, \qquad \frac{Y}{Z}=\sqrt{\frac{m}{m-1}}\xi^{-\sigma}f(\xi)^{-(m+1)/2}f'(\xi)\to0,
$$
which, together with the fact that $f(\xi)\sim C$, readily gives that $f'(\xi)\to0$. The fact that all the limits and equivalences above are taken as $\xi\to0$ follows immediately from the fact that $X(\xi)\to\infty$, which leads to $\xi^{-1}\to\infty$.
\end{proof}
\begin{lemma}[Analysis of the points $Q_2$ and $Q_3$]\label{lem.Q23}
For $N\geq2$, the critical points at infinity represented as $Q_{2,3}=(0,\pm1,0,0)$ in the Poincar\'e hypersphere are an unstable node, respectively a stable node. The orbits going out of $Q_2$ contain profiles $f(\xi)$ such that there exists $\xi_0\in(0,\infty)$ with $f(\xi_0)=0$, $f'(\xi_0)=+\infty$, while the orbits entering the point $Q_3$ and contain profiles $f(\xi)$ such that there exists $\xi_0\in(0,\infty)$ with $f(\xi_0)=0$, $f'(\xi_0)=-\infty$.
\end{lemma}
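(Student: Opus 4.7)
The plan is to follow exactly the Poincar\'e compactification strategy used for $Q_1$ in the proof of Lemma \ref{lem.Q1}, but now with the chart centered on the $\overline{Y}$-axis (part (b) of Theorem 5, Section 3.10 in \cite{Pe}). Near $Q_2=(0,1,0,0)$ I would introduce the local coordinates
\begin{equation*}
x=\frac{X}{Y},\qquad z=\frac{Z}{Y},\qquad w=\frac{1}{Y},
\end{equation*}
so that $Q_2$ corresponds to the origin in the half-space $\{w>0\}$, and rescale the independent variable by $d\tau=d\eta/w$. For $Q_3=(0,-1,0,0)$ the analogous substitution takes $w=-1/Y$, still with $w>0$. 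Substituting into \eqref{PSsyst} and eliminating $\dot{w}$ from the equations for $\dot{x}$ and $\dot{z}$ will produce, after a direct computation, an autonomous polynomial system in $(x,z,w)$ whose origin corresponds to $Q_2$, namely
\begin{equation*}
\left\{\begin{array}{l}
x'=mx+(N-2)x^2+xzw-xw^2,\\
z'=\frac{3m-1}{2}z+(\sigma+N-1)xz+z^2w-zw^2,\\
w'=\frac{m+1}{2}w+(N-1)xw+zw^2-w^3,
\end{array}\right.
\end{equation*}
and for $Q_3$ the corresponding system with all three linear coefficients having opposite sign.

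The linearization at the origin of the first system is diagonal with eigenvalues $m$, $(3m-1)/2$, $(m+1)/2$, all strictly positive for $m>1$, so $Q_2$ is an unstable node; the mirror computation for $Q_3$ produces eigenvalues $-m$, $-(3m-1)/2$, $-(m+1)/2$, hence a stable node. Observe that the parameter $N$ enters only through the nonlinear terms of the reduced systems, which is why the nodal structure is valid for any dimension $N\geq2$ without further restrictions.

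The remaining step is to translate the orbits leaving $Q_2$ (respectively entering $Q_3$) into profile behavior. On any such orbit one has $X(\xi)\to 0$, $Z(\xi)\to 0$, and $Y(\xi)\to\pm\infty$ as $\xi$ approaches some limit value $\xi_*$, and the main obstacle is to show that $\xi_*\in(0,\infty)$. To this end, I would exploit the fact that for any power-type local ansatz $f(\xi)\sim A\xi^\alpha$ as $\xi\to 0$ or as $\xi\to\infty$, the definitions \eqref{PSchange} yield
\begin{equation*}
X\sim\sqrt{m(m-1)}\,A^{(m-1)/2}\xi^{\alpha(m-1)/2-1},\qquad Y\sim \alpha\,X,
\end{equation*}
so that $X\to 0$ together with $|Y|\to\infty$ is incompatible with any such boundary expansion; this rules out both $\xi_*=0$ and $\xi_*=\infty$. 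Once $\xi_*=\xi_0\in(0,\infty)$ is secured, the leading-order balance in \eqref{ODE} near $\xi_0$, in which the contributions $-f/(m-1)$ and $\xi^\sigma f^m$ are negligible while $(N-1)(f^m)'(\xi_0)/\xi_0$ remains finite, forces $f^m$ to be smooth at $\xi_0$ with a simple zero there, i.e.\ $f(\xi)\sim C|\xi-\xi_0|^{1/m}$ with $C>0$. This expansion gives $f(\xi_0)=0$ and $f'(\xi)\sim (C/m)|\xi-\xi_0|^{1/m-1}\to\pm\infty$, with the plus sign on the $Q_2$ branch (where $\xi>\xi_0$ so that $Y>0$) and the minus sign on the $Q_3$ branch (where $\xi<\xi_0$ so that $Y<0$), concluding the proof.
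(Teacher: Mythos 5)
Your proof is correct and follows essentially the same route as the paper: use the Poincar\'e chart centered on the $\overline{Y}$-axis to reduce to a local polynomial system (your system coincides exactly with the paper's \eqref{systinf2} after sorting out signs), read off the three positive (resp.\ negative) eigenvalues $m$, $(3m-1)/2$, $(m+1)/2$ to conclude $Q_2$ is an unstable node and $Q_3$ a stable one, and then translate into profile asymptotics. The only difference is that the paper delegates the translation to profiles to Lemma 2.4 of \cite{IS20b}, whereas you sketch it directly; your argument that $\xi_*\in(0,\infty)$ via a power-type ansatz is a bit informal (it assumes $f$ admits such an expansion), but the balance $f(\xi)\sim C|\xi-\xi_0|^{1/m}$ you extract is the correct leading-order behavior and matches the conclusion.
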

\begin{proof}
We employ this time part (b) of \cite[Theorem 5, Section 3.10]{Pe} to infer that the flow of the system near the points $Q_2$ and $Q_3$ is topologically equivalent to the flow near the origin for the system
\begin{equation}\label{systinf2}
\left\{\begin{array}{ll}\pm\dot{x}=-mx+(2-N)x^2+xw^2-xzw,\\
\pm\dot{z}=-\frac{3m-1}{2}z-(\sigma+N-1)xz-z^2w+zw^2,\\
\pm\dot{w}=-\frac{m+1}{2}w-zw^2+w^3-(N-1)xw,\end{array}\right.
\end{equation}
where the minus sign corresponds to the point with $Y\to+\infty$, that is $Q_2$ and the plus sign corresponds to the point with $Y\to-\infty$, that is $Q_3$. This is seen from the fact that both points are characterized by $|Y/X|\to\infty$ and $|Y/Z|\to\infty$ in a neighborhood of them, thus in the second equation of the system \eqref{PSsyst} the term $-(m+1)Y^2/2$ dominates near $Q_2$ and $Q_3$, indicating a direction of the flow from right to left ($Y$ decreasing along the trajectories) near the two points. Since there is no influence of $N$ in the linear terms in the system \eqref{systinf2}, the rest of the analysis is totally identical to the one performed in dimension $N=1$ in \cite[Lemma 2.4]{IS20b} to which we refer.
\end{proof}
We are now left with the points $Q_4$ and $Q_5$. As we shall see, the analysis of these points is much more involved than the analogous one in dimension $N=1$, since in fact they \emph{alternate} to be unstable nodes and, depending on the value of $\sigma>0$, pass from one to the other the orbits presenting a specific vertical asymptote at $\xi=0$. This is made precise in the following statement.
\begin{lemma}[Analysis of the points $Q_4$ and $Q_5$]\label{lem.Q4Q5}
Let $N\geq3$. Then, depending on the value of $\sigma$, either $Q_5$ is an unstable node or $Q_4$ is an unstable node. In any of the two cases, the orbits going out of the point that is an unstable node contain profiles presenting a vertical asymptote at $\xi=0$ with behavior
\begin{equation}\label{beh.asympt}
f(\xi)\sim C\xi^{(2-N)/m}, \qquad {\rm as} \ \xi\to0, \ C>0 \ {\rm free \ constant},
\end{equation}
while the orbits going out of the point which is not an unstable node are included in the infinity part of the phase space and do not contain other profiles.
\end{lemma}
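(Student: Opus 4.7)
I would treat $Q_5$ and $Q_4$ separately because each lies on a different coordinate chart of the Poincar\'e hypersphere, and then reconcile them by identifying which one captures the relevant profile behavior for a given $\sigma$.

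For $Q_5$, the natural chart is the same one used for $Q_1$, namely system \eqref{systinf1}, since $\overline{X}\neq 0$ at $Q_5$. In this chart $Q_5$ corresponds to the critical point $(y^{*},z^{*},w^{*})=((2-N)/m,0,0)$. I would linearize \eqref{systinf1} at this point: the cross partials $\partial(\text{first eq})/\partial z = w$ and $\partial(\text{first eq})/\partial w = z-2w$ both vanish, and the remaining couplings are proportional to $z$ or $w$, so the Jacobian turns out diagonal. The three eigenvalues read, up to the time orientation chosen in \eqref{systinf1},
\begin{equation*}
\mu_1 = N-2,\qquad \mu_2 = \sigma+1 - \frac{(m-1)(N-2)}{2m},\qquad \mu_3 = \frac{(m-1)N+2}{2m},
\end{equation*}
with $\mu_1,\mu_3>0$ for $N\geq 3$. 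Hence the type of $Q_5$ is determined by the sign of $\mu_2$, which changes at a critical value $\sigma^{*}$ that I would compute explicitly; $Q_5$ is an unstable node precisely when $\mu_2>0$.

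For $Q_4$, the relevant chart is $x=X/Z$, $y=Y/Z$, $w=1/Z$ together with the degree-two time rescaling $d\tau=w\,d\eta$ from Perko's Theorem 5(c). The resulting polynomial system vanishes to first order at the origin, so the linearization is identically zero and I cannot read off the flow from eigenvalues. The way around this is to match the complementary regime: from the analysis at $Q_5$ one finds $\mu_2<0$ exactly when the profile ratios $Z/X$ and $Z/Y$ fail to tend to zero along the candidate orbit, and a direct computation with the definitions in \eqref{PSchange} shows this corresponds to $Z\to\infty$ dominating both $X$ and $Y$, i.e. to the approach of $Q_4$. I would then verify compatibility directly by injecting an ansatz of the form \eqref{beh.asympt} into \eqref{ODE} and checking that this gives a one-parameter family of solutions whose orbits satisfy $X,Y,Z\to\infty$ with $Z/X,Z/Y\to\infty$ when $\mu_2<0$, and conversely $Z/X,Z/Y\to 0$ (approach to $Q_5$) when $\mu_2>0$. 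This also certifies that the orbits going out of the point which is \emph{not} an unstable node remain on the equator of the hypersphere and do not produce profiles in the finite phase space.

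The profile behavior \eqref{beh.asympt} would come from integrating the ratio $Y/X = \xi f'(\xi)/f(\xi)$, which on any orbit approaching either $Q_5$ or $Q_4$ tends to $(2-N)/m$, yielding $f(\xi)\sim C\xi^{(2-N)/m}$. That this limit is taken as $\xi\to 0$ would be established by excluding $\xi\to\xi_0\in(0,\infty)$ (which would force $f$ and hence $X,Z$ bounded, contradicting $X\to\infty$ or $Z\to\infty$) and $\xi\to\infty$ (which would force $f\to 0$ through the exponent $(2-N)/m<0$, again contradicting that the orbit leaves for infinity), in the same spirit as the $\xi\to 0$ argument used in Lemma \ref{lem.P2}. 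The main obstacle in this whole program is the complete degeneracy of the linearization at $Q_4$: I expect most of the technical work to go into justifying the matching argument rigorously, either by a quasi-homogeneous blow-up of the $Q_4$ chart that separates the principal scales and exposes a non-degenerate reduced system on the blown-up variety, or by a direct ODE-estimate argument on \eqref{ODE} showing that the ansatz \eqref{beh.asympt} is the only admissible leading-order behavior consistent with $Z\to\infty$ as $\xi\to 0$.
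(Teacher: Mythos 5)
Your local analysis at $Q_5$ is correct and coincides with the paper's: identifying $Q_5$ with $(y,z,w)=((2-N)/m,0,0)$ in system \eqref{systinf1} and computing the (diagonal) Jacobian gives precisely the three eigenvalues you list, and the sign of $\mu_2$ (equivalently, of $K(m,N,\sigma)=2m(\sigma+1)-(N-2)(m-1)$) decides whether $Q_5$ is a node or a saddle. You also correctly diagnose the obstruction at $Q_4$: in Perko's chart for $\overline{Z}$ the Jacobian at the origin is nilpotent, so the linearization is useless there. Where your proposal falls short is in how you close that gap. Injecting the ansatz \eqref{beh.asympt} into \eqref{ODE} and checking the resulting orbit tends to $Q_4$ with $Z/X,Z/Y\to\infty$ only establishes \emph{consistency} of that behavior; it does not show that \eqref{beh.asympt} is the \emph{only} leading-order behavior carried by orbits leaving $Q_4$, which is precisely what the Lemma claims (``do not contain other profiles''). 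The matching argument you sketch is heuristic and would still leave open the possibility of a different, non-power-type asymptote hidden in the degenerate sector.

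The paper resolves this by two devices you don't reach for. First, it introduces a completely different change of variables, $x=\xi^2f^{1-m}/m(m-1)$, $y=\xi f'/f$, $z=\xi^{\sigma+2}/m$ with $d/d\eta=\xi\,d/d\xi$, turning \eqref{ODE} into the polynomial system \eqref{PSsyst3}, in which \emph{both} $Q_5$ and $Q_4$ are desingularized into a single hyperbolic unstable node $Q_4'=(0,(2-N)/m,0)$ with eigenvalues $\frac{2m+(N-2)(m-1)}{2m}$, $N-2$, $\sigma+2$, all positive. This makes the ``immigration'' of orbits from $Q_5$ to $Q_4$ as $K(m,N,\sigma)$ changes sign a clean, eigenvalue-level fact rather than a matching heuristic. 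Second, to exclude profiles at $Q_4$ not seen by that chart, the paper argues directly from \eqref{ODE} that $Z\to\infty$ with $|Z/X|,|Z/Y|\to\infty$ forces $f/(m-1)$ to be lower order than $\xi^\sigma f^m$, then that $(f^m)'/f^m\to-\infty$ forces $(N-1)(f^m)'/\xi$ to dominate $\xi^\sigma f^m$, leaving exactly the first two terms of \eqref{ODE} at leading order and hence only \eqref{beh.asympt}. Your second fallback (``a direct ODE-estimate argument on \eqref{ODE} showing that \eqref{beh.asympt} is the only admissible leading-order behavior'') is exactly this step, and you are right that it is where the work is — but your proposal names it without doing it, and the quasi-homogeneous blow-up you propose as an alternative is in effect replaced in the paper by the much simpler global change of variables \eqref{change3}. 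One small imprecision to fix if you pursue this: the Perko-chart linearization at $Q_4$ is nilpotent, not identically zero (the $-w$ coupling survives the rescaling), and along the asymptote the component $Y$ tends to $-\infty$, not $+\infty$.
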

\begin{proof}
Since the proof is longer and more involved than the previous ones, it will be divided into several steps for the easiness of the reading.

\medskip

\noindent \textbf{Step 1. Local analysis near $Q_5$}. We can analyze locally $Q_5$ by identifying it with the critical point $(y,z,w)=((2-N)/m,0,0)$ in the system \eqref{systinf1}. The linearization of \eqref{systinf1} near $Q_5$ has the matrix
$$
M(Q_5)=\left(
         \begin{array}{ccc}
           N-2 & 0 & 0 \\
           0 & \frac{2m(\sigma+1)-(N-2)(m-1)}{2m} & 0 \\
           0 & 0 & \frac{2m+(N-2)(m-1)}{2m} \\
         \end{array}
       \right),
$$
thus $Q_5$ is an unstable node if
\begin{equation}\label{interm17}
K(m,N,\sigma):=2m(\sigma+1)-(N-2)(m-1)>0
\end{equation}
Assume now that \eqref{interm17} holds true. Then the orbits going out of $Q_5$ are characterized by $y=(2-N)/m$, that is $Y/X\sim(2-N)/m$, which in terms of profiles reads
$$
\left(f(\xi)^{(m-1)/2}\right)'\sim\frac{(2-N)(m-1)}{2m}\frac{f(\xi)^{(m-1)/2}}{\xi},
$$
and this leads to the desired behavior \eqref{beh.asympt} by integration. Since $X/Z\to\infty$ in a neighborhood of $Q_5$, we get that
$$
\xi^{-1-\sigma}f(\xi)^{-(m-1)/2}\to\infty,
$$
and we infer from \eqref{beh.asympt} that necessarily $\xi^{-1-\sigma}\to\infty$, which gives that the previous local behavior must be taken as $\xi\to0$. When \eqref{interm17} does not hold true and we have the contrary inequality, the unstable manifold remains two-dimensional and it is completely contained in the invariant plane $\{z=0\}$ of the system \eqref{systinf1}, which in terms of profiles reduces to either $Z=0$ or $X=\infty$, thus in both cases no new profiles exist and $Q_5$ behaves like a saddle for the orbits approaching it in the finite part of the plane.

\medskip

\noindent \textbf{Step 2. Identification of $Q_5$ and $Q_4$}. We are left with the question of what happens with the profiles with behavior as in \eqref{beh.asympt} when \eqref{interm17} is no longer true, and if the remaining point $Q_4$ does not bring any other new profiles. Moreover, the point $Q_4$ cannot be easily analyzed using the recipes in \cite[Section 3.10]{Pe}. To overcome these difficulties, we introduce a different change of variable
\begin{equation}\label{change3}
x=\frac{1}{m(m-1)}\xi^2f(\xi)^{1-m}, \ y=\frac{\xi f'(\xi)}{f(\xi)}, \ z=\frac{\xi^{\sigma+2}}{m}, \ \frac{d}{d\eta}=\xi\frac{d}{d\xi},
\end{equation}
and in these dependent variables $(x,y,z)$ and independent variable $\eta$ Eq. \eqref{ODE} transforms into the system
\begin{equation}\label{PSsyst3}
\left\{\begin{array}{ll}\dot{x}=x(2-(m-1)y),\\
\dot{y}=-my^2-(N-2)y+x-z,\\
\dot{z}=(\sigma+2)z,\end{array}\right.
\end{equation}
having (among others) the critical point $Q_4'=(0,(2-N)/m,0)$. It is straightforward to check that the linearization of the system \eqref{PSsyst3} near the point $Q_4'$ has a matrix with eigenvalues
$$
\lambda_1=\frac{2m+(N-2)(m-1)}{2m}, \ \lambda_2=N-2, \ \lambda_3=\sigma+2,
$$
thus $Q_4'$ is an unstable node. The orbits going out of $Q_4'$ contain profiles with the properties $z\to0$ and $y\to(2-N)/m$, that readily give \eqref{beh.asympt} as $\xi\to0$, thus the orbits going out of $Q_5$ in Step 1 (when \eqref{interm17} was true) are identified with the orbits going out of $Q_4'$. Notice next that, if we look for profiles with behavior given by \eqref{beh.asympt}, that is, going out of $Q_4'$, we compute
$$
\frac{Z}{X}=\sqrt{\frac{m-1}{m}}\xi^{\sigma+1}f(\xi)^{(m-1)/2}\sim K\xi^{K(m,N,\sigma)/2m}, \ {\rm as} \ \xi\to0,
$$
with $K(m,N,\sigma)$ given in \eqref{interm17}. It thus follows that, when $K(m,N,\sigma)<0$ we get $Z/X\to+\infty$ as $\xi\to0$, and in the same way $Z/Y\to-\infty$. In this case the unstable node $Q_4'$ is identified with the unique point at infinity where $Z$ dominates over $X$ and $Y$ among the critical points of the system \eqref{PSsyst}, that is $Q_4$. We thus notice that, when the sign of $K(m,N,\sigma)$ changes, the orbits containing profiles with behavior given by \eqref{beh.asympt} \emph{immigrate from $Q_5$ to $Q_4$}.

\medskip

\noindent \textbf{Step 3. No other orbits go out of $Q_4$}. It remains the final question of whether there are other orbits \emph{specific to $Q_4$}, with a different behavior than \eqref{beh.asympt} and that are not seen in the change of variable \eqref{change3}. Such orbits are characterized by $Z\to\infty$ and $Z/X\to\infty$, together with $|Z/Y|\to\infty$. We go back then to Eq. \eqref{ODE} and notice first that
$$
\frac{\xi^{\sigma}f(\xi)^m}{f(\xi)}=Z(\xi)\to\infty,
$$
thus the term $f(\xi)/(m-1)$ is of lower order than the last term of \eqref{ODE}. Moreover, $Z(\xi)\to\infty$ implies that $f(\xi)\to\infty$, thus the orbits going out of $Q_4$ must contain profiles with a vertical asymptote. In particular, also $f^m$ has a vertical asymptote at $\xi=0$, which implies in particular that $\ln\,f^m$ has a vertical asymptote too, hence
$$
\frac{(f^m)'}{f^m}=(\ln\,f^m)'\to-\infty,
$$
and this gives that the term $(N-1)(f^m)'/\xi$ dominates over the term $\xi^{\sigma}f(\xi)^m$ in Eq. \eqref{ODE}. In a first order of approximation, we are thus left with the first two terms of Eq. \eqref{ODE} (the two ones involving derivatives), and an integration of the equation formed with them readily leads to the behavior \eqref{beh.asympt}. It thus follows that all the orbits going out of $Q_4$ contain profiles satisfying \eqref{beh.asympt}, which were already found as orbits going out of $Q_4'$ in the system \eqref{PSsyst3}.
\end{proof}
We conclude that, in fact, the limitation introduced by the sign of $K(m,N,\sigma)$ is purely technical and in fact the critical points $Q_5$ and $Q_4$ can be seen as a single one with respect to the analysis of the orbits going out of them. In order to simplify the notation, we \textbf{make the convention to call $Q_5$} throughout the paper this unstable node containing the profiles with behavior given by \eqref{beh.asympt}. We close this section with the analysis of the points $Q_1$ and $Q_5$ in dimension $N=2$.
\begin{lemma}[Local analysis of the critical point $Q_1=Q_5$ in dimension $N=2$]\label{lem.Q1Q5N2}
Let $N=2$. Then the critical points $Q_1$ and $Q_5$ coincide and the new critical point (that we relabel $Q_1$) is a saddle-node in the sense of the theory in \cite[Section 3.4]{GH}. There exists a two-dimensional unstable manifold on which the orbits go out tangent to the plane $\{Y=0\}$ and these orbits contain good profiles with $f(0)=A>0$, $f'(0)=0$, satisfying thus assumption (P1) in Definition \ref{def1}. All the rest of the orbits go out into the region $\{Y<0\}$ and contain profiles with the local behavior
\begin{equation}\label{beh.Q12}
f(\xi)\sim D\left(-\ln\,\xi\right)^{1/m}, \qquad {\rm as} \ \xi\to0, \ D>0 \ {\rm free \ constant}.
\end{equation}
\end{lemma}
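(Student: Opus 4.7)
The plan is to combine standard local bifurcation analysis at the degenerate point $Q_1=Q_5$ (now carrying a zero eigenvalue) with a direct asymptotic analysis of the ODE \eqref{ODE} to extract the logarithmic tail \eqref{beh.Q12}. Setting $N=2$ in the formula defining $Q_5$ gives $(1,0,0,0)=Q_1$, so the two points collide; geometrically $Q_5$ slides along the $y$-axis of the local chart used at $Q_1$ and reaches the origin exactly at $N=2$. Specializing the local system \eqref{systinf1} to $N=2$, the linearization at the origin is diagonal with eigenvalues $0$, $\sigma+1$ and $1$: this is the saddle-node framework of \cite[Section 3.4]{GH}, producing a two-dimensional unstable manifold $W^u$ tangent to the plane $\{y=0\}$ (i.e.\ to $\{Y=0\}$ in original variables) and a one-dimensional center manifold $W^c$ tangent to the $y$-axis. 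A convenient observation specific to $N=2$ is that both planes $\{z=0\}$ and $\{w=0\}$ are invariant for \eqref{systinf1}, so their intersection (the $y$-axis) is invariant and can be identified with $W^c$; plugging $z=w=0$ into the $y$-equation yields the exact reduced dynamics
\[
\dot y = -m y^{2}.
\]
This is the standard saddle-node normal form: orbits with $y>0$ are contracted to the origin (nothing new emerges on that side) while orbits with $y<0$ are expelled, producing exactly one extra outgoing half-branch beyond $W^u$.

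For orbits leaving $Q_1$ along $W^u$ I would reuse the argument of Lemma \ref{lem.Q1} almost verbatim: the proof there relies only on the tangency to $\{y=0\}$ together with the simultaneous decay $Y/X\to 0$ and $Y/Z\to 0$, and does not use the sign of the $y$-eigenvalue. The conclusion is that these orbits contain profiles of type (P1), i.e.\ $f(0)=A>0$ with $f'(0)=0$.

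The main step is the analysis on the center-unstable half-branch $\{y<0\}$, along which $Y/X\to 0^-$, $Z/X\to 0$ and $X\to\infty$. Since P1 profiles already account for all orbits on which $f$ remains bounded as $\xi\to 0^+$ (they sit on $W^u$), the profiles on this branch must satisfy $f(\xi)\to+\infty$. Setting $g=f^m$, equation \eqref{ODE} with $N=2$ takes the conservative form
\[
\bigl(\xi\, g'(\xi)\bigr)' \;=\; \frac{\xi}{m-1}\,g^{1/m}(\xi) \;-\; \xi^{\sigma+1}\, g(\xi).
\]
The a priori decay $Z\to 0$ translates into $\xi^{\sigma+1}g$ being integrable near $0$, and once a coarse upper bound $g=O(-\ln\xi)$ is established the term $\xi\, g^{1/m}$ is similarly integrable there; both terms on the right-hand side are then absolutely integrable near $\xi=0^+$. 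Consequently $\xi g'(\xi)$ has a finite limit $-C\in(-\infty,0)$, and a second integration gives $g(\xi)\sim C(-\ln\xi)$, i.e.\ \eqref{beh.Q12} with $D=C^{1/m}$. The free constant $D>0$ parametrizes the one-parameter family of outgoing orbits on the center-unstable branch, exactly matching the one degree of freedom left by center-manifold theory.

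The main obstacle I anticipate is closing this bootstrap rigorously. Center-manifold theory only tells us that $f(\xi)\to\infty$ as $\xi\to 0^+$; one has to upgrade this qualitative divergence into the sharp rate $f(\xi)=O((-\ln\xi)^{1/m})$ before the integrability of the conservative form above can be invoked. The delicate point is that for $N=2$ the two principal terms in $(f^m)''+\xi^{-1}(f^m)'$ exactly cancel at the leading $1/\xi^2$-order on the candidate ansatz, so the whole asymptotic is governed by the subleading balance, and one must check that this balance is not spoiled by the corrections produced by the reaction term $\xi^{\sigma}f^m$ and the zero-order term $f/(m-1)$. Once this is done, the behavior \eqref{beh.Q12} and the saddle-node phase portrait together give the statement of the lemma.
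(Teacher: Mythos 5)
Your identification of the saddle–node structure is exactly what the paper does: specializing \eqref{systinf1} to $N=2$ kills the linear term in the $\dot y$ equation, the eigenvalues become $0$, $\sigma+1$, $1$, the invariant $y$-axis gives the one-dimensional center manifold with reduced dynamics $\dot y=-my^2$, and the orbits on the two-dimensional unstable manifold tangent to $\{y=0\}$ are handled by the same argument as Lemma~\ref{lem.Q1}, yielding the (P1) profiles. That half of your proposal matches the paper's proof.

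The derivation of \eqref{beh.Q12}, however, has a genuine gap that you yourself flag and do not close. Two specific problems: first, the claim that ``the a priori decay $Z\to 0$ translates into $\xi^{\sigma+1}g$ being integrable near $0$'' is not correct on its own; $Z\to 0$ only gives $g=o(\xi^{-\sigma m/(m-1)})$, hence $\xi^{\sigma+1}g=o(\xi^{1-\sigma/(m-1)})$, which is integrable near $0$ only when $\sigma<2(m-1)$ — and the lemma carries no such restriction on $\sigma$. Second, as you note, the ``coarse upper bound $g=O(-\ln\xi)$'' is needed to make the other source term integrable, but that bound is essentially the conclusion, and you offer no independent way to obtain it; the bootstrap is therefore circular as written. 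Your final paragraph correctly identifies this as the sticking point.

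The paper sidesteps the bootstrap entirely by exploiting the tangency to the center direction more aggressively. On the center-unstable half-branch the orbit leaves tangent to the $y$-axis, so $|y/z|\to\infty$ and $|y/w|\to\infty$, and hence also $y/(zw)\to-\infty$ and $y/w^2\to-\infty$. Translating back to profiles via $y=Y/X$, $z=Z/X$, $w=1/X$ gives, without any a priori growth estimate on $f$,
\[
\frac{(f^m)'(\xi)/\xi}{\xi^{\sigma}f^m(\xi)}=\mathrm{const}\cdot\frac{y}{zw}\to-\infty,
\qquad
\frac{(f^m)'(\xi)/\xi}{f(\xi)/(m-1)}=\frac{y}{w^2}\to-\infty,
\]
so both the reaction term and the zero-order term are of strictly lower order than $(f^m)'/\xi$. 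The surviving balance is $(f^m)''+\xi^{-1}(f^m)'\sim 0$, i.e.\ $\bigl(\xi(f^m)'\bigr)'\sim 0$, whose integration gives $\xi(f^m)'\to -C$ with $C>0$ (the sign coming from $y<0$), and a second integration gives \eqref{beh.Q12} with the free constant $D=C^{1/m}$. This is essentially your conservative-form observation, but the phase-space asymptotics replace the missing a priori bound and render the argument complete. If you rewrite your last step to use the ratios $y/(zw)$ and $y/w^2$ obtained from the tangency to the $y$-axis rather than trying to bootstrap a bound on $g$, your proof closes cleanly.
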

\begin{proof}
We go back again to the system \eqref{systinf1} and notice that exactly for $N=2$, the linear term in the equation for $\dot{y}$ vanishes. In fact, a simple calculation with matrices and eigenvector $(1,0,0)$ gives that, if we let $\mu=2-N$, we are dealing at $\mu=0$ with a \emph{transcritical bifurcation} as in \cite{S73} (see also \cite[Section 3.4]{GH}). It follows that at $N=2$ the critical point $Q_1=Q_5$ is a saddle-node with a matrix having eigenvalues $\lambda_1=0$, $\lambda_2=\sigma+1$ and $\lambda_3=1$. The two-dimensional stable manifold tangent to the vector space spanned by the eigenvectors $e_2=(0,1,0)$ and $e_3=(0,0,1)$, that is $\{y=0\}$, contains orbits inherited from the critical point $Q_1$ and their analysis is totally similar as the one in Lemma \ref{lem.Q1}, leading to good profiles such that $f(0)>0$ and $f'(0)=0$. All the other orbits, according to the theory in \cite[Section 3.4]{GH}, go out tangent to the line $\{y=0\}$ (which is the direction of the eigenvector of the eigenvalue $\lambda_1=0$) in the variables of the system \eqref{systinf1}. This means that $|y/w|\to\infty$, $|y/z|\to\infty$ on these orbits when approaching the point $Q_1$, thus we infer from the first equation in \eqref{systinf1} that $\dot{y}\sim-my^2<0$, which gives that the orbits go out into the region $\{y<0\}$. But with respect to the initial variables  we have $y=Y/X$ and thus the region $\{y<0\}$ coincides with the region $\{Y<0\}$. This also implies that, without absolute values, $y/w\to-\infty$ and $y/x\to-\infty$ along these orbits. It remains to establish that the profiles contained in these orbits have a local behavior given by \eqref{beh.Q12}. To this end, we first notice that, in terms of profiles, we have
$$
y=\frac{Y}{X}=\frac{\xi f'(\xi)}{f(\xi)}, \ z=\frac{Z}{X}=\sqrt{\frac{m-1}{m}}\xi^{\sigma+1}f(\xi)^{(m-1)/2}, \ w=\frac{1}{X}=\frac{\xi f(\xi)^{-(m-1)/2}}{\sqrt{m(m-1)}}.
$$
The next plan is to go directly to Eq. \eqref{ODE} and show that in a neighborhood of the point $Q_1$ and along these orbits, the first two terms (the ones including derivatives) dominate over the last two and the local behavior is given in a first order approximation by the combination of them. We have
\begin{equation}\label{interm22}
\frac{(f^m)'(\xi)/\xi}{\xi^{\sigma}f(\xi)^m}=\frac{mf'(\xi)}{\xi^{\sigma+1}f(\xi)}=C(m)\frac{y}{zw}\to-\infty
\end{equation}
and
\begin{equation}\label{interm23}
\frac{(f^m)'(\xi)/\xi}{f(\xi)/(m-1)}=\frac{m(m-1)f(\xi)^{m-1}y}{\xi^2}=\frac{y}{w^2}\to-\infty.
\end{equation}
We next infer from \eqref{interm22} and \eqref{interm23} that the last two terms in Eq. \eqref{ODE} are of lower order with respect to the term $(f^m)'(\xi)/\xi$, which is easy to see that is of the same order as the first one. Thus, the local behavior of the profiles contained in the orbits going out tangent to $\{y=0\}$ in a small neighborhood of $Q_1$ is given in first approximation by the joint effect of the first two terms in \eqref{ODE}, that is
$$
(f^m)''(\xi)+\frac{1}{\xi}(f^m)'(\xi)\sim0,
$$
which leads to \eqref{beh.Q12} by a direct integration. It is easy to check that this behavior is taken as $\xi\to0$ by discarding the other possibilities $\xi\to\xi_0\in(0,\infty)$ or $\xi\to\infty$ in the line of similar proofs done in the previous Lemmas, we omit here the details.
\end{proof}
We are now ready to enter the global analysis of the system.

\section{Analysis in the invariant plane $\{Z=0\}$}\label{sec.Z0}

This shorter section presents a few technical results that will be very useful in the forthcoming global analysis. Before going to the study of the whole phase space associated to the system \eqref{PSsyst}, we restrict ourselves to the plane $\{Z=0\}$ and establish how the connections go inside this plane. Such orbits do not contain profiles but they are usually lower limits of the manifolds in the three-dimensional phase space and we have to understand where these connections go. The reduced system in the plane $\{Z=0\}$ writes
\begin{equation}\label{syst.Z0}
\left\{\begin{array}{ll}\dot{X}=\frac{m-1}{2}XY-X^2,\\
\dot{Y}=-\frac{m+1}{2}Y^2+1-(N-1)XY,\end{array}\right.
\end{equation}
with the same critical points as in Sections \ref{sec.local} and \ref{sec.infty} except for $Q_4$. Also notice here that the analysis performed in Lemma \ref{lem.P2} gives that $P_2$ is an attractor in the plane $\{Z=0\}$, while $P_0$ and $P_1$ are saddle points. Moreover, $Q_1$ is also a saddle point and $Q_5$ is always an unstable node in the plane $\{Z=0\}$ provided $N\geq3$, as it follows from Lemma \ref{lem.Q1} and by analyzing the two-dimensional unstable manifold in Lemma \ref{lem.Q4Q5}. We then have
\begin{lemma}\label{lem.P1Z0}
The unique orbit entering the critical point $P_1$ in the invariant plane $\{Z=0\}$ comes from the node $Q_5$.
\end{lemma}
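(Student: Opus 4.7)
The plan is to trace the unique orbit in the stable manifold of $P_1$ restricted to the invariant plane $\{Z=0\}$ backward in time, and to identify its $\alpha$-limit as the unstable node $Q_5$ on the equator of the Poincar\'e compactification.

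First, I would linearize the reduced system \eqref{syst.Z0} at $P_1=(0,-h_0)$. Reading off from the matrix $M$ written in the proof of Lemma \ref{lem.P0P1}, one sees that within $\{Z=0\}$ the critical point $P_1$ is a planar saddle whose stable eigenvalue is $-(m-1)h_0/2$ and whose stable eigenvector is proportional to $\bigl(1,-\tfrac{2(N-1)}{3m+1}\bigr)$. For $N\geq 2$ this vector points into the region $R:=\{X>0,\ Y<-h_0\}$, so the unique branch of the planar stable manifold entering $P_1$ from the physical half-space $\{X>0\}$ starts, close to $P_1$, inside $R$.

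Next, I would trap the backward trajectory in $R$ and force $X$ to blow up. The $Y$-axis $\{X=0\}$ is invariant, and on the boundary line $\{Y=-h_0,\ X>0\}$ one computes $\dot Y=(N-1)h_0 X>0$, so no orbit in $R$ can escape upward through the top boundary going backward. Inside $R$ the first equation of \eqref{syst.Z0} reads $\dot X=X\bigl[\tfrac{m-1}{2}Y-X\bigr]<0$, so $X$ is strictly decreasing forward; in particular no periodic orbit can be contained in $R$. Since $P_1$ is the $\omega$-limit (not the $\alpha$-limit) of our orbit and is the only finite critical point of \eqref{syst.Z0} in $\overline{R}$, the Poincar\'e--Bendixson theorem forces $X(t)\to+\infty$ as $t\to-\infty$.

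Third, to identify the equator point reached, I would analyse the ratio $y:=Y/X$, which satisfies (after a short computation using \eqref{syst.Z0})
\begin{equation*}
\dot y=-Xy(my+N-2)+\frac{1}{X}.
\end{equation*}
The leading term has the two asymptotic fixed points $y=0$ (corresponding to $Q_1$) and $y=(2-N)/m$ (corresponding to $Q_5$). Linearizing the leading term around each, one finds that $y=(2-N)/m$ is forward-repelling from both sides, hence backward-attracting, while $y=0$ is forward-attracting. Since close to $P_1$ we have $Y\to -h_0\neq 0$ and $X\to 0^+$, the ratio $y$ tends to $-\infty$ forward on our orbit; thus the backward trajectory lives in the basin $\{y<(2-N)/m\}$, $y$ increases monotonically backward, and its only possible limit is $(2-N)/m$. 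Combined with $X\to+\infty$, this gives $Y/X\to(2-N)/m$ along the trajectory, which is precisely the direction of $Q_5$ in the Poincar\'e compactification. The two other candidate points in the closure of $R$ on the equator are easily discarded: $Q_3$ is a stable node, from which no orbit can emanate, and the unstable manifold of $Q_1$ in $\{Z=0\}$ is tangent to the direction $w=1/X$, which forces $y\to 0$ and is incompatible with our orbit staying in $\{y<(2-N)/m<0\}$.

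The main technical obstacle is the rigorous control of the last step, because as $t\to-\infty$ both $X\to\infty$ and $\delta:=(2-N)/m-y\to 0^+$ at \emph{a priori} unknown rates, and one has to dominate the lower-order term $1/X$ in $\dot y$ by the leading $X(N-2)\delta$. The clean way to handle this is to pass to the compactified variables $(y,w)=(Y/X,\,1/X)$ used in \eqref{systinf1} restricted to $\{z=0\}$: in those coordinates $Q_5$ appears as a genuine unstable planar node (both eigenvalues strictly positive for $N\geq 3$, as recorded at the start of Section \ref{sec.Z0} and in Step~1 of the proof of Lemma \ref{lem.Q4Q5}), so its unstable manifold fills an open neighborhood. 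Once the backward trajectory enters this neighborhood --- which is guaranteed by the trapping and monotonicity arguments above --- it must converge to $Q_5$. Uniqueness of the resulting heteroclinic connection is then immediate from the one-dimensionality of the branch of the stable manifold of $P_1$ used in the first step.
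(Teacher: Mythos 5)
Your proof is correct and takes essentially the same route as the paper's: identify the stable eigendirection of $P_1$ inside $\{Z=0\}$, use the flow across the line $\{Y=-h_0\}$ to trap the backward orbit in $\{X>0,\ Y<-h_0\}$, and conclude it must emanate from $Q_5$ (the paper is terse on this last step, and your Poincar\'e--Bendixson argument together with the explicit elimination of $Q_1$ and $Q_3$ genuinely fills that gap). One small caveat: the intermediate assertion that $y=Y/X$ increases monotonically backward is not actually justified --- in $\{y<(2-N)/m\}$ the sign of $\dot y=-Xy(my+N-2)+1/X$ is not determined, since the leading term is negative there while $1/X>0$ --- but it is also unnecessary, because the backward invariance of $\{y<(2-N)/m\}$ already follows from $\dot y=1/X>0$ on the line $\{my+N-2=0\}$, and that is all you need to discard $Q_1$ before invoking the nodal structure of $Q_5$.
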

\begin{proof}
The linearization of the system \eqref{syst.Z0} near the point $P_1$ has the matrix
$$
M_0(P_1)=\left(
           \begin{array}{cc}
             -\frac{m-1}{2}h_0 & 0 \\
             (N-1)h_0 & (m+1)h_0 \\
           \end{array}
         \right),
$$
thus the unique orbit entering the saddle point $P_1$ enters tangent to the eigenvector $((3m+1)/2,-(N-1))$ corresponding to the negative eigenvalue of $M_0(P_1)$. Since $-(N-1)\leq-1<-h_0$, it follows that the orbit enters from the region $\{Y<-h_0\}$. We next consider the line $\{Y=-h_0\}$ and the flow of the system \eqref{syst.Z0} on this line is given by the sign of the expression $(N-1)h_0X$, which is positive. This proves that the line $\{Y=-h_0\}$ cannot be crossed from right to left, thus the orbit stays forever in the region $\{Y<-h_0\}$ and it must come from $Q_5$.
\end{proof}
We will now inspect the (also unique) orbits going out of $P_0$ and $Q_1$, which are also saddle points when restricted to the plane $\{Z=0\}$. We will prove below that they connect to the attractor $P_2=(X(P_2),Y(P_2))$ with $X(P_2)$, $Y(P_2)$ defined in \eqref{xp2}.
\begin{lemma}\label{lem.P0Q1Z0}
The unique orbit going out of $P_0$ inside the invariant plane $\{Z=0\}$ enters the critical point $P_2$. The same holds true for the unique orbit going out of $Q_1$.
\end{lemma}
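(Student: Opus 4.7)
The plan is to apply the Poincar\'e--Bendixson theorem to the two-dimensional system \eqref{syst.Z0}, after trapping both outgoing orbits in a compact region of the open first quadrant $\{X>0,\,Y>0\}$ and excluding periodic orbits there by Bendixson's criterion. The target is to show that in each case the $\omega$-limit must coincide with the sink $P_2$.

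First I would pin down the initial direction of each outgoing orbit. For $P_0$, the eigenvector of the Jacobian of \eqref{syst.Z0} associated with the positive eigenvalue $(m-1)h_0/2$ is proportional to $(3m+1,-2(N-1))$, so the orbit immediately enters the strip $\{X>0,\,0<Y<h_0\}$. For $Q_1$, in the local chart \eqref{systinf1} restricted to $\{z=0\}$ the unstable manifold is tangent to the $w$-axis, and pushing the expansion to second order produces $y\sim w^2/N$, hence the outgoing orbit satisfies $Y=Xy>0$ near infinity. This second-order sign computation at $Q_1$ is the main technical obstacle: without it the leading-order picture would leave open the possibility that the orbit escapes into $\{Y<0\}$, where the sign of the divergence is no longer definite and a different argument would be required.

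Next I would construct the trapping region. The plane $\{X=0\}$ is invariant by inspection. On $\{Y=0,\,X>0\}$ one has $\dot Y=1>0$, blocking escape downward; on $\{Y=h_0,\,X>0\}$, using $h_0^2=2/(m+1)$, one computes $\dot Y=-(N-1)h_0 X<0$, blocking escape upward. Finally, $\dot X=X\bigl(\tfrac{m-1}{2}Y-X\bigr)$ is strictly negative on $\{X>(m-1)h_0/2\}$, so $X$ is eventually confined to $[0,(m-1)h_0/2]$. Thus, for large $\eta$, both orbits lie in the compact rectangle $R=[0,(m-1)h_0/2]\times[0,h_0]$.

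Finally I would conclude via Poincar\'e--Bendixson. The divergence of the vector field in \eqref{syst.Z0} equals $-\tfrac{m+3}{2}Y-(N+1)X$, strictly negative on the open first quadrant, so no periodic orbit lies there. The $\omega$-limit of each orbit must therefore be a fixed point or a graph of saddle connections, and the only fixed points in $R$ are $P_0$ and $P_2$. By the stable manifold theorem applied to the hyperbolic saddle $P_0$, together with the invariance of $\{X=0\}$, the stable manifold of $P_0$ coincides with a piece of the $Y$-axis; no orbit with $X>0$ can converge to $P_0$, which also excludes any homoclinic or heteroclinic graph containing $P_0$ in its closure. The $\omega$-limit is therefore the sink $P_2$, as claimed.
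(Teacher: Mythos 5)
Your proof is correct and follows essentially the same strategy as the paper: trap the two outgoing orbits in the rectangle $[0,(m-1)h_0/2]\times[0,h_0]$ using the flow directions on its sides, rule out cycles via Bendixson, and conclude by Poincar\'e--Bendixson. You actually supply a couple of details the paper glosses over (the explicit second-order expansion $y\sim w^2/N$ of the unstable manifold at $Q_1$, and the correct divergence $-\tfrac{m+3}{2}Y-(N+1)X$, whereas the paper's displayed expression has a harmless typo with $(N-1)X$ in the second term).
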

\begin{proof}
\textbf{Connection $P_0-P_2$}. We consider the curves (isoclines of the system \eqref{syst.Z0})
$$
C_1: \ Y=\frac{2}{m-1}X, \qquad C_2: \ -\frac{m+1}{2}Y^2+1-(N-1)XY=0,
$$
the first one being the line connecting the origin $(0,0)$ to $P_2$, while the second passes through both $P_0$ and $P_2$. The intersection between the line $C_1$ and the horizontal line $\{Y=h_0\}$ passing through $P_0$ is given by $X=(m-1)h_0/2$. Let us thus consider the rectangle limited by the $X$ and $Y$ axis, the line $\{Y=h_0\}$ and the line $\{X=(m-1)h_0/2\}$. It is easy to check that $Y(P_2)<h_0$ and $X(P_2)<(m-1)h_0/2$ (the equality in both being attained for $N=1$), thus the point $P_2$ lies inside the rectangle. By calculating the directions of the flow of the system \eqref{syst.Z0} on the sides of this rectangle, we find that any orbit entering the rectangle must remain there forever. It is easy to check that the orbit going out of $P_0$ enters the rectangle. Moreover, if we call $F(X,Y)$ the vector field of the system \eqref{syst.Z0}, we have
$$
{\rm div}\,F(X,Y)=-\frac{m+3}{2}Y-(N-1)X<0
$$
inside the rectangle, and Bendixon's criteria \cite[Theorem 1, Section 3.9]{Pe} ensures that no limit cycles can exist inside the rectangle. We then infer from the Poincar\'e-Bendixon's theorem \cite[Section 3.7]{Pe} that the orbit from $P_0$ has to enter a critical point inside the rectangle, and the only possible point is $P_2$, as claimed.

\medskip

\noindent \textbf{Connection $Q_1-P_2$}. It is not difficult to see that the orbit going out of $Q_1$ goes out tangent to the line $\{Y=0\}$ and enters the region $\{Y>0\}$. Thus, it follows that the orbit enters the strip $\{0<Y<h_0\}$ from which it cannot go out. Moreover, using the first equation of the system \eqref{syst.Z0}, we infer that
$$
\dot{X}=X\left(\frac{(m-1)}{2}Y-X\right)<0,
$$
while $Y<h_0$ and $X>(m-1)h_0/2$. This easily proves that the orbit going out of $Q_1$ must enter first the rectangle considered in the previous step and then $P_2$.
\end{proof}
After these preparatory sections, we are in a position to start proving our main results.

\section{Existence of good profiles for $\sigma\in(0,\sigma_c)$}\label{sec.exist}

This section is devoted to the proof of Theorem \ref{th.exist.N4} and also to the existence part in Theorem \ref{th.exist.N23} for the same range of $\sigma$. The proof is involved and technical and follows a number of steps, being in the end based on the "three sets" argument. The main idea is to follow the two-dimensional manifold entering $P_1$. As a convention of notation, we will identify throughout the next sections surfaces in the phase space with the functions whose graph they represent (for example $Z(X,H)$, $Z(X,Y)$ etc.).
\begin{proof}[Proof of Theorem \ref{th.exist.N4}, existence part]
\textbf{Step 1.} In a first step, we show that the orbits entering $P_1$ and lying very close to the plane $\{Z=0\}$ come from the unstable node conventionally named $Q_5$ at the end of Lemma \ref{lem.Q4Q5}. Indeed, Lemma \ref{lem.P1Z0} together with the fact that $Q_5$ is an unstable node (at least in the case when $K(m,N,\sigma)\geq0$ in \eqref{interm17}) give by a standard continuity argument that there are more connections entering $P_1$ and coming from $Q_5$. In the case when $K(m,N,\sigma)<0$, there are orbits entering $P_1$ as close as possible (in a tubular neighborhood) to the one entering $P_1$ and coming from $Q_5$ inside the plane $\{Z=0\}$. But then $Q_5$ acts as a saddle point and rejects the other connections, that will actually come from $Q_4$; the latter follows easily from Lemma \ref{lem.Q4Q5} where we identify $Q_5$ and $Q_4$ with a single unstable node for any value of $\sigma$. The interested reader can rather easily check that the profiles with an interface at $\xi_0\in(0,\epsilon)$ for $\epsilon>0$ sufficiently small are contained in these orbits, by using the relation \eqref{interm9}, but we omit the details, since we will not use this fact later.

\medskip

\noindent \textbf{Step 2. Approximating the stable manifold and constructing a separatrix surface.} Since we want to better analyze the two-dimensional stable manifold of $P_1$, we translate this point at the origin by setting $H=Y+h_0$, where we recall that $h_0=\sqrt{2/(m+1)}<1$. The system \eqref{PSsyst} becomes in these variables
\begin{equation}\label{PSsyst2}
\left\{\begin{array}{ll}\dot{X}=\frac{m-1}{2}XH-\frac{m-1}{2}Xh_0-X^2,\\
\dot{H}=-\frac{m+1}{2}H^2+(m+1)h_0H-Z-(N-1)XH+(N-1)h_0X,\\
\dot{Z}=Z[(m-1)H-(m-1)h_0+\sigma X],\end{array}\right.
\end{equation}
We know that in a neighborhood of the point $P_1$ we have a two-dimensional stable manifold of the form $Z=f(X,H)$ and we look for a local approximation of this manifold up to order two. The algorithm for this local approximation of the stable manifold is justified by the general theory given, for example, in the book \cite[Section 2.7, p. 79-82]{Sh1}. We thus approximate $Z=f(X,H)$ by its Taylor development up to second order near the origin by letting
\begin{equation}\label{manif.P1}
Z_1(X,H)=AX+BH+CX^2+DH^2+EXH+O(|(X,H)|^3)
\end{equation}
with coefficients $A$, $B$, $C$, $D$, $E$ to be determined. Calculating
$$
\dot{Z}=A\dot{X}+2CX\dot{X}+E\dot{X}H+B\dot{H}+2DH\dot{H}+EX\dot{H},
$$
then replacing the expressions of $\dot{Z}$, $\dot{X}$ and $\dot{H}$ with the right hand sides of the equations in the system \eqref{PSsyst2} and identifying the coefficients to similar terms, we obtain
\begin{equation}\label{interm19}
\begin{split}
&A=\frac{4m(N-1)h_0}{3m+1}, \ \ B=2h_0m, \ \ C=\frac{2(N-1)(mN-4m\sigma-6m-N+2}{(3m+1)(5m-1)},\\
&D=-m, \ \ E=-\frac{4m(N+\sigma-1)}{5m-1}.
\end{split}
\end{equation}
With these coefficients in mind, we introduce the following surface, that will be our main tool to separate and drive connections in the phase space:
\begin{equation}\label{surface}
\begin{split}
Z(X,H)&=-mH^2-\frac{(\sigma+2)(2N+\sigma-2)}{8m}X^2-\left(N-1+\frac{\sigma}{2}\right)XH\\
&+\frac{(2N+\sigma-2)h_0}{2}X+2mh_0H,
\end{split}
\end{equation}
describing over the plane $\{X=0\}$ the curve $Z=-mH^2+2mh_0H$, which is nothing else than the unique curve from the family \eqref{cycles} corresponding to $K=0$, that has been used decisively in the study of the analogous problem in dimension $N=1$. As we see here, the surface \eqref{surface} is quite far from being a cylinder (as we had in dimension $N=1$) but still has the same trace on the plane $\{X=0\}$. Moreover, let us also remark that this surface contains the points $P_0$ and $P_1$. The normal to this surface is given by
\begin{equation}\label{normal.surface}
\begin{split}
\overline{n}=&\left(-\frac{(\sigma+2)(2N+\sigma-2)}{4m}X-\frac{2N+\sigma-2}{2}H+\frac{2N+\sigma-2}{2}h_0,\right.\\
&\left.-2mH-\frac{2N+\sigma-2}{2}X+2mh_0,-1\right),
\end{split}
\end{equation}
and the flow of the system \eqref{PSsyst2} on the surface \eqref{surface} is given by the sign of the expression obtained, at the points of the surface, as the scalar product between $\overline{n}$ and the vector field of the system \eqref{PSsyst2}. After some rather tedious but direct calculations, we find that the flow is given by the sign of the following expression, that will be used throughout the paper:
\begin{equation}\label{flow.surface}
F(X)=-\frac{K_1(\sigma)}{2(m+1)}X-\frac{(2N-\sigma-6)(2N+\sigma-2)(\sigma+2)}{16m}X^3,
\end{equation}
where we recall that $K_1(\sigma)$ is defined in \eqref{coef}. Since $\sigma\in(0,\sigma_c)$, the coefficient of $X$ in \eqref{flow.surface} is positive, thus there exists a region very close to the plane $\{X=0\}$ where $F(X)\geq0$. Since the normal vector $\overline{n}$ points in negative direction in the third component, this gives that in such a region with $X$ sufficiently small, the surface \eqref{surface} cannot be crossed by connections "from below to above", that is, going from the region where $Z(X,H)<0$ in \eqref{surface} to the region where $Z(X,H)>0$.

\medskip

\noindent \textbf{Step 3. Connections to $P_1$ and from $P_0$.} We show that the orbits entering $P_1$ on its stable manifold and with $X>0$ very small, connect to $P_1$ from the region where $Z(X,H)>0$ with respect to the separatrix surface given by \eqref{surface}. To prove this claim, we calculate the difference between the approximation of the stable manifold entering $P_1$ obtained in \eqref{manif.P1} and the surface $Z(X,H)$. We have
\begin{equation}\label{dif.P1}
Z_1(X,H)-Z(X,H)=-\frac{K_1(\sigma)}{2}X\left[\frac{h_0}{3m+1}+\frac{H}{5m-1}+\frac{8mN-5m\sigma-18m+\sigma+2}{m(3m+1)(5m-1)}X\right],
\end{equation}
which is positive provided $X>0$ is sufficiently small, since for $\sigma\in(0,\sigma_c)$ we have $K_1(\sigma)<0$. We then infer from Step 2 above that an orbit entering $P_1$ on the stable manifold in a sufficiently small region $0<X<X_0$ such that $F(X)>0$ and $Z_1(X,H)-Z(X,H)>0$, cannot cross the surface $Z(X,H)$ in \eqref{surface} and has to stay above it (that is $Z_1(X,H)>Z(X,H)$) for any $H\in(0,2h_0)$, where we choose $2h_0$ since it is the $H$-component of the critical point $P_0=(0,2h_0,0)$ in the system \eqref{PSsyst2}. By performing totally similar calculations following again the theory in \cite[Section 2.7, p. 79-82]{Sh1}, we can also approximate the two-dimensional unstable manifold going out of $P_0$. To this end, we introduce the new variable $\overline{H}=Y-h_0$ and obtain a rather similar Taylor approximation up to order two of this unstable manifold, namely
\begin{equation}\label{manif.P0}
Z_0(X,\overline{H})=-AX-B\overline{H}+CX^2+D\overline{H}^2+EX\overline{H},
\end{equation}
where $A$, $B$, $C$, $D$, $E$ are the coefficients introduced in \eqref{interm19}. By changing $Z(X,H)$ into variables $(X,\overline{H})$, we can again calculate the difference between the Taylor approximation of the unstable manifold going out of $P_0$ and the separatrix surface \eqref{surface} to find
\begin{equation}\label{dif.P0}
Z_0(X,\overline{H})-Z(X,\overline{H})=-\frac{K_1(\sigma)}{2}X\left[-\frac{h_0}{3m+1}+\frac{\overline{H}}{5m-1}+\frac{8mN-5m\sigma-18m+\sigma+2}{m(3m+1)(5m-1)}X\right],
\end{equation}
which is negative provided $X>0$ is sufficiently small. It thus follows that there exists a fixed right-neighborhood of the plane $\{X=0\}$, in the form of a strip
$$
S:=\{0<X<\overline{X}(m,N,\sigma), -h_0\leq Y\leq h_0\},
$$
where $\overline{X}(m,N,\sigma)$ only depends on $m$, $N$, $\sigma$, such that the orbits entering $P_1$, respectively going out of $P_0$ on their two-dimensional manifolds inside the strip $S$ are completely separated by the surface \eqref{surface} and cannot connect.

\medskip

\noindent \textbf{Step 4. Connections between $Q_2$ and $P_1$.} We conclude from the previous analysis that the orbits entering $P_1$ from inside the strip $S$ have to cross the plane $\{Y=h_0\}$ at points with $Z>0$. Since the direction of the flow of the system \eqref{PSsyst} on the plane $\{Y=h_0\}$ is given by the sign of the expression
$$
-Z-(N-1)h_0X<0,
$$
it follows that these orbits come from the region $\{Y>h_0\}$. We also notice that, on orbits of the system \eqref{PSsyst} inside the region $\{Y>h_0\}$ we have
$$
\dot{Z}=Z[(m-1)Y+\sigma X]\geq0, \qquad \dot{Y}=1-\frac{m+1}{2}Y^2-Z-(N-1)XY<0,
$$
thus components $X$ and $Y$ are monotonic on these orbits. Such orbits cannot thus come from an $\alpha$-limit cycle and they have to come from a critical point inside the region $\{Y>h_0\}$, and the only such point is $Q_2$.

\medskip

\noindent \textbf{Step 5. The three sets argument.} Gathering the previous results, we have shown that in the two-dimensional manifold entering $P_1$ there are orbits coming from the unstable node $Q_5$ (with the convention of labeling made at the end of Lemma \ref{lem.Q4Q5}) and other orbits coming from the unstable node $Q_2$. At a formal level, since both $Q_5$ and $Q_2$ are nodes, the orbits coming from them are relatively open sets in the relative topology on the two-dimensional stable manifold of $P_1$, thus there must be a non-empty relatively closed set with orbits entering $P_1$ and coming from other critical points or $\alpha$-limits. This formal plan is made rigorous by a three-sets argument. Starting from the system \eqref{PSsyst2}, we keep only the first order approximations in the first and third equation and obtain in a neighborhood of $P_1$ that
$$
\frac{dZ}{dX}\sim\frac{(m-1)h_0Z}{(m-1)h_0X/2}=\frac{2Z}{X},
$$
whence we find by integration that the orbits entering $P_1$ on its two-dimensional stable manifold enter tangent to the beam of curves $Z=DX^2+o(X^2)$ with $D\in(0,\infty)$. Doing the same with the second equation and keeping only the first order terms, we further get
$$
\frac{dH}{dX}\sim-\frac{2(m+1)}{m-1}\frac{H}{X}-\frac{2(N-1)}{m-1},
$$
hence (taking into account that $H$, $X\to0$) we get the particular solution
$$
H=-\frac{2(N-1)}{3m+1}X+o(X).
$$
This latter equation, together with the one-parameter family $Z=DX^2+o(X^2)$, characterize the two-dimensional manifold entering $P_1$. Introduce then the following three sets:
\begin{equation*}
\begin{split}
&A=\{D\in(0,\infty): {\rm the \ orbit \ tangent \ to} \ Z=DX^2 \ {\rm comes \ from} \ Q_2\}, \\
&B=\{D\in(0,\infty): {\rm the \ orbit \ tangent \ to} \ Z=DX^2 \ {\rm does \ not \ come \ from} \ Q_5 \ {\rm or} \ Q_2 \}, \\
&C=\{D\in(0,\infty): {\rm the \ orbit \ tangent \ to} \ Z=DX^2 \ {\rm comes \ from} \ Q_5\}.
\end{split}
\end{equation*}
On the one hand, we infer from Step 4 that the set $A$ is non-empty and in fact contains an interval of the form $(D^*,\infty)$, since the orbits inside the strip $S$ will have $X>0$ as small as possible for a fixed $Z$, thus correspond to very large parameters $D$. On the other hand, we deduce from Step 1 that the set $C$ is non-empty and contains an interval of the form $(0,D_*)$, since the orbits lying in a tubular neighborhood of the unique orbit entering $P_1$ inside the plane $\{Z=0\}$ have very low component $Z$ for $X$ fixed and thus correspond to very small parameters $D>0$. Moreover, sets $A$ and $C$ are open, since $Q_2$ and $Q_5$ are unstable nodes. It then follows that the set $B$ is non-empty and closed, hence there are some orbits entering $P_1$ and coming from other critical points.

\medskip

\noindent \textbf{Step 6. End of the proof.} The connections corresponding to parameters $D\in B$ can be in one of the following four cases, as it follows from the local analysis in Sections \ref{sec.local} and \ref{sec.infty}:

$\bullet$ Orbits starting from $P_0$: they contain good profiles satisfying assumption (P3) in Definition \ref{def1}

$\bullet$ Orbits starting from $P_2$: they contain good profiles satisfying assumption (P2) in Definition \ref{def1}

$\bullet$ Orbits starting from $Q_1$: they contain good profiles satisfying assumption (P1) in Definition \ref{def1}

$\bullet$ Orbits coming from an $\alpha$-limit. We will rule out this possibility by using \cite[Theorem 1, Section 3.2]{Pe}, which gives that any $\alpha$-limit should be a compact set in the phase space, thus bounded both in $X$ and $Z$ components. We thus obtain that there exist $K_1$, $K_2>0$ such that
$$
\xi^{\sigma}f(\xi)^{m-1}\leq K_1, \qquad f(\xi)^{(m-1)/2}(\xi)\leq K_2\xi,
$$
which in particular gives that both $\xi$ and $f(\xi)$ remain bounded along the profile contained in such trajectory. Moreover, such a profile starting from a compact $\alpha$-limit present infinitely many oscillations, thus there exist an infinity of maxima and minima to $f(\xi)$, from which we can extract convergent sequences of minima, respectively maxima
$$
\xi_{n,\min}\to\xi_{\min}, \qquad \xi_{n,\max}\to\xi_{\max}\in(0,\infty),
$$
such that their terms are alternated (that is, a maximum point lies between two minimum points and viceversa). We then deduce that there exist points
$$
\xi_n\in(\xi_{n,\max},\xi_{n,\min}), \qquad f''(\xi_n)=0, \ \xi_n\to\xi_0,
$$
where the latter is obtained by eventually restricting ourselves to a subsequence, relabeled in the same way for simplicity. Replacing in the equation \eqref{ODE} we get
$$
\lim\limits_{n\to\infty}\frac{N-1}{\xi_n}(f^m)'(\xi_n)=\frac{1}{m-1}f(\xi_0)-\xi_0^{\sigma}f^m(\xi_0)\in\real.
$$
But it is easy to see that, if
$$
\xi_{\min}=\lim\limits_{n\to\infty}\xi_{n,\min}\neq\lim\limits_{n\to\infty}\xi_{n,\max}=\xi_{\max},
$$
then $(f^m)'(\xi_n)\to\pm\infty$ (the sign depending on whether the function oscillates from minima to maxima or viceversa when passing through $\xi_n$), which is a contradiction unless $\xi_0=+\infty$, which is not the case. It thus follows that the $\alpha$-limit should be reduced to a point to which the oscillations converge (being damped), thus a critical point. Since from all the remaining critical points $Q_1$, $P_0$ and $P_2$ start orbits containing good profiles, the proof is complete.
\end{proof}

\noindent \textbf{Remark.} An inspection of the previous proof shows that the dimension is irrelevant, thus the same proof gives the existence of good profiles with interface also for dimensions $N=2$ and $N=3$ when $\sigma\in(0,\sigma_c)$. This will be used later.

\section{Classification of profiles for $N\geq4$}\label{sec.class}

This is the longest and most technical chapter of this paper, which is devoted to the classification of the orbits entering $P_1$, with respect to their points of origin (among $Q_1$, $P_0$ and $P_2$, as it comes out from the proof in Section \ref{sec.exist}) for dimension $N\geq4$ and $\sigma\in(0,\sigma_c)$. The goal of this section is to prove Theorem \ref{th.class.N4}. Before starting the proof, we state a short but useful
\begin{lemma}\label{lem.cycles}
All the possible $\omega$-limit sets of the system \eqref{PSsyst} that are not a single point are included in the plane $\{X=0\}$.
\end{lemma}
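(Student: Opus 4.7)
The plan is to exhibit a strict Lyapunov function on the open region $\{X>0,Z>0\}$ and use it, together with the invariance of the planes $\{X=0\}$ and $\{Z=0\}$, to rule out any point of a non-trivial $\omega$-limit set lying outside $\{X=0\}$. First I would set $V(X,Y,Z):=\ln Z-2\ln X$ on $\{X>0,Z>0\}$ and compute, directly from \eqref{PSsyst},
\begin{equation*}
\dot V=\frac{\dot Z}{Z}-2\frac{\dot X}{X}=[(m-1)Y+\sigma X]-2\left[\frac{m-1}{2}Y-X\right]=(\sigma+2)X>0,
\end{equation*}
so $V$ is \emph{strictly} increasing along every orbit contained in $\{X>0,Z>0\}$.

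Assuming $L=\omega(\gamma_0)$ is a non-single-point $\omega$-limit set and, for contradiction, $L\not\subset\{X=0\}$, I would pick $p\in L$ with $X(p)>0$ and derive a contradiction from such $p$. When $\gamma_0$ itself lies in $\{X>0,Z>0\}$, monotonicity of $V$ along $\gamma_0$ gives $V(\gamma_0(t))\to V_\infty\in(V(\gamma_0(0)),+\infty]$. If $Z(p)>0$ then continuity of $V$ at $p$ forces $V(p)=V_\infty<\infty$; but the orbit $\gamma_p\subset L$ stays in $\{X>0,Z>0\}$ by invariance, so $V(\gamma_p(s))>V_\infty$ for $s>0$, whereas applying the same continuity argument to $\gamma_p(s)\in L$ gives $V(\gamma_p(s))=V_\infty$, a contradiction. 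If instead $Z(p)=0$ while $X(p)>0$, any sequence $\gamma_0(t_n)\to p$ satisfies $V(\gamma_0(t_n))\to-\infty$ (since $X$ stays bounded away from $0$ near $p$ while $Z\to 0$), contradicting the lower bound $V(\gamma_0(t))\geq V(\gamma_0(0))$.

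The remaining initial position to address is $\gamma_0\subset\{Z=0,X>0\}$, where the dynamics reduce to the planar system \eqref{syst.Z0}. Here I would invoke Dulac's criterion with the Dulac function $B(X,Y)=X^{(m+3)/(m-1)}$, the exponent being chosen so as to cancel the $Y$-coefficient in $\operatorname{div}(BF)$; the remaining divergence becomes strictly negative throughout $\{X>0\}$, excluding periodic orbits in this half-plane. Combined with the heteroclinic picture of Section~\ref{sec.Z0} (every trajectory in $\{X>0,Z=0\}$ converges to one of $P_1$ or $P_2$, and there is no heteroclinic cycle among these equilibria), the Poincar\'e--Bendixson theorem then forces $L$ to be a single equilibrium, contradicting the hypothesis that $L$ has more than one point. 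The main obstacle, in my view, is the delicate sub-case $Z(p)=0$, $X(p)>0$: here the Lyapunov function itself diverges to $-\infty$ at $p$, so the contradiction must be extracted from its strict monotonicity along $\gamma_0$ rather than from continuity of $V$ at $p$.
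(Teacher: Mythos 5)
Your argument is correct, and it follows a genuinely different route than the paper. The paper's proof is very terse: it invokes the argument of Step~6 of the existence proof in Section~\ref{sec.exist}, where ruling out compact interior $\alpha$-limits is done by reverting to the profile equation \eqref{ODE} and showing that the oscillations of the corresponding profile would have to be damped; this is then applied (via invariance of $\omega$-limit sets) to conclude that non-singleton $\omega$-limits lie in $\{X=0\}\cup\{Z=0\}$, and closed orbits in $\{Z=0\}$ are excluded by observing that a periodic orbit there would have to cross $\{Y=0\}$ both ways, which the sign of $1-Z$ forbids when $Z=0$. You instead exhibit the strict Lyapunov function $V=\ln Z - 2\ln X$ with $\dot V=(\sigma+2)X>0$; note via \eqref{interm9} that $V$ is, up to constants, $(\sigma+2)\ln\xi$, so what you are using is the built-in monotonicity of the original space variable under the change \eqref{change2} -- a clean and, in hindsight, natural observation. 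Your two-case analysis at a putative point $p\in L$ with $X(p)>0$ (namely $Z(p)>0$ versus $Z(p)=0$) is handled correctly, and the Dulac function $B=X^{(m+3)/(m-1)}$ is the right choice for the remaining plane $\{Z=0\}$: your exponent indeed cancels the $Y$-term, leaving
\begin{equation*}
\operatorname{div}(BF)=-\left(\frac{m+3}{m-1}+N+1\right)X^{(m+3)/(m-1)+1}<0 \quad\text{for } X>0,
\end{equation*}
which rules out periodic orbits in that half-plane. What this buys is a self-contained proof that does not rely on the technically heavier profile-oscillation argument of Step~6. One small imprecision: it is not true that every trajectory of \eqref{syst.Z0} in $\{X>0,Z=0\}$ converges to $P_1$ or $P_2$ (some escape to infinity, e.g.\ toward $Q_3$), but this does not affect your conclusion -- all that Poincar\'e--Bendixson requires, once periodic orbits are eliminated, is the absence of a heteroclinic graphic, and Section~\ref{sec.Z0} provides this since $P_0$ and $Q_1$ connect to the sink $P_2$ while the unique orbit entering $P_1$ comes from $Q_5$ at infinity, so no cycle among the saddles can close up.
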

\begin{proof}
We derive from Step 6 in the proof of Theorem \ref{th.exist.N4} in Section \ref{sec.exist} and from \cite[Theorem 2, Section 3.2]{Pe} that all the $\omega$-limit sets are included in either the invariant plane $\{X=0\}$ or the invariant plane $\{Z=0\}$ (otherwise, since the $\omega$-limit is itself an invariant set, going backwards on it, its $\alpha$-limit should be a compact set inside the phase space and we have shown that this is not possible). But it cannot belong to the plane $\{Z=0\}$ since a limit cycle must cross infinitely many times the plane $\{Y=0\}$, and this can be done only at points with $Z>1$.
\end{proof}
We are now in a position to classify the orbits entering $P_1$ in the phase space for $\sigma\in(0,\sigma_c)$ and prove the existence part in Theorem \ref{th.class.N4}. Due to its length, the proof will be again divided into several steps
\begin{proof}[Proof of Theorem \ref{th.class.N4}, existence part.]
\noindent \textbf{Step 1. Orbits from $P_2$ for $\sigma$ close to zero.} We recall the surface \eqref{surface}, that is fundamental for our analysis. In the initial variables $(X,Y,Z)$ it writes
\begin{equation}\label{surface2}
Z(X,Y)=-\frac{(\sigma+2)(2N+\sigma-2)}{8m}X^2-\frac{2N+\sigma-2}{2}XY-mY^2+\frac{2m}{m+1}.
\end{equation}
The aim of this first step of the proof is to prove that for $\sigma>0$ sufficiently small, the orbit going out of $P_2$ will connect to the critical point $P_3$ which behaves like an attractor. To this end, we begin by recalling that the direction of the flow of the system \eqref{PSsyst} on the surface \eqref{surface2} is given by the sign of the expression $F(X)$ in \eqref{flow.surface}, which is positive provided that
\begin{equation}\label{interm20}
X^2<X_0^2(m,N,\sigma)=-\frac{8mK_1(\sigma)}{(2N-\sigma-6)(2N+\sigma-2)(\sigma+2)(m+1)}>0.
\end{equation}
Notice here that dimension $N\geq4$ already plays an important role: the fact that $2N-\sigma-6>0$ which holds true for $0<\sigma<2(N-3)$ allows for the previous estimate. We next observe that $X(P_2)<X_0(m,N,\sigma)$ for $\sigma>0$ small, where $X_0(m,N,\sigma)$ is defined in \eqref{interm20}. Indeed, for $\sigma=0$ we have
$$
X_0(m,N,0)^2-X(P_2)^2=\frac{(3m-1)(m-1)(mN-N+m+3)}{2(mN-N+2)(m+1)(N-3)}>0,
$$
and this holds true also for $\sigma>0$ sufficiently small by continuity. Notice again the importance of the fact that $N-3>0$ in the previous estimate. Moreover, by replacing $X=X(P_2)$ and $Y=Y(P_2)$ in the expression of the surface \eqref{surface2} we get
\begin{equation}\label{surface.P2}
\begin{split}
Z(X(P_2),Y(P_2))&=\frac{m-1}{4m(mN-N+2)}\left[\frac{(3m-1)(m-1)(N-1)}{m+1}\right.\\&\left.-\frac{mN+4m-N}{2}\sigma
-\frac{m-1}{4}\sigma^2\right]>0,
\end{split}
\end{equation}
provided $\sigma>0$ is sufficiently small. Since $Z(P_2)=0$, it follows that the critical point $P_2$ lies below the surface \eqref{surface2} and inside the region where the flow on the surface satisfies $F(X)\geq0$.

\medskip

\noindent \textbf{Step 2. The unique orbit from $P_2$ goes to $P_3$ for $\sigma>0$ small.} This step is very technical and some calculations have been performed with the aid of a symbolic calculation software. Consider first the plane $\{Y=2X/(m-1)\}$ passing through the point $P_2$. The intersection of this plane with the surface \eqref{surface2} is given by the parabola
\begin{equation}\label{interm21}
Z=\frac{2m}{m+1}-\frac{R(\sigma)}{8m(m-1)^2}X^2,
\end{equation}
where
$$
R(\sigma)=(m-1)^2\sigma^2+2(m-1)(mN-4m-N)\sigma+4(m-1)(5m-1)N+4(3m^2+6m-1)>0,
$$
thus it is a parabola with a negative dominant coefficient and it intersects the plane $\{Z=0\}$ at
$$
X=X_1(m,N,\sigma), \qquad X_1(m,N,\sigma)^2=\frac{16m^2(m-1)^2}{(m+1)Q(\sigma)}.
$$
We next notice that $X_1(m,N,\sigma)<X_0(m,N,\sigma)$ for $\sigma>0$ sufficiently small. Indeed, we calculate
\begin{equation*}
\begin{split}
Z_0(\sigma)&=\frac{2m}{m+1}-\frac{Q(\sigma)}{8m(m-1)^2}X_0(m,N,\sigma)\\
&=\frac{(2mN+(m-1)\sigma+6m-2N+2)L(\sigma)}{(m-1)^2(m+1)(2N-6-\sigma)(2N+\sigma-2)(\sigma+2)},
\end{split}
\end{equation*}
with
$$
L(\sigma)=(m^2-1)\sigma^2+2(m+1)(mN+4m-N)\sigma-4(m-1)(3m-1)(N-1)<0
$$
provided $\sigma>0$ is sufficiently small, which implies also that $Z_0(\sigma)<0$ for $\sigma$ sufficiently small. Thus $X_1(m,N,\sigma)<X_0(m,N,\sigma)$ since the parabola \eqref{interm21} is decreasing with respect to $X$. We finally consider the plane $\{X=X_1(m,N,\sigma)\}$. It is obvious from geometrical reasons that $X_1(m,N,\sigma)>X(P_2)$ for $\sigma>0$ sufficiently close to zero, and the flow on this plane is given by the sign of the expression $(m-1)Y/2-X_1(m,N,\sigma)$ which is negative in the region where $Y<2X/(m-1)$. Moreover, the intersection of the plane $\{X=X_1(m,N,\sigma)\}$ with the surface \eqref{surface2} is a parabola having a positive maximum, namely
\begin{equation*}
Z=\frac{16m^2((m-1)\sigma+2mN+2m-2N+2)}{(m+1)L(\sigma)}-\frac{2N+\sigma-2}{2}X_1(m,N,\sigma)Y-mY^2.
\end{equation*}
We have thus built a compact region limited by the planes $\{X=0\}$, $\{Z=0\}$, $\{X=X_1(m,N,\sigma)\}$ and the surface \eqref{surface2}, having the critical point $P_2$ inside for $\sigma>0$ sufficiently small and from where no orbit can escape through its walls. Thus, since the orbits entering $P_1$ do that outside the surface \eqref{surface2}, the orbit going out of $P_2$ cannot connect to either $P_1$ or $Q_3$ for $\sigma>0$ sufficiently small, hence the analysis performed in Lemma \ref{lem.P3} and Proposition \ref{prop.cycles} shows that the orbit going out of $P_2$ must enter the critical point $P_3$.

\medskip

\noindent \textbf{Step 3. Orbits for $\sigma=\sigma_c$.} The goal of this step is to show that for $\sigma=\sigma_c$ the unique orbit going out of $P_2$ connects to the stable node $Q_3$. To this end, we notice first that for $\sigma=\sigma_c$ the critical point $P_2$ lies "above" the surface \eqref{surface2}. Indeed, if we replace $X=X(P_2)$, $Y=Y(P_2)$ and $\sigma=\sigma_c$ in \eqref{surface2} we get
$$
Z(X(P_2),Y(P_2))=-\frac{(N-1)(m-1)^2[m(N-4)+N-2]}{(mN-N+2)(3m+1)^2(m+1)}<0,
$$
since we are with $N\geq4$. We next prove that the orbits entering $P_1$ on the two-dimensional stable manifold do that "below" the surface \eqref{surface2}. To this end, for $\sigma=\sigma_c$ we approximate the stable manifold of $P_1$ by its Taylor development up to order three, a fact justified again by the theory in \cite[Section 2.7, p. 79-82]{Sh1}, to find that
$$
Z_1(X,Y)=AX+BY+CX^2+DH^2+EXH+FX^3+o(|(X,Y)^3|), \qquad H=Y+h_0,
$$
where
\begin{equation}\label{coef.P1}
\begin{split}
&A=\frac{4m(N-1)h_0}{3m+1}, \ B=2mh_0, \ C=\frac{2(mN+2m-N+2)(N-1)}{(3m+1)^2}, \ D=-m,\\
&E=-\frac{4(N-1)m}{3m+1}, \ F=-\frac{4(N-1)(mN+2m-N+2)[m(N-4)+N-2]\sqrt{2(m+1)}}{(5m-1)(3m+1)^2},
\end{split}
\end{equation}
and we just notice that the first five coefficients are exactly equal to the same ones of the surface \eqref{surface} (which is the same as \eqref{surface2} written in variables $(X,H,Z)$). Thus, we have
\begin{equation}\label{dif.P1bis}
Z_1(X,Y)-Z(X,Y)=FX^3+o(|(X,Y)^3|)<0, \qquad {\rm since} \ F<0 \ {\rm for} \ N\geq4.
\end{equation}
We deduce that the stable manifold of $P_1$ lies below the surface $Z(X,Y)$ in \eqref{surface2} in a neighborhood of $P_1$. Finally, recalling that the flow of the system \eqref{PSsyst} over the surface \eqref{surface2} is given by \eqref{flow.surface}, which becomes for $\sigma=\sigma_c$
$$
F(X)=-\frac{4(N-1)(mN+2m-N+2)[m(N-4)+N-2]}{(3m+1)^3}X^3<0,
$$
and that the direction of the normal to the surface $\overline{n}$ in \eqref{normal.surface} points in the negative direction of the $Z$ component, we conclude that no trajectory in the phase space can cross this surface from above (that is, from the region $\{Z>Z(X,Y)\}$) to below (that is, into the region $\{Z<Z(X,Y)\}$). This proves that the orbit going out of $P_2$ will never cross this surface and thus it cannot connect either to $P_1$ or to $P_3$. We then infer from the local analysis and from Lemma \ref{lem.cycles} that it cannot reach a limit cycle (since all the limit cycles in \eqref{cycles} with $K>0$ lie below the surface), thus it has to enter the unstable node $Q_3$.

\medskip

\noindent \textbf{Step 4. A new three-sets argument.} Since $Q_3$ is a node, it follows by standard continuity that there exists $\sigma^0<\sigma_c$ such that the orbit going out of $P_2$ enters $Q_3$ for any $\sigma\in(\sigma^0,\sigma_c)$. Moreover, we have proved that there exists $\sigma_0>0$ such that the orbit going out of $P_2$ enters $P_3$ for any $\sigma\in(0,\sigma_0)$. We can thus define the three sets
\begin{equation*}
\begin{split}
&A=\{\sigma\in(0,\sigma_c): {\rm the \ orbit \ going \ out \ of} \ P_2 \ {\rm enters} \ P_3\}, \\
&B=\{\sigma\in(0,\sigma_c): {\rm the \ orbit \ going \ out \ of} \ P_2 \ {\rm enters} \ P_1\}, \\
&C=\{\sigma\in(0,\sigma_c): {\rm the \ orbit \ going \ out \ of} \ P_2 \ {\rm enters} \ Q_3\}.
\end{split}
\end{equation*}
and conclude that $A$ and $C$ are open sets (the former follows from the attractor-like behavior of $P_3$ established in Lemma \ref{lem.P3} and the latter since $Q_3$ is an attractor) and $(0,\sigma_0)\subseteq A$, $(\sigma^0,\sigma_c)\subseteq C$. We infer that the set $B$ is non-empty and closed and it contains at least a point, for which there is a good profile with interface contained in the orbit going out of $P_2$ and satisfying assumption (P2) in Definition \ref{def1}. Moreover, since $A$ and $C$ are open, at least their limit points belong to $B$, thus $\sigma_0\in B$ and $\sigma^0\in B$, completing the proof of the third statement in Theorem \ref{th.class.N4}.

\medskip

\noindent \textbf{Step 5. Connections to $P_1$ from $P_0$ and $Q_1$.} For the range $\sigma\in(0,\sigma_0)$ for which the construction done in Steps 1 and 2 applies, it is easy to see that the same argument limits all the orbits going out of $P_0$, as they have to enter necessarily the same compact region which is used to limit the orbits going out of $P_2$, thus all the orbits from $P_0$ connect to $P_3$. We infer by elimination that the orbits with good profiles entering $P_1$ according to Theorem \ref{th.exist.N4} must come from $Q_1$ and thus contain profiles that satisfy assumption (P1) in Definition \ref{def1}.

Passing to $\sigma<\sigma_c$ but close to $\sigma_c$, we claim that \emph{no orbit coming out of $Q_1$ can connect to $P_1$}. To this end, we restrict ourselves to the case $\sigma=\sigma_c$ and we show that the whole two-dimensional surface coming out of $Q_1$ lies at a strictly positive distance from $P_1$. By passing to the limit as $X\to\infty$ and $Y\to0$ in the equation of the surface \eqref{surface2} we obviously get that $Q_1$ lies above the surface \eqref{surface2}, thus all the orbits going out of $Q_1$ connect to $Q_3$ for $\sigma=\sigma_c$, and we work on the (compact, if seen in the Poincar\'e hypersphere) two-dimensional invariant manifold $M$ formed by all the orbits going out of $Q_1$ with the flow given by the dynamical system on it, which also gives an orientation to the manifold. This manifold is limited by two separatrices, one being included in the Poincar\'e hypersphere connecting $Q_1$ to $Q_5$ and then $Q_3$ and the other being formed by the orbit connecting $Q_1$ to $P_2$ inside the plane $\{Z=0\}$ (constructed in Lemma \ref{lem.P0Q1Z0}) continued with the orbit going out of $P_2$ and entering $Q_3$. If the critical point $P_1$ is a limit of some sequence of points on this manifold, it means that in any neighborhood of $P_1$ a saddle sector is formed by the orbits in $M$, whence $P_1\in M$ and a new separatrix connecting $Q_1$ to $P_1$ must exist inside the manifold $M$. But such separatrix does not exist for $\sigma=\sigma_c$ and we reach a contradiction. It thus follows that the orbits going out of $Q_1$ stay at a positive distance from $P_1$ for $\sigma=\sigma_c$, and by continuity and a standard argument of tubular neighborhoods it readily follows that the same holds true in a small interval $\sigma\in(\sigma_2,\sigma_c)$ for some $\sigma_2\in(0,\sigma_c)$. Taking into account that there exists a orbit entering $P_1$ and containing good profiles, by elimination this orbit can come only from $P_0$, as claimed, for any $\sigma>0$ sufficiently close to $\sigma_c$.
\end{proof}
We show in Figure \ref{fig2} a picture of the phase space in both cases discussed in the classification done before: when $\sigma>0$ is small and when $\sigma<\sigma_c$ is very close to the critical exponent (for the data of the experiments, $N=4$ and $m=2$, we have $\sigma_c=6/7$). Notice the number of oscillations the orbits going out of $P_0$ do, before entering $P_3$. 

\begin{figure}[ht!]
  \begin{center}
  \subfigure[$\sigma>0$ small]{\includegraphics[width=7.5cm,height=6cm]{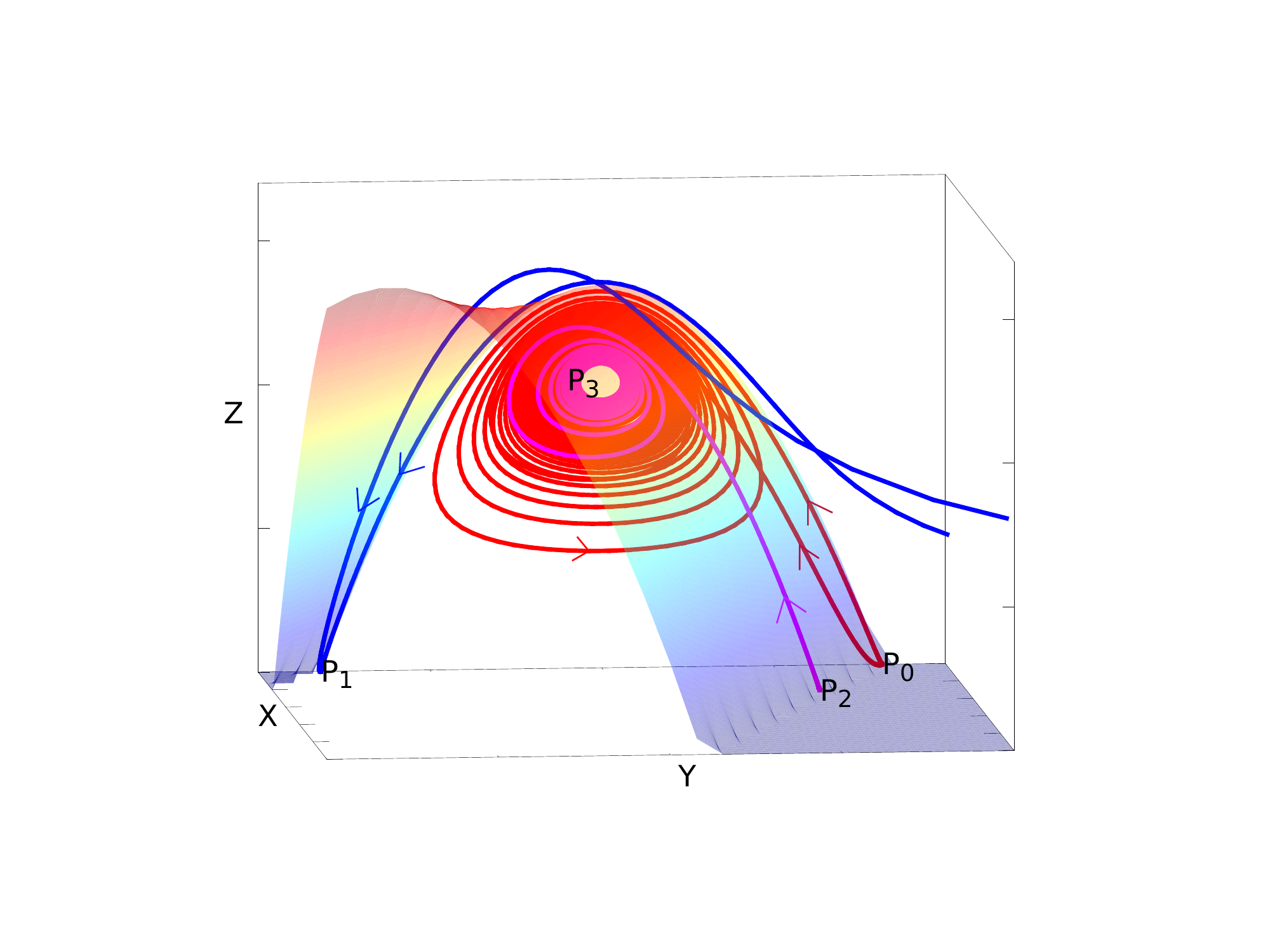}}
  \subfigure[$\sigma<\sigma_c$, $\sigma\sim\sigma_c$]{\includegraphics[width=7.5cm,height=6cm]{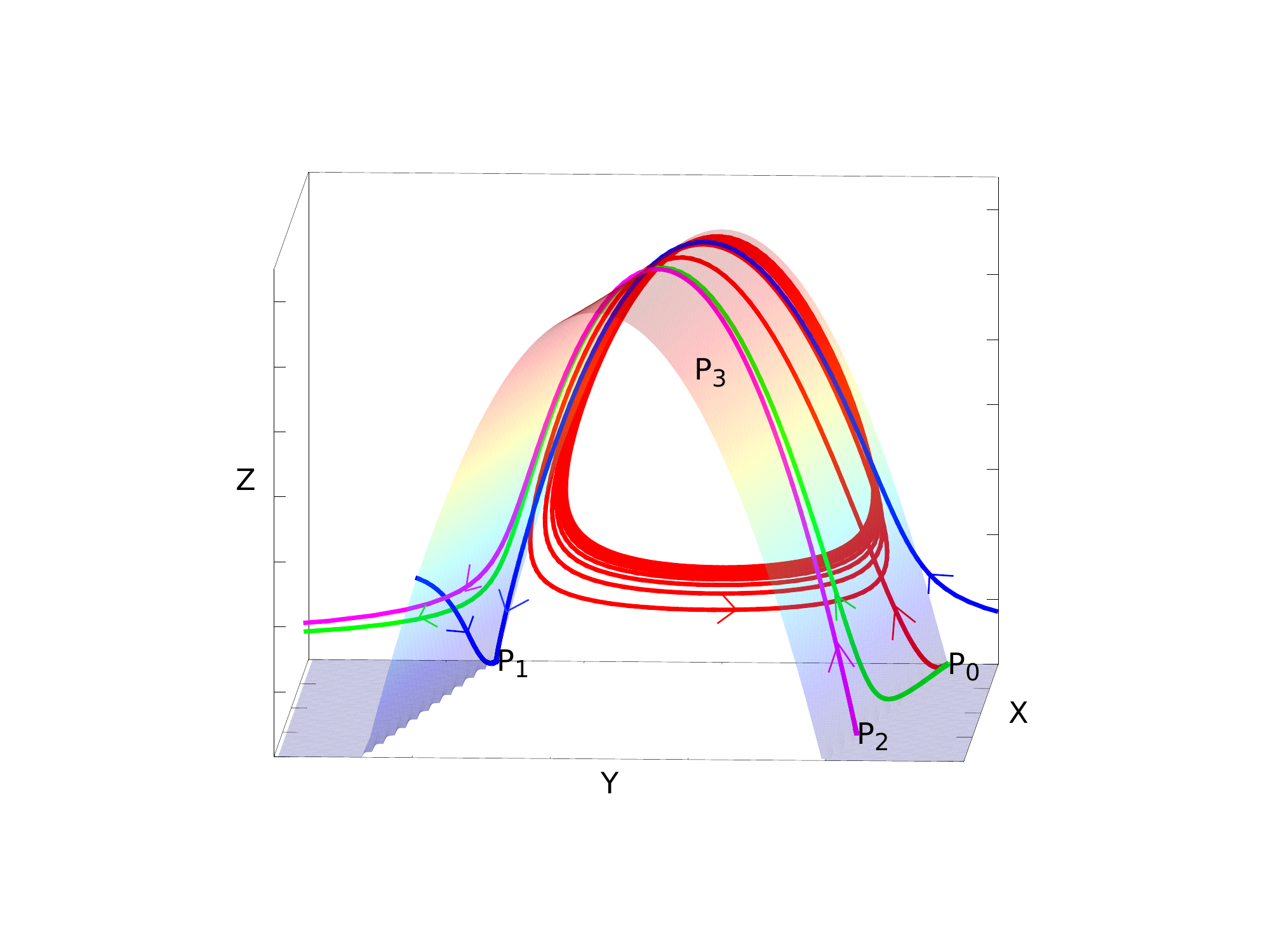}}
  \end{center}
  \caption{Trajectories in the phase space for different values of $\sigma\in(0,\sigma_c)$. Numerical experiment for $m=2$, $N=4$ and $\sigma=0.5$, respectively $\sigma=0.84$}\label{fig2}
\end{figure}

\noindent \textbf{Remarks.} 1. In fact, one can notice that the manifold $M$ associated with its natural flow inherited from the system \eqref{PSsyst} is a \emph{parallel manifold} of strip form, according to the theory in \cite{Ne75}, and for such manifolds Markus-Neumann's Theorem states in particular that they are topologically equivalent to regions of the plane having the same separatrix configuration \cite{Ne76} (see also \cite[Section 3.11]{Pe}). Since the two \emph{limit-separatrices} of $M$ both connect to $Q_3$ for any $\sigma\in(0,\sigma_c)$ for which the orbit going out of $P_2$ connects to $Q_3$, this fact strongly suggests that \textbf{all} the orbits going out of $Q_1$ will enter $Q_3$ whenever the unique orbit going out of $P_2$ enters $Q_3$, and thus the outcome of Step 5 can be extended to the whole interval $\sigma\in(\sigma^0,\sigma_c)$ for which $P_2$ connects to $Q_3$. However, this is not a full proof of this fact.

\medskip

2. The fact that $m(N-4)+N-2>0$ appeared in a decisive way in some steps of the proof of Theorem \ref{th.class.N4}. We see again here that, if we allow fractional dimensions, the branching point for the classification is given by $N^*=(4m+2)/(m+1)$, a fact noticed in the Introduction in \eqref{critN} with the mapping \eqref{transf}.

We complete the proof of Theorem \ref{th.exist.N4} with its non-existence part, that we state here as a separate result for simplicity.
\begin{proposition}\label{prop.nonexist}
For $N\geq4$ and $\sigma\in[\sigma_c,2(N-3)]$, there are no good profile with interface solutions to Eq. \eqref{ODE}.
\end{proposition}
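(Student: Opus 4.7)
The plan is to extend the separatrix-surface argument used at $\sigma=\sigma_c$ in Step 3 of the proof of Theorem \ref{th.class.N4} to the whole interval $\sigma\in[\sigma_c,2(N-3)]$. The first ingredient is to notice that for every $\sigma$ in this range and every $X>0$ the flow function $F(X)$ in \eqref{flow.surface} satisfies $F(X)\leq 0$: indeed, $K_1(\sigma)\geq 0$ because $\sigma\geq\sigma_c$, and $(2N-\sigma-6)(2N+\sigma-2)(\sigma+2)\geq 0$ because $\sigma\leq 2(N-3)$. At the endpoints of the interval one of the two contributions is strictly negative (the cubic at $\sigma=\sigma_c$, because $2N-\sigma_c-6>0$ for $N\geq 4$; the linear at $\sigma=2(N-3)$, because $K_1(2(N-3))>0$ as $2(N-3)>\sigma_c$ for $N\geq 4$), so in fact $F(X)<0$ for every $X>0$. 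Since the normal \eqref{normal.surface} points into the half-space $\{Z<Z(X,Y)\}$, this guarantees that trajectories of \eqref{PSsyst} can cross the separatrix surface \eqref{surface2} only from below to above.

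I would next locate $Q_1$, $P_0$ and $P_2$ (the three critical points from which good profiles emanate) and the target point $P_1$ relative to this surface. Both $P_0$ and $P_1$ lie on the surface; the expression \eqref{surface.P2} is a downward-opening quadratic in $\sigma$ that is already strictly negative at $\sigma=\sigma_c$, so $P_2$ lies strictly above the surface throughout the range; and along any approach to $Q_1$ the surface satisfies $Z(X,Y)\sim -\frac{(\sigma+2)(2N+\sigma-2)}{8m}X^2\to-\infty$ while the outgoing orbits have $Y/X\to 0$ and $Z\to 0$, placing $Q_1$ and its outgoing orbits above the surface for $X$ large. The local structure of the invariant manifolds at $P_0$ and $P_1$ is then governed by the Taylor approximations \eqref{dif.P0} and \eqref{dif.P1}: for $\sigma>\sigma_c$, where $K_1(\sigma)>0$, these give $Z_0-Z>0$ close to $P_0$ and $Z_1-Z<0$ close to $P_1$, hence the unstable manifold of $P_0$ leaves into $\{Z>Z(X,Y)\}$ and the stable manifold of $P_1$ enters from $\{Z<Z(X,Y)\}$. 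At the boundary $\sigma=\sigma_c$ both second-order differences vanish and one must pass to the third-order expansion already carried out in \eqref{dif.P1bis} for $P_1$, together with its obvious analogue for $P_0$; both carry the correct sign exactly because $m(N-4)+N-2>0$ when $N\geq 4$.

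Assembling these pieces closes the proof. Any forward trajectory emanating from $Q_1$, $P_0$ or $P_2$ begins in the region $\{Z>Z(X,Y)\}$ and, by the flow direction established in the first paragraph, is confined to that region for all later times. On the other hand, every orbit entering $P_1$ on its two-dimensional stable manifold must sit in $\{Z<Z(X,Y)\}$ in a small enough neighborhood of $P_1$. Consequently, no connection $Q_1\to P_1$, $P_0\to P_1$ or $P_2\to P_1$ can exist, which rules out good profiles with interface of types (P1), (P3) and (P2) respectively. Since Definition \ref{def1} exhausts the admissible initial behaviors, there are no good profiles with interface in the stated range of $\sigma$.

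The main obstacle I anticipate is the borderline value $\sigma=\sigma_c$: here the clean quadratic control near $P_0$ and $P_1$ is lost and one has to produce and handle the full cubic expansions of the two manifolds, which is exactly where the hypothesis $N\geq 4$ enters through the positivity of $m(N-4)+N-2$ (equivalently, $N>N^{*}$ in the language of \eqref{critN}). The other endpoint $\sigma=2(N-3)$ is technically softer, since the quadratic control of the manifolds still applies and only the cubic term in $F(X)$ degenerates, while the linear term $-K_1(\sigma)X/(2(m+1))$ keeps the flow sign pinned below zero.
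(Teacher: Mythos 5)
Your proposal is correct and follows essentially the same separatrix-surface argument as the paper: you verify $F(X)<0$ throughout $[\sigma_c,2(N-3)]$, locate $P_0$, $P_2$, $Q_1$ and the invariant manifolds of $P_0$ and $P_1$ relative to the surface \eqref{surface2} via the quadratic differences \eqref{dif.P0}--\eqref{dif.P1} (and the cubic ones at $\sigma=\sigma_c$), and conclude by the one-way crossing property. The paper writes out the $P_0$ analogue of \eqref{dif.P1bis} explicitly, and you supply a slightly more explicit endpoint check of the strict sign of $F$, but the route is the same.
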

\begin{proof}
This is now rather easy in view of the ideas and calculations used in the proof of Theorem \ref{th.class.N4}. The plan is to prove that for $\sigma\geq\sigma_c$ and $N\geq4$ the orbits coming from $P_0$, $P_2$ or $Q_1$ cannot enter $P_1$; in fact, they will all connect to $Q_3$, while as an outcome of the proof, all the two-dimensional manifold entering $P_1$ will come from the unstable node $Q_5$ (always with the convention of identification between $Q_5$ and $Q_4$ introduced after Lemma \ref{lem.Q4Q5}). First of all, for $\sigma>\sigma_c$, recalling the local approximation $Z_1(X,H)\equiv Z_1(X,Y)$ of the manifold entering $P_1$ given by \eqref{manif.P1}, we readily get from \eqref{dif.P1} and the fact that $K_1(\sigma)>0$ that
$$
Z_1(X,Y)-Z(X,Y)<0,
$$
where $Z(X,Y)$ is the expression of the surface \eqref{surface2}, thus for any $\sigma\geq\sigma_c$ we infer that the orbits entering $P_1$ do that from "below" the surface \eqref{surface2} (the exact equality case $\sigma=\sigma_c$ being considered in \eqref{dif.P1bis}). With respect to the point $P_0$, for $\sigma=\sigma_c$ we can again approximate the two-dimensional manifold going out of it up to order three as we did for the manifold entering $P_1$ and get
$$
Z_0(X,Y)=-AX-BY+CX^2+DH^2+EXH-FX^3+o(|(X,Y)^3|), \qquad H=Y-h_0,
$$
with coefficients $A$ to $F$ given in \eqref{coef.P1}, noticing thus that
\begin{equation}\label{dif.P0bis}
Z_0(X,Y)-Z(X,Y)=-FX^3+o(|(X,Y)^3|)>0, \qquad {\rm since} \ F<0 \ {\rm for} \ N\geq4.
\end{equation}
We conclude that the orbit going out of $P_0$ does this above the surface \eqref{surface2} for $\sigma=\sigma_c$ and it is easy to check that the same occurs for any $\sigma>\sigma_c$ using only the approximation up to order two in \eqref{manif.P0} and the sign in \eqref{dif.P0}. With respect to the point $P_2$, we obtain from the calculation in \eqref{surface.P2} that for $\sigma=\sigma_c$ we have
$$
Z(X(P_2),Y(P_2))=-\frac{(m-1)^3(N-1)[m(N-4)+N-2]}{(mN-N+2)(m+1)(3m+1)^2}<0,
$$
thus the critical point $P_2$ having $Z(P_2)=0$ lies above the surface \eqref{surface2} for any $\sigma>\sigma_c$, since the expression \eqref{surface.P2} is decreasing with respect to $\sigma$. It remains to show that the orbits going out of $Q_1$ also lie above this surface. The profiles going out of $Q_1$ satisfy, according to Lemma \ref{lem.Q1}, $f(0)=A>0$ and $f'(0)=0$, thus in terms of the phase space variables have $X=+\infty$, $Y=Z=0$. Letting thus points of the form $(x_0,0,0)$ with $x_0$ very large and replacing in the formula of \eqref{surface2} we get
$$
Z(x_0,0)=\frac{2m}{m+1}-\frac{(\sigma+2)(2N+\sigma-2)}{8m}x_0^2<0,
$$
thus these points approximating $Q_1$ lie above the surface \eqref{surface2} and passing to the limit $x_0\to+\infty$, the same occurs for $Q_1$. It is then easy to notice that for $\sigma\in[\sigma_c,2(N-3)]$ the sign of \eqref{flow.surface} is always negative, thus the surface \eqref{surface2} cannot be crossed by any trajectory of the system from above to below (or rigorously from the region $\{Z>Z(X,Y)\}$ into the region $\{Z<Z(X,Y)\}$). Hence, no orbit from either $P_0$, $P_2$ or $Q_1$ can enter $P_1$ for $\sigma\in[\sigma_c,2(N-3)]$, as claimed.
\end{proof}

\section{Global analysis in dimensions $N=2$ and $N=3$}\label{sec.N23}

Let us take now $N=2$ or $N=3$. Our goal is to prove Theorem \ref{th.exist.N23}. This is an immediate consequence of the following
\begin{proposition}\label{prop.existN23}
For any $\sigma\in(0,\sigma_c]$, there exist good orbits entering $P_1$. All these orbits come from the critical point $Q_1$.
\end{proposition}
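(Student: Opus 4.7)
The plan is to combine the three-sets argument from the proof of Theorem \ref{th.exist.N4} (which produces existence of good orbits entering $P_1$) with the separatrix surface \eqref{surface2} (which constrains from which critical points these orbits may originate). The essential new feature in the regime $N\in\{2,3\}$ is that several signs reverse with respect to the $N\geq4$ analysis: here $2N-\sigma-6<0$ and $m(N-4)+N-2<0$, and this consistently places $P_0$, $P_2$ and their relevant invariant manifolds on the side of the surface \eqref{surface2} from which $P_1$ cannot be reached.

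For the existence part, when $\sigma\in(0,\sigma_c)$ I would invoke the remark following the proof of Theorem \ref{th.exist.N4}, whose three-sets argument is dimension-independent. For the endpoint $\sigma=\sigma_c$ I would rerun the same three-sets construction but replace the second-order comparison \eqref{dif.P1} near $P_1$ (which becomes trivial because $K_1(\sigma_c)=0$) by the third-order expansion \eqref{dif.P1bis}; since for $N\in\{2,3\}$ the coefficient $F$ in \eqref{coef.P1} is positive, we still get $Z_1(X,Y)-Z(X,Y)>0$ locally at $P_1$, which suffices to separate the orbits coming from $Q_2$ and $Q_5$ across the surface. Because Steps~1 and 4 in the proof of Theorem \ref{th.exist.N4} depend neither on $\sigma$ nor on $N$, the sets $A$ (orbits from $Q_2$) and $C$ (orbits from $Q_5$) remain open and non-empty, so the closed set $B$ is non-empty and supplies good profiles with interface.

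For the classification, I would first observe that for $N\in\{2,3\}$ and every $\sigma\in(0,\sigma_c]$ the surface \eqref{surface2} is a one-way barrier: in \eqref{flow.surface} the linear coefficient $-K_1(\sigma)/(2(m+1))\geq 0$ (vanishing only at $\sigma_c$) and the cubic coefficient is strictly positive because $2N-\sigma-6<0$, hence $F(X)>0$ for all $X>0$, while the normal \eqref{normal.surface} points into $\{Z<Z(X,Y)\}$. Next I would verify that $P_0$ and $P_2$ lie strictly below this surface: for $P_0$ this follows from \eqref{dif.P0} when $\sigma<\sigma_c$ and from the third-order analogue at $\sigma=\sigma_c$, where again $F>0$; for $P_2$ it follows from \eqref{surface.P2}, which at $\sigma=\sigma_c$ yields a strictly positive number since $m(N-4)+N-2<0$, and the concavity in $\sigma$ of the bracketed quadratic in \eqref{surface.P2} then propagates positivity to the whole interval $(0,\sigma_c]$. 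Consequently, no trajectory originating at $P_0$ or $P_2$ can cross the surface upward to reach the stable manifold of $P_1$, which lies in $\{Z>Z(X,Y)\}$ near $P_1$.

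The only remaining candidates for the origin of the orbits in $B$ are then $Q_1$ and compact $\alpha$-limit sets; the latter are ruled out at every $\sigma\in(0,\sigma_c]$ by the oscillation argument of Step~6 in the proof of Theorem \ref{th.exist.N4} together with Lemma \ref{lem.cycles}, independently of whether $\sigma=\sigma_c$. Hence every good orbit entering $P_1$ must come from $Q_1$, and it contains a profile satisfying assumption (P1) of Definition \ref{def1} via Lemma \ref{lem.Q1} when $N=3$ and via Lemma \ref{lem.Q1Q5N2} when $N=2$. The main technical delicacy I anticipate is at $\sigma=\sigma_c$, where the second-order approximations of the invariant manifolds of $P_0$ and $P_1$ coincide with the surface \eqref{surface2}, forcing systematic use of the third-order corrections, and where Proposition \ref{prop.cycles} does not apply directly to rule out periodic orbits of the form \eqref{cycles}; however, the ODE-based oscillation argument of Step~6 in the proof of Theorem \ref{th.exist.N4} bypasses this issue.
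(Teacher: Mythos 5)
Your proposal is correct and follows essentially the same route as the paper's own proof: it uses the separatrix surface \eqref{surface2} as a one-way barrier (now valid for all $X>0$ because $2N-\sigma-6<0$ when $N\in\{2,3\}$), the sign reversal of the cubic coefficient $F$ in \eqref{coef.P1} (driven by $m(N-4)+N-2<0$) to handle the endpoint $\sigma=\sigma_c$ via the third-order comparison, and the sign of $Z(X(P_2),Y(P_2))$ in \eqref{surface.P2} to place $P_2$ below the surface, finishing by elimination against $P_0$, $P_2$, and compact $\alpha$-limits exactly as in Step~6 of Section~\ref{sec.exist}. The only cosmetic difference is that you deduce positivity of the bracket in \eqref{surface.P2} on $(0,\sigma_c]$ from concavity plus positivity at the two endpoints, whereas the paper observes that the expression is strictly decreasing in $\sigma$ and positive at $\sigma_c$; both are valid and equivalent here.
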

\begin{proof}
Recalling the surface \eqref{surface2} and the flow of the system \eqref{PSsyst} on it, given by the function $F(X)$ defined in \eqref{flow.surface}, the fundamental remark is that the flow $F(X)$ \textbf{is positive} for any $\sigma\in(0,\sigma_c]$. This is due to the fact that for $N\in\{2,3\}$ we have $2N-\sigma-6<0$ for any $\sigma>0$. We thus deduce that the trajectories of the system \eqref{PSsyst} cannot cross the surface \eqref{surface2} from the interior (that is, from the region $\{Z<Z(X,Y)\}$ towards the exterior (that is, the region $\{Z<Z(X,Y)\}$) of it. Let us also remark at this point that in dimensions $N\leq3$, the surface \eqref{surface2} can be also written in the form
\begin{equation}\label{parab.ell}
Z=\frac{2m}{m+1}-m\left[Y+\frac{2N+\sigma-2}{4m}X\right]^2-\frac{(2N+\sigma-2)(\sigma+6-2N)}{16m}X^2,
\end{equation}
becoming an elliptic paraboloid and we can properly speak about interior and exterior of it. A surface having similar geometric form is represented in Figure \ref{fig3} below. It is then easy to check that the proof of Theorem \ref{th.exist.N4} (done in Section \ref{sec.exist}) simply applies also to dimensions $N=2$ and $N=3$ and moreover, it can be extended also to the limit case $\sigma=\sigma_c$. Indeed, for $\sigma<\sigma_c$ the approximations to the two-dimensional manifolds of $P_0$ and $P_1$ given in \eqref{manif.P0} and \eqref{manif.P1}, and the fact that the orbits entering $P_1$ do that from the exterior of the surface \eqref{surface2} as it follows from \eqref{dif.P1}, respectively the ones going out of $P_0$ do that in the interior of the surface \eqref{surface2} as it follows from \eqref{dif.P0}, hold true. But as a significant difference with respect to dimensions $N\geq4$, the same holds true also for $\sigma=\sigma_c$. This is due to the fact that the coefficient
$$
F=-\frac{4(N-1)(mN+2m-N+2)[m(N-4)+N-2]\sqrt{2(m+1)}}{(5m-1)(3m+1)^2}
$$
appearing in \eqref{dif.P1bis} changes sign and becomes \emph{positive} for $N=2$ and $N=3$ (and in fact for any fractional dimension $N<(4m+2)/(m+1)$ if we consider $N$ as a parameter in Eq. \eqref{ODE}). This implies that also for $\sigma=\sigma_c$ the orbits entering $P_1$ remain in the exterior of the surface \eqref{surface2}, while the ones going out of $P_0$ remain in the interior (as it follows from \eqref{dif.P0bis}), and the proof of the existence of a good orbit entering $P_1$, done in Section \ref{sec.exist}, can be directly extended to $\sigma=\sigma_c$. We moreover deduce that there can be no connection $P_0-P_1$ for any $\sigma\in(0,\sigma_c]$, since the orbits are separated by the surface \eqref{surface2}. The same happens for the orbits going out of $P_2$. Indeed, we recall from \eqref{surface.P2} that $Z(X(P_2),Y(P_2))$ is strictly decreasing with $\sigma>0$ and that for $\sigma=\sigma_c$ we get
$$
Z(X(P_2),Y(P_2))=-\frac{(N-1)(m-1)^2[m(N-4)+N-2]}{(mN-N+2)(3m+1)^2(m+1)}>0,
$$
since now $m(N-4)+N-2<0$. This important change implies that $P_2$ is in the interior region of the surface \eqref{surface2} and thus the orbit going out of it cannot cross the surface in order to enter $P_1$. By elimination, it follows that all the good orbits entering $P_1$, for any $\sigma\in(0,\sigma_c]$, come from $Q_1$.
\end{proof}
As a remark, the proof of existence of a good orbit entering $P_1$ can be extended above $\sigma=\sigma_c$. This follows from the fact that, for $\sigma=\sigma_c$ there exist still orbits entering $P_1$ and coming from the unstable nodes $Q_2$ and $Q_5$, as it comes out by an inspection of the proof of Theorem \ref{th.exist.N4} (see Section \ref{sec.exist}) together with the adaptations for $\sigma=\sigma_c$ explained above. Thus, we can extend the existence of such connections by standard continuity up to some $\sigma_2>\sigma_c$, and the three sets argument proving that there exists at least a good orbit (Step 5 in Section \ref{sec.exist}) is still available for any $\sigma\in(\sigma_c,\sigma_2)$. 

\section{Non-existence of good blow-up profiles for $\sigma$ large}\label{sec.nonexist}

This final section is devoted to the proof of Theorem \ref{th.nonexist}. To show that no good profiles with interface exist, we have to rule out, for $\sigma$ sufficiently large, connections $P_2-P_1$, $Q_1-P_1$ and $P_0-P_1$. The approach will be similar for all these points, since we will follow, for $\sigma>0$ sufficiently large, the orbits starting from their crossing point with the plane $\{Y=0\}$ in the phase space, a plane that all of them must cross at least once. As usual, we divide the proof into several steps for simplicity.

\medskip

\begin{proof}[Proof of Theorem \ref{th.nonexist}]
\noindent \textbf{Step 1. A change of variable.} In order to simplify the presentation, since we are dealing with $\sigma$ very large, we let $U=\sigma X$ and we transform the system \eqref{PSsyst} into
\begin{equation}\label{PSsyst4}
\left\{\begin{array}{ll}\dot{U}=\frac{m-1}{2}UY-\lambda U^2,\\
\dot{Y}=-\frac{m+1}{2}Y^2+1-Z-(N-1)\lambda UY,\\
\dot{Z}=Z[(m-1)Y+U],\end{array}\right.
\end{equation}
with $\lambda=1/\sigma$. We thus have to study trajectories of the system \eqref{PSsyst4} for $\lambda$ sufficiently small. Notice that the flow of the system \eqref{PSsyst4} over the surface \eqref{surface2} becomes
\begin{equation}\label{flow.surface2}
F(U)=\frac{K(\lambda)}{2(m+1)}U-\frac{(2\lambda+1)(2\lambda N-2\lambda+1)(2\lambda N-6\lambda-1)}{16m}U^3,
\end{equation}
as it readily follows from \eqref{flow.surface}, where
$$
K(\lambda)=2\lambda(m-1)(N-1)-(3m+1).
$$
We remark here that for $\sigma>2(N-3)$ if $N\geq4$ or for $\sigma>0$ if $N<4$, the surface \eqref{surface2} is an elliptic paraboloid, a fact that can be seen immediately from \eqref{parab.ell} since the only thing that counts is the sign of $\sigma+6-2N$ which is now positive. We further observe that $F(U)<0$ in \eqref{flow.surface2} if $U\in(0,U_0)$, where
\begin{equation}\label{interm24}
U_0^2=\frac{8mK(\lambda)}{(m+1)(2\lambda+1)(2\lambda N-2\lambda+1)(2\lambda N-6\lambda-1)}>0.
\end{equation}
The intersection of the surface \eqref{surface2} with the plane $\{U=U_0\}$ is a parabola
\begin{equation*}
\begin{split}
Z(U_0,Y)&=\frac{2m}{m+1}-mY^2-\sqrt{\frac{2m(2\lambda N-2\lambda+1)K(\lambda)}{(m+1)(2\lambda+1)(2\lambda N-6\lambda-1)}}Y\\
&-\frac{K(\lambda)}{(m+1)(2\lambda N-6\lambda-1)},
\end{split}
\end{equation*}
having a maximum equal to
$$
M(\lambda)=\frac{2\lambda mN+6\lambda m-2\lambda N+2\lambda+m-1}{2(2\lambda+1)(m+1)},
$$
and we infer that
$$
M(\lambda)-1=\frac{2\lambda(m-1)(N+1)-(m+3)}{2(2\lambda+1)(m+1)}<0, \qquad {\rm for} \ \lambda\in\left(0,\frac{m+3}{2(m-1)(N+1)}\right).
$$
It follows that the intersection between the surface \eqref{surface2} and the plane $\{U=U_0\}$ lies strictly below the plane $\{Z=1\}$ for $\lambda>0$ sufficiently small as above and, since the surface is an elliptic paraboloid, the same holds true for the surface \eqref{surface2} for any $U\geq U_0$.

\medskip

\noindent \textbf{Step 2. An orbit cannot cross the surface \eqref{surface2} in the region $\{Y>0\}$.} To prove this statement, we find that the intersection of the surface \eqref{surface2} with the plane $\{Y=0\}$ is given by
$$
Z(U,0)=\frac{2m}{m+1}-\frac{(2\lambda+1)(2\lambda N-2\lambda+1)}{8m}U^2>0
$$
provided that
$$
0<U^2<U_1^2=\frac{16m^2}{(m+1)(2\lambda+1)(2\lambda N-2\lambda+1)}.
$$
Recalling that $U_0$ is given in \eqref{interm24}, we notice that
$$
U_1^2-U_0^2=\frac{8m[2\lambda(mN-5m+N-1)+m+1]}{(m+1)(2\lambda+1)(2\lambda N-2\lambda+1)(2\lambda N-6\lambda-1)}<0
$$
for $\lambda>0$ sufficiently small, due to the sign of $2\lambda N-6\lambda-1<0$. Since the amplitude with respect to the component $U$ of the surface \eqref{surface2} for some $\{Y>0\}$ is smaller than the one attained in the intersection with the plane $\{Y=0\}$ and the latter is smaller than $U_0$ as shown above, it follows that the part of the surface included in the half-space $\{Y>0\}$ is also included in the strip $\{0\leq U\leq U_0\}$ where the direction of the flow does not allow for the surface to be crossed from the exterior towards its interior.

\medskip

\noindent \textbf{Step 3. Crossing the plane $\{Y=0\}$.} We already have seen in the proof of Proposition \ref{prop.nonexist} that the orbits going out of $P_0$, $P_2$ and $Q_1$ are above the surface \eqref{surface2} near their starting points. It then follows from Step 2 that they cannot cross the surface before reaching the plane $\{Y=0\}$. The direction of the flow on this plane is given by the sign of $1-Z$, thus the orbits have to cross the plane $\{Y=0\}$ at points with height $Z>1$. If the crossing point of such orbit with the plane $\{Y=0\}$ finds itself in the strip $\{0\leq U\leq U_0\}$, it will forever remain in this strip after crossing since the component $U$ is decreasing in the half-space $\{Y<0\}$, as we infer from the first equation in \eqref{PSsyst4}. Thus, such an orbit cannot cross the surface \eqref{surface2} in order to go to $P_1$. If an orbit intersects the plane $\{Y=0\}$ at a point with $U>U_0$ and $Z>1$, then this orbit has to cross first the plane $\{Z=1\}$ in order to cross afterwards the surface \eqref{surface2}, since we have shown in Step 1 that the region $\{U>U_0\}$ of the surface \eqref{surface2} lies below the plane $\{Z=1\}$, and this intersection with the plane $\{Z=1\}$ should still lie in the region $\{U>U_0\}$, as otherwise we are already in the strip in which the surface cannot be crossed. But in order to intersect the plane $\{Z=1\}$, the component $Z$ of the trajectory must start to decrease. This can only happen in the region where $\dot{Z}<0$, that is, $(m-1)Y+U<0$, which writes
\begin{equation}\label{interm25}
Y<-\frac{U}{m-1}<-\frac{U_0}{m-1}.
\end{equation}

\medskip

\noindent \textbf{Step 4. Point of no return.} We consider now the plane $\{Y=-U_0/(m-1)\}$ that the orbits have to cross in order to allow component $Z$ to decrease, according to \eqref{interm25}. The flow of the system \eqref{PSsyst4} on this plane is given by the sign of
$$
H(Z,U)=1-\frac{m+1}{2(m-1)^2}U_0^2-Z+\frac{(N-1)\lambda}{m-1}U_0U.
$$
Noticing that
$$
1-\frac{m+1}{2(m-1)^2}U_0^2=B(\lambda,m,N)=\frac{A(\lambda,m,N)}{(m-1)^2(2\lambda+1)(2\lambda N-2\lambda+1)(2\lambda N-6\lambda-1)},
$$
where
\begin{equation*}
\begin{split}
A(\lambda,m,N)&=8(N-1)(N-3)(m-1)^2\lambda^3+4(N^2-4N+1)(m-1)^2\lambda^2\\
&-2(m-1)(4mN-N-3)\lambda+11m^2+6m-1,
\end{split}
\end{equation*}
we get that $H(Z,U)<0$ for $\lambda$ sufficiently small since
$$
\lim\limits_{\lambda\to0}B(\lambda,m,N)=-\frac{11m^2+6m-1}{(m-1)^2}<0.
$$
We end the proof by also noticing that $-U_0/(m-1)<-h_0$ for $\lambda>0$ sufficiently small. This is obtained from the fact that
$$
\frac{U_0^2}{(m-1)^2}-h_0^2=-\frac{2}{m+1}B(\lambda,m,N)>0
$$
for $\lambda>0$ small. It then follows that once crossed, the plane $\{Y=-U_0/(m-1)\}$ cannot be crossed again by a trajectory from the left to the right, and that the point $P_1$ lies in the region $\{Y>-U_0/(m-1)\}$, thus no trajectory crossing the plane $\{Y=0\}$ can enter this point.
\end{proof}
\begin{figure}[ht!]
  \begin{center}
  \includegraphics[width=10cm,height=7.5cm]{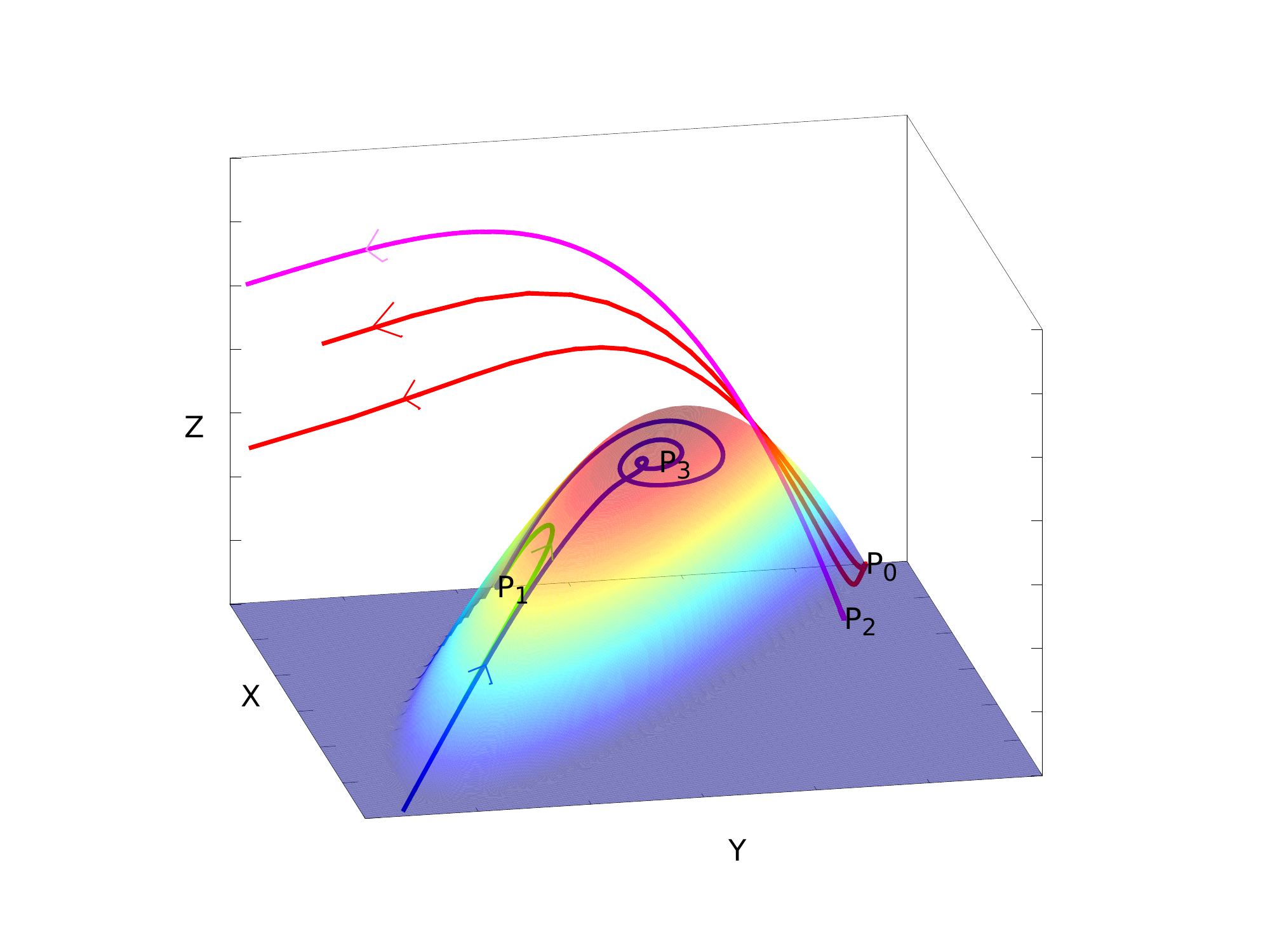}
  \end{center}
  \caption{The phase space and the separatrix surface, in form of an elliptic paraboloid, for $\sigma$ sufficiently large. Numerical experiment for $m=2$, $N=4$ and $\sigma=5$}\label{fig3}
\end{figure} 
We end this final section by plotting in Figure \ref{fig3} the phase space for $\sigma>\sigma_c$ sufficiently large. We see some orbits entering $P_1$ and coming from $Q_5$, while the orbits going out of $P_0$ and $P_2$ stay above the elliptic paraboloid and go away from $P_1$.

\bigskip

\noindent \textbf{Acknowledgements} A. S. is partially supported by the Spanish project MTM2017-87596-P.

\end{document}